\title{Marked length spectrum rigidity in nonpositive curvature with singularities}
\author{David Constantine}
\address{
Wesleyan University \\
Mathematics and Computer Science Department \\
Middletown, CT 06459}
\date{\today}
\newtheorem{thm}{Theorem}
\newtheorem{theorem}{Theorem}
\newtheorem{corollary}[theorem]{Corollary}
\newtheorem{lem}[thm]{Lemma}
\newtheorem{prop}[thm]{Proposition}
\newtheorem{cor}[thm]{Corollary}
\newtheorem{ques}[thm]{Question}
\newtheorem{defn}[thm]{Definition}
\numberwithin{equation}{section}
\numberwithin{thm}{section}
\theoremstyle{remark} \newtheorem{rem}[thm]{Remark}
\def\Pb{\ifmmode{\Bbb P}\else{$\Bbb P$}\fi}
\def\Z{\ifmmode{\Bbb Z}\else{$\Bbb Z$}\fi}
\def\Q{\ifmmode{\Bbb Q}\else{$\Bbb Q$}\fi}
\def\C{\ifmmode{\Bbb C}\else{$\Bbb C$}\fi}
\def\R{\ifmmode{\Bbb R}\else{$\Bbb R$}\fi}
\def\H{\ifmmode{\Bbb H}\else{$\Bbb H$}\fi}
\begin{document}

\begin{abstract}

Combining several previously known arguments, we prove marked length spectrum rigidity for surfaces with nonpositively curved Riemannian metrics away from a finite set of cone-type singularities with cone angles $>2\pi$. With an additional condition, we can weaken the requirement on one metric to `no conjugate points.'

\end{abstract}

\maketitle

\setcounter{secnumdepth}{1}

\setcounter{section}{0}

\section{Introduction}

Let $(M,g)$ be a compact Riemannian manifold (perhaps with a small set of singularities). For any element $[\gamma]$ of $\pi_1(M)$ there is a (not necessarily unique) shortest length (piecewise) geodesic $\gamma$ in $M$ representing $[\gamma]$. The function $l_{g}:\pi_1(M) \to \mathbb{R}$ which assigns the length of $\gamma$ to $[\gamma]$ is called the \emph{marked length spectrum} of $(M,g)$. One may then ask the following general question:

\begin{ques}[The marked length spectrum rigidity question]
To what extent does the function $l_g$ determine the geometry of $(M,g)$?
\end{ques}

\noindent This is a well-studied problem; we will note some of the the results on it below. This paper addresses it for a class of singular metrics on surfaces.

Let $S$ be a compact, connected, orientable surface. We say a metric on $S$ is a \emph{nonpositively curved cone metric} if there is a finite set of points $P\subset S$ such that on $S\setminus P$ the metric is Riemannian with nonpositive curvature, and at each point in $P$ there is a cone-type singularity with cone angle $>2\pi$.  Let $\mathscr{M}_{npc}(S)$ denote the set of nonpositively curved cone metrics on $S$. We say that a metric on $S$ is a \emph{no conjugate points cone metric} if it has a finite set of cone points $P$ with cone angles $>2\pi$ and on $S \setminus P$ the metric is Riemannian without conjugate points. We denote the set of no conjugate points cone metrics on $S$ by $\mathscr{M}_{ncpc}(S)$ and note that $\mathscr{M}_{npc}(S) \subset \mathscr{M}_{ncpc}(S)$.

A \emph{strip} is the homeomorphic image of a map from $\mathbb{R}\times [0,\epsilon]$ into $S$ where the first coordinate parametrizes unit-speed geodesics, and the lifts of these geodesics to $\tilde S$ remain at a bounded distance from one another. In non-positive curvature, such a strip is necessarily an isometric immersion -- a \emph{flat strip}. The goal of this paper is to prove the following theorem: 

\begin{theorem}\label{main thm}
Let $S$ be a surface of genus $\geq 2$. Let $g_1 \in \mathscr{M}_{ncpc}(S)$ and $g_2 \in \mathscr{M}_{npc}(S)$, and assume that $l_{g_1}=l_{g_2}$. Assume in addition that $g_1$ satisfies the following condition: 
\[ vol(\{v: v \mbox{ is tangent to a geodesic lying in a strip}\})=0 \]

\noindent where $vol$ denotes the usual volume measure on the unit tangent bundle of $S\setminus P$. Then $(S,g_1)$ is isometric to $(S,g_2)$ by an isometry isotopic to the identity.
\end{theorem}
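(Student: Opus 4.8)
The plan is to run Otal's proof of marked length spectrum rigidity for negatively curved surfaces — via the boundary at infinity and the Liouville current — while importing from the nonpositively curved / CAT(0)-with-cone-points literature what is needed to handle flat strips and cone singularities, the hypothesis on $g_1$ being used precisely to neutralize the flat-strip locus. Lift to universal covers, on which $\Gamma=\pi_1(S)$ acts. Since $g_2\in\mathscr M_{npc}(S)$, the simply connected $(\tilde S_2,\tilde g_2)$ is CAT(0) (nonpositive curvature on the smooth part, cone angles $>2\pi$), and the no-conjugate-points condition on $g_1$ together with cone angles $>2\pi$ yields a simply connected space likewise carrying a circular visual boundary $\partial\tilde S_i\cong S^1$, on which $\Gamma$ acts as a convergence action with each nontrivial $\gamma$ fixing exactly the two endpoints of its axis. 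Matching these fixed-point pairs and extending continuously produces a $\Gamma$-equivariant, orientation-preserving homeomorphism $\phi\colon\partial\tilde S_1\to\partial\tilde S_2$; it preserves the cyclic order, hence the linking relation on pairs of boundary points. (The length spectrum is not used yet; it enters through the lengths of axes below.)

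\textbf{Liouville currents and lengths.} For each $i$, disintegrating the Liouville measure of $T^1(S_i\setminus P)$ along the geodesic foliation gives a $\Gamma$-invariant transverse measure $L_i$ on the space $\mathcal G_i$ of complete geodesics of $\tilde S_i$ — the Liouville current. The natural map $\mathcal G_i\to\partial^2\tilde S_i$ collapses each flat strip to a single pair of endpoints (parallel geodesics are asymptotic), and the hypothesis on $g_1$ is exactly the statement that the set of geodesics lying in strips is $L_1$-null; hence this map is a.e. injective for $i=1$ and $L_1$, regarded as a measure on $\partial^2\tilde S_1$, loses no information about $\tilde g_1$. The technical core is Otal's length–intersection identity $i(L_i,\delta_{[\gamma]})=l_{g_i}([\gamma])$ for every $[\gamma]\in\Gamma$ (in Bonahon's normalization), $\delta_{[\gamma]}$ being the closed-geodesic current on the axis of $\gamma$; this must be re-established in the present setting, checking that geodesics meeting a cone point form a null set (using the angle condition) and that the flat-strip locus is negligible.

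\textbf{Matching the currents and rigidity.} As $\phi$ sends the axis of $\gamma$ in $\tilde S_1$ to the axis of $\gamma$ in $\tilde S_2$ and preserves linking, it preserves intersection numbers, so
\[
 i(\phi_*L_1,\delta_{[\gamma]})=i(L_1,\delta_{[\gamma]})=l_{g_1}([\gamma])=l_{g_2}([\gamma])=i(L_2,\delta_{[\gamma]})\qquad\text{for all }[\gamma].
\]
A geodesic current on a closed surface is determined by its intersection numbers with the closed geodesics, so $\phi_*L_1=L_2$; in particular $L_2$ charges only single-geodesic endpoint pairs, the flat-strip locus of $g_2$ is $L_2$-null as well, and $g_2$ is likewise faithfully encoded by its Liouville current. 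One then invokes Liouville-current rigidity: within $\mathscr M_{npc}(S)$, and — given the strip condition — within $\mathscr M_{ncpc}(S)$, a metric is recovered from its Liouville current, up to an isometry realizing the given boundary identification, since the current reconstructs the lengths of all geodesic arcs via Crofton-type integration and hence the distance function. This produces an isometry $f\colon(S,g_1)\to(S,g_2)$ inducing $\phi$; since $\phi$ is equivariant for the identity automorphism of $\Gamma$, $f$ acts trivially on $\pi_1(S)$, and as $S$ is aspherical $f$ is homotopic, and therefore isotopic, to the identity.

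\textbf{The main difficulty.} The skeleton above is formal; the work lies in making Otal's integral-geometric arguments — the length–intersection identity and the Liouville-current rigidity lemma — survive in the singular, merely no-conjugate-points setting. One must verify that the geodesic flow of a no-conjugate-points cone metric still supports the foliation and measure-disintegration structure these arguments need, that $\phi$ is regular enough for $\phi_*$ to preserve the relevant measure classes, and that the cone points (null by the angle condition) and the flat-strip locus (null for $g_1$ by hypothesis, and to be transferred to $g_2$) genuinely do not obstruct the Crofton formulas. Taming the flat-strip locus for $g_1$ is exactly what the extra hypothesis buys; pushing the arguments past the cone points is the other delicate ingredient.
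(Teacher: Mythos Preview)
Your outline is correct through the equality $\phi_*L_1=L_2$ (equivalently $L_{g_1}=L_{g_2}$ on $\mathscr G(\tilde S)$); this is Proposition~\ref{prop:preserves Liou} in the paper, and your derivation via $i(L_i,\langle\gamma\rangle)=l_{g_i}(\gamma)$ together with Otal's Theorem~\ref{thm:lengths determine liou} is exactly how it is done. The gap is in what you call ``Liouville-current rigidity'': there is no off-the-shelf result of this form for metrics in $\mathscr M_{npc}(S)$ or $\mathscr M_{ncpc}(S)$, and proving it is the entire content of Sections~\ref{sec:cone}--8 of the paper.

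Your sketch of this step --- ``the current reconstructs the lengths of all geodesic arcs via Crofton-type integration and hence the distance function'' --- hides the real difficulty. The Crofton formula (Proposition~\ref{prop:length}) computes $\mathrm{length}_g(k)$ from $L_g$ for a segment $k$, but $L_g$ lives on $\mathscr G(\tilde S)$, i.e.\ on pairs of points at infinity; it does not by itself single out interior points of $\tilde S$, so one cannot simply ``recover the distance function'' without first producing a correspondence $p\mapsto p'$ between points of $(\tilde S,\tilde g_1)$ and $(\tilde S,\tilde g_2)$. The paper builds this correspondence in three separate pieces: (i) cone points are matched via Bankovic--Leininger's $(L_g,\Omega)$-chain argument, which detects cone points purely from $\mathrm{supp}(L_g)$ (Theorem~\ref{thm:cone geodesics}); (ii) points of strictly negative curvature are matched via Otal's angle-correspondence $\theta'(v,\theta)=\theta$ (Proposition~\ref{prop:angles}) combined with Gauss--Bonnet to force geodesic triangles to degenerate; (iii) the remaining zero-curvature points are handled by triangulating with cone-point vertices and extending the isometry across flat triangles. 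Step (ii) in particular requires knowing that non-singular $\tilde g_1$-geodesics correspond to non-singular $\tilde g_2$-geodesics (Proposition~\ref{prop:non-sing}), which is where the cone-point detection of step (i) is used essentially and where the strip-volume hypothesis on $g_1$ enters. None of this machinery is visible in your proposal, and your final paragraph, while candid about the difficulty, does not supply it.
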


I do not know whether the assumption on the volume of strips is redundant; i.e. whether this is true of all metrics without conjugate points. We will show below that it is satisfied if $g_1$ is in fact in $\mathscr{M}_{npc}(S)$, and in proving this we will see that the existence of cone points is not the issue, so this is a question only about non-singular metrics without conjugate points on a surface with genus $>2$.

If we restrict attention to metrics in $\mathscr{M}_{npc}(S)$, we can drop the strip volume assumption as well as the assumption on genus since the only nonpositively curved cone metrics on the torus are the flat metrics, where rigidity is clear.

\begin{corollary}
Let $S$ be any surface and let $g_1, g_2 \in \mathscr{M}_{npc}(S)$. Assume that $l_{g_1}=l_{g_2}$. Then $(S,g_1)$ is isometric to $(S,g_2)$ by an isometry isotopic to the identity.
\end{corollary}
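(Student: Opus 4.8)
The plan is to deduce the Corollary from Theorem~\ref{main thm}, so the only real content is (a) handling the genus $\le 1$ case separately, and (b) verifying that every metric in $\mathscr{M}_{npc}(S)$ automatically satisfies the strip-volume hypothesis imposed on $g_1$. For (a): if $S$ is the sphere or projective plane, the Gauss--Bonnet formula for cone metrics (with the angle excesses $\ge 0$ contributing negatively to the total curvature) shows no nonpositively curved cone metric with cone angles $>2\pi$ can exist on $S$, so the statement is vacuous. If $S$ is the torus or Klein bottle, $\chi(S)=0$, and Gauss--Bonnet forces the smooth curvature to vanish identically and forces $P=\emptyset$ (any cone angle $>2\pi$ would make the total curvature strictly negative); hence every $g_i$ is a flat metric on the torus. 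On the torus marked length spectrum rigidity among flat metrics is elementary: the stable norm on $H_1(S;\R)$ determined by $l_{g_i}$ is exactly the norm coming from the flat quadratic form, so $l_{g_1}=l_{g_2}$ forces the two flat structures to agree up to an isometry isotopic to the identity. Thus the Corollary holds on all surfaces of genus $\le 1$, and for genus $\ge 2$ it will follow from Theorem~\ref{main thm} once the strip hypothesis is checked.

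For (b), the claim is that for $g_1\in\mathscr{M}_{npc}(S)$ with genus $\ge 2$, the set of unit vectors tangent to a geodesic contained in a flat strip has zero Liouville volume. The idea is to argue that a nontrivial flat strip in $\tilde S$ is a very rigid object: its deck-transformation stabilizer is infinite cyclic (generated by some $\gamma\in\pi_1(S)$ translating along the strip), since the fundamental group of a higher-genus surface is not virtually abelian and cannot contain $\Z^2$. So each strip projects to a closed flat cylinder (or a flat Klein-bottle neighborhood) immersed in $S$, bounded by closed geodesics; because $S$ has finite area there are only finitely many \emph{maximal} such flat cylinders (each has a definite area bounded below in terms of injectivity-type data, or one appeals to the structure theory of the flat part of a nonpositively curved surface). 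A vector tangent to a geodesic lying in \emph{some} strip lies in one of these finitely many immersed flat cylinders; the set of such vectors is therefore a finite union of sets of the form ``unit vectors over a cylinder pointing within the parallel family,'' each of which is a lower-dimensional submanifold (one constraint: the direction must be parallel to the core geodesic) of the $3$-dimensional unit tangent bundle, hence Liouville-null. This is exactly the place where, as the introduction remarks, ``the existence of cone points is not the issue'': the cone points (angle $>2\pi$) can only bound flat regions, never lie in their interior, so they do not enlarge the strip set.

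The main obstacle I expect is making precise the finiteness of maximal flat strips/cylinders in the cone setting and the claim that each contributes only a measure-zero set of tangent vectors. Concretely one wants: (i) a flat strip in $\tilde S$ extends to a maximal flat strip, maximal ones are disjoint or equal, and each is $\gamma$-periodic for a primitive $\gamma$; (ii) the quotient cylinders have areas bounded below, forcing finiteness from $\area(S,g_1)<\infty$; and (iii) within a fixed flat cylinder $[0,w]\times(\R/\ell\Z)$ the geodesics that ``remain at bounded distance from a parallel geodesic,'' i.e.\ the ones genuinely lying in a strip, are precisely those in the parallel direction $\p/\p t$ (the second coordinate), so the corresponding set of unit vectors is a graph over the cylinder and hence $2$-dimensional. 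Step (i) is standard flat-strip-theorem material adapted to CAT(0) cone surfaces (the cone points being treated as points of concentrated nonpositive curvature, which a flat strip must avoid in its interior); step (ii) may instead be phrased as: the union of all flat strips has finite area, so the tangent set sits inside $\{(x,v): x\in F,\ v\ \text{parallel to the local strip direction at }x\}$ where $F$ is the (finite-area, but possibly with infinitely many components) flat region, and this set is still a measurable graph and hence null regardless of the number of components. Once these geometric facts are in hand, the volume bound $vol(\{\cdots\})=0$ is immediate and Theorem~\ref{main thm} applies to give the Corollary.
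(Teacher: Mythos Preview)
Your overall strategy---reduce to Theorem~\ref{main thm} by verifying the strip-volume hypothesis for $g_1\in\mathscr{M}_{npc}(S)$, and dispose of genus $\le 1$ via Gauss--Bonnet and flat-torus rigidity---matches the paper exactly, and that part is fine. The gap is in your argument for (b). You assert that a maximal flat strip in $\tilde S$ has infinite-cyclic deck stabilizer, but the only justification offered (``$\pi_1(S)$ contains no $\Z^2$'') shows the stabilizer is \emph{at most} cyclic, not that it is nontrivial. Nothing you have written rules out a non-periodic maximal flat strip, and without periodicity the strip need not cover a closed cylinder, so the finiteness-by-area step has no footing. Your fallback---that the tangent set is a measurable graph over the zero-curvature locus $F$---also fails as stated: at a point through which several flat strips with distinct directions pass, the ``local strip direction'' is not single-valued, so there is no graph and the Fubini-type conclusion does not follow.

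The paper establishes the strip-volume bound by a genuinely different mechanism (Proposition~\ref{prop:flat strip volume}): if the volume were positive there would be infinitely many maximal flat strips of width $>\delta$ for some fixed $\delta>0$; extracting a limiting flat half-strip and applying an argument of Cao--Xavier produces a flat half-strip strictly wider than a chosen near-maximal one, a contradiction. This completely sidesteps the periodicity and finiteness questions. The paper even remarks afterward that the same argument yields at most countably many maximal flat strips---precisely the missing ingredient that would make your approach succeed---but obtaining that countability already requires the Cao--Xavier step, so your route does not give an independent proof.
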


The proof of Theorem \ref{main thm} combines ideas from the previous work of a number of authors on the marked length spectrum rigidity problem for surfaces. The general problem was posed in \cite[\S3]{bk}. Shortly thereafter it was proved for surfaces of negative curvature by Otal \cite{otal} and, independently and at roughly the same time, by Croke \cite{croke}. The methods in this paper mainly follow Otal's work. The result was extended to $g_1$ without conjugate points and $g_2$ nonpositively curved with a small (empty interior) region of zero curvature by Fathi \cite{fathi}. Croke, Fathi, and Feldman extended this result to $g_2$ of general nonpositive curvature in \cite{cff}.

Hersonsky and Paulin \cite{hp} proved MLS rigidity for negatively curved metrics with cone point singularities of angle $>2\pi$. Duchin, Leininger and Rafi proved it for metrics coming from quadratic differentials \cite{dlr} -- these are a special subset of the locally Euclidean metrics with cone points of angle $>2\pi$. Recently, Bankovic and Leininger \cite{bl} have extended this result to all piecewise Euclidean metrics with cone singularities of angle $>2\pi$. At the end of their paper they asked whether their ideas could be combined with those of Croke, Fathi, and Feldman to prove rigidity for nonpositively curved metrics with cone points. Frazier proved that the marked length spectrum distinguishes between the various curvature settings of these results \cite{frazier}.

There are a few marked-length spectrum rigidity results for non-surfaces (\cite{ham_entropy, dk, cl1d, cl_buildings}). In general, the problem is very much open for dimension greater than two.

The proof of Theorem \ref{main thm} consists of combining the ideas from this sequence of papers. We follow the approach via geodesic currents initiated by Otal. As in Croke, Fathi, and Feldman's work, much of this approach works under the no conjugate points and  nonpositive curvature assumptions on $g_1$ and $g_2$, respectively, and we follow Hersonsky and Paulin in extending Otal's ideas `measurably' to a setting with singularities. A key step in the argument is the definition of the function $\theta'(v,\theta)$ (see \S\ref{sec:angles} below) where one essentially needs to be able to detect which geodesics in the metrics contain cone points. Hersonsky and Paulin handle this using the M\"obius current, which they develop and use to prove other results; it is not clear that this can be made to work in the current setting. Instead, we use a result of Bankovic and Leininger which shows that cone points can be detected in a more `low tech' way -- by looking at the support of a certain geodesic current. These extensions of Otal's methods provide an isometry between the sets of points in the two manifolds at which the curvature is negative, or which are cone points. We then adapt some more ideas from Croke, Fathi, and Feldman, and from Bankovic and Leininger to extend the isometry to the full surface.

\subsection{Acknowledgements}
I would like to thank Jean-Fran\c cois Lafont for originally bringing Bankovic and Leininger's paper to my attention and for helpful comments on an earlier draft of this paper.


%

\section{Geodesics for metrics in $\mathscr{M}_{ncpc}(S)$}\label{sec:geodesics}

In this section we prove a few results on the structure of geodesics in $\tilde S$ for a metric with no conjugate points and with cone points of angle $>2\pi$. The results are straightforward under an assumption of nonpositive curvature, so the reader only interested in that case can skip this section apart from the definitions. They are also known in the absence of cone points, see \cite{lgreen}.

Let $\tilde g$ denote the lift of the metric $g$ to the universal cover $\tilde S$ of $S$. We will call a $\tilde g$-geodesic \emph{non-singular} if it does not hit any cone points for $\tilde g$.

We want to prove that the exponential map, where defined, is injective. This implies that any $\tilde g$-geodesic is length-minimizing, and will be used to show that $\tilde g$-geodesics do not intersect, separate, and then intersect again. For complete manifolds without conjugate points and without cone points, this is Hadamard's theorem (see, e.g. \cite[Thm 3.1]{doc}). Here we adapt aspects of that proof to deal with large-angle cone points.

Let $\exp^{\tilde g}_p$ (or $\exp_p$ when the metric is understood) denote the exponential map from $T_p\tilde S$ to $\tilde S$. This is defined for nonsingular points $p$, but it is clear that we can allow cone points if we interpret $T_p\tilde S$ as the space of positive multiples of directions from $p$. Because of the singular points, $\exp_p$ may not be defined for all vectors in $T_p\tilde S$. Let $\mathscr{V}_p$ be the set of all points in $\tilde S$ which can be reached from $p$ by a geodesic which hits no cone points (except perhaps $p$ itself). Let $V_p$ be such that $\exp_p(V_p) = \mathscr{V}_p$ and which is \emph{star-shaped}: $v\in V_p$ implies $\lambda v\in V_p$ for all $\lambda\in[0,1]$. Note that $\mathscr{V}_p$ and $V_p$ are open.

\begin{lem}\label{lem:covering}
$\exp_p:V_p\to \mathscr{V}_p$ is a covering map.
\end{lem}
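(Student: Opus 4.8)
The plan is to mimic the standard proof that a local isometry from a complete space is a covering map (as in Hadamard's theorem, cf. \cite[Thm 3.1]{doc}), but adapted to allow $p$ itself to be a cone point and to account for the fact that $V_p$ is only star-shaped rather than all of $T_p\tilde S$. The key point to establish is the \emph{path-lifting property}: every path in $\mathscr{V}_p$ starting at $\exp_p(v_0)$ lifts to a path in $V_p$ starting at $v_0$. Since $\mathscr{V}_p$ is an open (hence locally path-connected and semlocally simply connected) subset of the surface and $\exp_p$ is a local homeomorphism onto it, once we have unique path-lifting for all paths the covering map property follows from general topology.

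**First I would** check that $\exp_p$ is a local diffeomorphism (local homeomorphism at $p$ if $p$ is a cone point) on all of $V_p$. Away from $p$ this is exactly the no-conjugate-points hypothesis: $d\exp_p$ is nonsingular at every $v$ for which $\exp_p$ is defined and $\exp_p(v)\neq p$-cone, because conjugate points are precisely the critical values. At $v=0$ (resp. near $0$ when $p$ is a cone point) one uses the Gauss-lemma-type local structure of $\exp_p$: a punctured neighborhood of $p$ is a metric cone over a circle of circumference $=$ (cone angle), so $\exp_p$ is a homeomorphism from a punctured disk in $T_p\tilde S$ (a sector of total angle equal to the cone angle, with $0$ and $2\pi$ identified appropriately) onto a punctured neighborhood of $p$, extending continuously over $0$. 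Because $\mathscr V_p$ was defined to exclude geodesics through \emph{other} cone points, no geodesic ray from $p$ runs into a singularity before the claimed injectivity can fail, so $d\exp_p$ stays nonsingular throughout $V_p\setminus\{0\}$.

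**Next** I would prove unique path-lifting. Given a path $c:[0,1]\to\mathscr V_p$ with $c(0)=\exp_p(v_0)$, let $A\subseteq[0,1]$ be the set of $t$ for which $c|_{[0,t]}$ lifts (starting at $v_0$) to a path in $V_p$. Then $0\in A$, and $A$ is open by the local-homeomorphism property. The crux is showing $A$ is closed, equivalently that a lift $\tilde c$ defined on $[0,t_0)$ extends continuously to $t_0$; here is where completeness of $\tilde g$ and nonpositive curvature (or the no-conjugate-points replacement developed earlier in this section) enter, to control the lift near the boundary of $V_p$. One argues that $\tilde c(t)$ cannot escape to infinity in $T_p\tilde S$ (the lengths of the geodesics $\exp_p(s\tilde c(t))$, $s\in[0,1]$, are bounded since $c([0,1])$ is compact and geodesics are length-minimizing, which gives $\|\tilde c(t)\|$ bounded) and cannot limit onto a direction in $V_p\setminus\partial V_p$ corresponding to a geodesic hitting another cone point — because $c(t_0)\in\mathscr V_p$ means there \emph{is} a nonsingular geodesic from $p$ to $c(t_0)$, and a limiting argument identifies $\lim\tilde c(t)$ with the corresponding vector in $V_p$. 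Thus $A=[0,1]$.

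**The main obstacle** I anticipate is exactly this last step: the boundary behavior of lifts near $\partial V_p$, i.e. ruling out that a lifted path runs off toward a direction along which a geodesic from $p$ would hit a cone point. Without cone points this is automatic from completeness; with them one must use that $\mathscr V_p$ is open and genuinely avoids such directions, plus continuous dependence of (minimizing) geodesics on endpoints — which itself relies on the structural results about geodesics in $\tilde S$ established earlier in this section (no conjugate points $\Rightarrow$ geodesics are minimizing and vary continuously, branching only at cone points). A secondary, more technical nuisance is handling the basepoint $p$ when it is itself a cone point, since $T_p\tilde S$ is then a cone rather than a vector space; one treats it as the space of rays with a metric making the angular coordinate have total length equal to the cone angle, and checks that the lifting arguments go through verbatim in a neighborhood of the apex. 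Once unique path-lifting is in hand, the evenly-covered neighborhoods are produced in the usual way (take a simply connected $U\ni q$ in $\mathscr V_p$; each sheet over $U$ is obtained by lifting paths, and the local-homeomorphism property makes each sheet map homeomorphically onto $U$), completing the proof that $\exp_p:V_p\to\mathscr V_p$ is a covering map.
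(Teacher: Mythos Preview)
Your overall strategy---reduce to path-lifting after noting that $\exp_p$ is a local diffeomorphism by the no-conjugate-points hypothesis---is exactly the paper's. Your handling of $p$ as a possible cone point is also consistent with the paper's setup.

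The genuine gap is a circularity in your closedness step. To bound $\|\tilde c(t)\|$ you argue that the radial geodesics $s\mapsto\exp_p(s\,\tilde c(t))$ have bounded length ``since $c([0,1])$ is compact and geodesics are length-minimizing,'' and in your final paragraph you refer to ``the structural results about geodesics in $\tilde S$ established earlier in this section (no conjugate points $\Rightarrow$ geodesics are minimizing\dots).'' But in the paper's logical order this lemma is the \emph{first} result of Section~\ref{sec:geodesics}; the fact that $\tilde g$-geodesics minimize is Corollary~\ref{cor:minimizing}, which is deduced \emph{from} this lemma via Lemmas~\ref{lem:loc inj}, \ref{lem:inj on vp} and Proposition~\ref{prop:no two int}. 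So you cannot invoke it here. The paper instead pulls the metric back to $V_p$ and argues boundedness of the lift directly in that pullback metric, then extracts an accumulation point of $v(t_n)$ inside $\exp_p^{-1}(c)\cap B_R(0)$ and uses the local-diffeomorphism property at that point to extend the lift to $t_0$.

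A secondary issue: your argument that the lift cannot accumulate on $\partial V_p$---``because $c(t_0)\in\mathscr V_p$ means there \emph{is} a nonsingular geodesic from $p$ to $c(t_0)$''---does not by itself rule out the lift heading toward a boundary direction. Knowing that \emph{some} $w'\in V_p$ maps to $c(t_0)$ does not force $\lim_{t\to t_0}\tilde c(t)=w'$. You would need an additional compactness argument here, and the one you gesture at (continuous dependence of minimizing geodesics on endpoints) again presupposes results that lie downstream of this lemma.
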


\begin{proof}
Since there are no conjugate points for $\tilde g$, $\exp_p$ is a local diffeomorphism. Thus, to show it is a covering map, we just need to establish the path-lifting property. Let $c:[0,1]\to \mathscr{V}_p$ be a curve. Let $v_0\in V_p$ be any lift of $c(0)$ under $\exp_p$. Since $\exp_p$ is a local diffeomorphism, the set $A$ of all times $t$ for which $c(t)$ can be lifted to a continuous path $v(t)$ in $V_p$ starting at $v(0)=v_0$ contains at least $[0,\epsilon)$ for some $\epsilon>0$ and $A$ is open in $[0,1]$ at its right-hand endpoint. Suppose $[0,t_0)\subset A$. We wish to show that $t_0\in A$ as well, which will imply that the entire path can be lifted, proving the lemma.


Note that $\exp_p^{-1}(c)$ is a closed subset of $V_p$ and if we let $R>0$ be sufficiently large, then $c\subset \exp_p(B_R(0)\cap V_p)$.  Using the metric which has been lifted to $V_p$ we see that we can also choose $R$ so large that $v_0\in B_R(0)$ and all $v(t_n) \in B_R(0)$. Then $v(t_n)$ have an accumulation point $v^* \in \exp^{-1}_p(c) \cap B_R(0)$ because this is a closed and bounded subset of $V_p$. We extend $v(t)$ by setting $v(t_0)=v_*$. Again using the fact that $\exp_p$ is a local diffeomorphim at $v(t_0)$ we see that this is indeed a continuous extension of the lifted path. This completes the proof.
 
\end{proof}

\begin{lem}\label{lem:loc inj}
Let $\mathscr{U}$ be a simply connected subset of $\mathscr{V}_p$ containing $p$. Suppose that every point of $\mathscr{U}$ can be reached from $p$ by a geodesic in $\mathscr{U}$. Then there is a star-shaped $U\subset V_p$ containing $0$ such that $\exp_p(U) = \mathscr{U}$ and $\exp_p$ is injective on $U$.
\end{lem}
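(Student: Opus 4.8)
The plan is to mimic the standard proof that a local diffeomorphism which is a covering map, restricted over a simply connected base, is a diffeomorphism onto its image — but carried out carefully inside the star-shaped domain $V_p$ so that the resulting lift domain $U$ is itself star-shaped. First I would apply Lemma \ref{lem:covering}: $\exp_p : V_p \to \mathscr{V}_p$ is a covering map, so its restriction over the simply connected subset $\mathscr{U}$ is a trivial covering. Pick the sheet containing $0 \in V_p$ (note $\exp_p(0) = p \in \mathscr{U}$) and call it $U_0$; then $\exp_p : U_0 \to \mathscr{U}$ is a homeomorphism, in fact a diffeomorphism since $\exp_p$ is a local diffeomorphism everywhere it is defined. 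It remains to check $U_0$ is star-shaped, i.e. that $U_0 = U$ works.

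For star-shapedness I would use the hypothesis that every point of $\mathscr{U}$ is reached from $p$ by a geodesic lying \emph{in} $\mathscr{U}$. Fix $w \in U_0$ and let $q = \exp_p(w) \in \mathscr{U}$. By hypothesis there is a geodesic from $p$ to $q$ staying in $\mathscr{U}$; lifting this geodesic through $\exp_p^{-1}$ via the trivialization over $\mathscr{U}$, it lifts to a path in $U_0$ from $0$ to some preimage of $q$. But a geodesic from $p$ to $q$ is, by definition of the exponential map, of the form $t \mapsto \exp_p(tv)$ for the appropriate initial vector $v$ with $\exp_p(v) = q$; so the segment $\{tv : t \in [0,1]\} \subset V_p$ (it lies in $V_p$ because the geodesic hits no cone points, staying in $\mathscr{U} \subset \mathscr{V}_p$) is a lift of this geodesic starting at $0$. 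By uniqueness of path lifts in a covering, this segment must be exactly the lift inside $U_0$, so $\{tv\} \subset U_0$, and its endpoint $v$ is a preimage of $q$ lying in $U_0$. Since $\exp_p$ is injective on $U_0$ and $w \in U_0$ with $\exp_p(w) = q = \exp_p(v)$, we get $w = v$, hence $\{tw : t \in [0,1]\} = \{tv\} \subset U_0$. This is exactly star-shapedness.

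The one subtlety I want to flag — and what I expect to be the main obstacle — is the interaction between "geodesic lying in $\mathscr{U}$" and the domain $V_p$: I must be sure that the geodesic from $p$ to $q$ provided by the hypothesis corresponds to a \emph{radial} segment $\{tv\}$ that actually sits inside $V_p$, so that it is available as a lift. This is fine because such a geodesic hits no cone points (cone points are not in $\mathscr{V}_p \supset \mathscr{U}$ except possibly $p$ itself, and the definition of $\mathscr{V}_p$ already tolerates $p$ being a cone point), so every initial subsegment also hits no cone points and therefore lies in $\mathscr{V}_p$ with radial preimage in the star-shaped set $V_p$. A second point to handle cleanly is that the hypothesis only gives \emph{some} geodesic from $p$ to $q$ in $\mathscr{U}$, and a priori there could be several preimages of $q$ in $V_p$ reached by radial segments; the covering-space uniqueness of lifts over the simply connected set $\mathscr{U}$ is precisely what rules this out and pins the lift down to the single sheet $U_0$. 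Once these two points are in place the argument closes, with $U := U_0$.
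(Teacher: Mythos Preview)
Your proof is correct and follows essentially the same approach as the paper: use Lemma~\ref{lem:covering} to split $\exp_p^{-1}(\mathscr{U})$ into sheets, take $U$ to be the sheet containing $0$, and then verify star-shapedness via the correspondence between geodesics in $\mathscr{U}$ from $p$ and radial segments in $V_p$. The paper compresses the star-shapedness check into one sentence; your version spells out the uniqueness-of-lifts argument that makes it work, which is a reasonable level of detail.
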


\begin{proof}
Since $\exp_p$ is a covering map on $V_p$ and $\mathscr{U}$ is simply connected, $\exp_p^{-1}(\mathscr{U})$ is a disjoint union of subsets taken diffeomorphically to $\mathscr{U}$ by $\exp_p$. Pick as $U$ the one containing $0$. It is star-shaped because the geodesics in $\mathscr{U}$ correspond to the rays through the origin in $U$.
\end{proof}

\begin{lem}\label{lem:inj on vp}
$\exp_p$ is injective on $V_p$.
\end{lem}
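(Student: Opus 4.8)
The plan is to show $\exp_p$ is injective on $V_p$ by arguing that $V_p$ itself is simply connected and then invoking Lemma~\ref{lem:loc inj} (or rather the covering-map structure from Lemma~\ref{lem:covering}) with $\mathscr{U} = \mathscr{V}_p$. The key point to establish is that $\mathscr{V}_p$ is simply connected and satisfies the hypothesis of Lemma~\ref{lem:loc inj} that every point is reachable from $p$ by a geodesic staying inside the set — the latter is immediate since $\mathscr{V}_p$ was defined precisely as the set of points reachable from $p$ by non-singular geodesics, and such a geodesic, together with its initial segment scalings, stays in $\mathscr{V}_p$. So the heart of the matter is: $\mathscr{V}_p$ is simply connected.

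To see that $\mathscr{V}_p$ is simply connected, I would use that $\tilde S$ is simply connected (being a universal cover) together with a convexity-type argument: I claim $\mathscr{V}_p$ is contractible via the geodesic homotopy. First, since $\exp_p : V_p \to \mathscr{V}_p$ is a covering map (Lemma~\ref{lem:covering}) and $V_p$ is star-shaped, hence contractible, $V_p$ is simply connected. Therefore the covering $\exp_p : V_p \to \mathscr{V}_p$ is, once we know it is connected, a covering whose total space is simply connected. If I can show this covering has trivial deck group — equivalently that it is injective — then we are done, but that is circular. So instead I would directly contract $\mathscr{V}_p$: given $q \in \mathscr{V}_p$ reached by a non-singular geodesic $\sigma_q : [0,1] \to \tilde S$ from $p$, the family $H(q,s) = \sigma_q(s)$ should give a deformation retraction of $\mathscr{V}_p$ onto $p$, provided $\sigma_q$ depends continuously on $q$ and each $\sigma_q(s)$ lies in $\mathscr{V}_p$. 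Continuity in $q$ follows from the fact that $\exp_p$ is a local diffeomorphism (no conjugate points) so that the "logarithm" $\log_p = \exp_p^{-1}$ is locally well-defined and single-valued along a continuously varying family once we fix a starting lift; the lifting lemma (Lemma~\ref{lem:covering}) makes this precise. Thus $\mathscr{V}_p$ is contractible, in particular simply connected.

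Now apply Lemma~\ref{lem:loc inj} with $\mathscr{U} = \mathscr{V}_p$: the hypotheses are met (simply connected, every point reached by a geodesic in the set), so there is a star-shaped $U \subset V_p$ with $\exp_p(U) = \mathscr{V}_p$ and $\exp_p$ injective on $U$. But $\mathscr{V}_p = \exp_p(V_p)$ and $V_p$ is the star-shaped set with this property chosen at the outset; since $U$ is star-shaped, $\exp_p(U) = \mathscr{V}_p$, and rays through $0$ in $U$ map to geodesics from $p$, one checks $U = V_p$ (both are the full preimage-component structure forces $U \supseteq$ each ray, and $V_p$ is defined to contain exactly one geodesic direction's worth of points reaching each of $\mathscr{V}_p$). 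Hence $\exp_p$ is injective on $V_p$.

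The main obstacle I anticipate is the rigorous verification that $\mathscr{V}_p$ is simply connected in the presence of the large-angle cone points: the naive geodesic-contraction argument needs the family of connecting geodesics $\sigma_q$ to vary continuously and to avoid cone points for all intermediate parameters, and one must rule out pathologies where a family of non-singular geodesics degenerates onto a cone point in the limit (which is exactly what would obstruct $\mathscr{V}_p$ being contractible, and indeed is why $\mathscr{V}_p$ is merely open rather than all of $\tilde S$). Handling this likely requires combining the local-diffeomorphism property with the path-lifting established in Lemma~\ref{lem:covering} and a careful argument that the set of $q$ for which a continuous geodesic variation exists is both open and closed in $\mathscr{V}_p$ — essentially redoing the lifting argument of Lemma~\ref{lem:covering} at the level of the contraction homotopy rather than a single path.
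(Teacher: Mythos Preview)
Your approach has a genuine circularity. Since $\exp_p : V_p \to \mathscr{V}_p$ is a covering with connected, simply connected total space $V_p$, injectivity of $\exp_p$ on $V_p$ is \emph{equivalent} to simple connectedness of $\mathscr{V}_p$; your reduction is therefore only a restatement. Your proposed proof of simple connectedness via the geodesic retraction $H(q,s)=\sigma_q(s)$ requires a continuous choice of $\sigma_q$, i.e.\ a global continuous section of the covering $\exp_p$. But a covering admits a global section exactly when it is trivial, which is the claim itself. Path-lifting and the local-diffeomorphism property give only local sections; they cannot be patched into a global one without already knowing the base is simply connected. Your open-and-closed suggestion fails for the same reason: if injectivity did fail, two non-singular geodesics from $p$ to some $q$ would bound a bigon enclosing a cone point $\zeta\notin\mathscr{V}_p$, producing an honest essential loop in $\mathscr{V}_p$ and a genuine obstruction to any global section. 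No amount of connectivity bookkeeping removes that hole.

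The paper avoids this trap by a direct geometric argument that uses the cone-point structure explicitly. Growing $U_r = V_p\cap B_r(0)$, Lemma~\ref{lem:loc inj} says injectivity can first fail only when $\exp_p(U_r)$ becomes non-simply-connected, which forces two radial geodesics from $p$ to meet after passing on opposite sides of some cone point $\zeta$. One of them, say $\gamma$, must then cross the geodesic $l_1$ from $p$ through $\zeta$ that makes angle $\pi$ on $\gamma$'s side of $\zeta$. A nearby non-singular geodesic $\eta$ from $p$, taken just to $\gamma$'s side of the direction toward $\zeta$, tracks $l_1$ long enough to cross $\gamma$ as well --- but $\eta$ and $\gamma$ now lie in a simply connected half of $\exp_p(U_r)$, contradicting Lemma~\ref{lem:loc inj}. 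This is the missing idea: one must manufacture a pair of intersecting non-singular geodesics inside a region where Lemma~\ref{lem:loc inj} already applies, rather than attempt a global topological statement about $\mathscr{V}_p$.
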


\begin{proof}
Consider $\mathscr{U}_r=\exp_p(U_r)$ where $U_r = V_p\cap B_r(0)$. Note that every point in $\mathscr{U}_r$ can be reached from $p$ by a geodesic entirely in $\mathscr{U}$, since $U_r$ is star-shaped.

By Lemma \ref{lem:loc inj}, any failure of injectivity for $\exp_p$ must arise from a situation where $\exp_p(U_r)$ is not simply connected. This can only happen if two radial geodesics from $p$ meet after going around opposite sides of some cone point $\zeta$. This situation is depicted in Figure \ref{fig:inj}.

\begin{center}
\setlength{\unitlength}{.23mm}
\begin{picture}(660,200)(-220,-100)

\put(150,0){\circle*{6}}
\put(-50,0){\circle*{6}}

\put(-57,-17){$p$}
\put(143,-17){$\zeta$}

\multiput(-50,0)(26,0){8}{\line(1,0){20}}
\multiput(150,0)(20,20){5}{\line(1,1){16}}
\multiput(150,0)(20,-20){5}{\line(1,-1){16}}

\put(230,100){$l_1$}
\put(230,-110){$l_2$}

\qbezier(-100,-50)(-160,0)(-100,50)
\qbezier(-100,50)(70,180)(250,50)
\qbezier(-100,-50)(70,-180)(250,-50)
\qbezier(250,50)(320,0)(250,-20)
\qbezier(250,-50)(320,-0)(250,20)
\qbezier(250,-20)(210,-30)(200,0)
\qbezier(250,20)(210,30)(200,0)
\qbezier(200,0)(195,40)(150,0)
\qbezier(200,0)(195,-40)(150,0)

\qbezier(-50,0)(100,120)(260,-10)
\qbezier(-50,0)(100,-120)(260,10)

\put(60,64){$\gamma$}

\qbezier(-50,0)(150,0)(215,60)

\put(110,23){$\eta$}

\put(-110,80){$\mathscr{U}_r$}

\end{picture}

\begin{figure}[h]
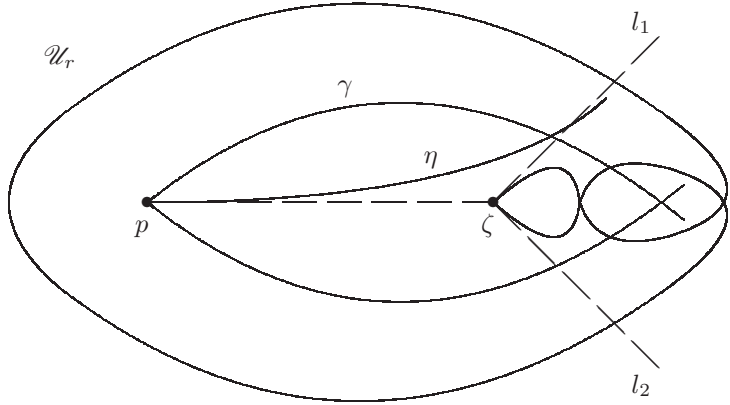

\caption{Potential non-injectivity of $\exp_p$ `behind' a cone point $\zeta$.}\label{fig:inj}
\end{figure}

\end{center}

In Fig \ref{fig:inj}, $l_1$ and $l_2$ are geodesics from $p$ through the cone point $\zeta$ making angle $\pi$ on either side of $\zeta$.  A pair of geodesics demonstrating potential non-injectivity of the exponential map `behind' $\zeta$ are shown; we see that $\exp_p(U_r)$ is not simply connected, due to the cone point. It is clear that one of the two intersecting geodesics, say $\gamma$, intersects $l_1$ or $l_2$, say $l_1$. Take a tangent vector in $V_p$ at $p$ very close to the vector between $p$ and $\zeta$ and on $\gamma$'s side of $\zeta$. The geodesic $\eta$ it generates must stay close to $l_1$ for a long time, and therefore must intersect $\gamma$ before it leaves $\mathscr{U}_r$. But then restricting $U_r$ to, say, those vectors generating the top half of Figure \ref{fig:inj} so that we have a simply connected region produces a contradiction to Lemma \ref{lem:loc inj}.

\end{proof}

\begin{prop}\label{prop:no two int}
The intersection of two distinct $\tilde g$-geodesics in $\tilde S$ for $g\in \mathscr{M}_{ncpc}(S)$ has at most one connected component.
\end{prop}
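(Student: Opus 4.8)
The plan is to argue by contradiction, supposing that two distinct $\tilde g$-geodesics $\g_1$ and $\g_2$ share two distinct connected components of intersection. These components are either single points or closed subsegments (possibly containing cone points, where the geodesics bend). Taking the two components to be ``consecutive'' along $\g_1$ and $\g_2$, pick a point $a$ in the first component and a point $b$ in the second; the subsegments of $\g_1$ and of $\g_2$ running from $a$ to $b$ bound a closed region $R\subset\tilde S$ which is a disk (since $\tilde S$ is simply connected, being the universal cover of a surface of genus $\geq 2$), and the two geodesic subsegments meet only at $a$ and $b$. The heart of the matter is that such a geodesic bigon cannot exist in a space with no conjugate points and cone angles $>2\pi$.

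The key tool is Lemma \ref{lem:inj on vp}: the exponential map $\exp_p$ is injective on $V_p$, hence any non-singular $\tilde g$-geodesic segment is length-minimizing among nearby curves, and more to the point, two distinct geodesic segments from a common point $p$ that stay in a region where the exponential map is injective cannot meet again. So first I would reduce to the case where the bigon region $R$ contains no cone point in its interior: if it does, one can still use the cone points to get a contradiction via the angle-sum argument below, but the cleanest route is to observe that the subsegment of $\g_1$ from $a$ to $b$ lies in $\mathscr{V}_a$ (it is reachable from $a$ by a geodesic hitting no cone point other than possibly $a$), and likewise for the $\g_2$ subsegment; if the interior of $R$ is cone-point-free, then a simply connected neighborhood of $R$ on which $\exp_a$ pulls back injectively (by Lemma \ref{lem:loc inj}) contains both subsegments, forcing them to coincide — contradiction.

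When $R$ does contain cone points, the argument is the Gauss–Bonnet / angle-count one familiar from the piecewise-Euclidean and CAT(0) settings. The boundary $\p R$ consists of two geodesic arcs meeting at $a$ and $b$ with interior angles $\a$ and $\b$ at those two vertices, plus possibly some ``corner'' vertices where $\g_1$ or $\g_2$ passes through a cone point lying on $\p R$, at which the interior angle is at least $\pi$ (cone angle $>2\pi$ means each side of a geodesic through the cone point subtends angle $>\pi$... more precisely, the geodesic property says the angle on \emph{each} side is $\geq\pi$, so the interior angle at such a vertex is $\geq\pi$). Applying Gauss–Bonnet to $R$, whose smooth part has curvature $\le 0$ by the nonpositive curvature assumption — and here is where I would instead invoke the no-conjugate-points structure, since without a curvature bound I must argue differently — gives $2\pi\chi(R) = \int_R K\, dA + \sum(\pi - \theta_i) + (\text{cone excess terms} \le 0)$; with $\chi(R)=1$, $\int_R K\,dA\le 0$, exterior angles at the two genuine vertices summing to at most $2\pi$ and all other exterior angles $\le 0$, this is consistent only in the degenerate flat-strip case, which is exactly excluded here because a genuine bigon is not a flat strip.

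The main obstacle is handling the no-conjugate-points case without a curvature sign: Gauss–Bonnet as stated needs $K\le 0$, so for $g_1\in\mathscr{M}_{ncpc}(S)$ I cannot use it directly. The right substitute is the injectivity of $\exp_p$ established in this section, which does \emph{not} rely on nonpositive curvature. Concretely, I expect to run the argument as follows: suppose the bigon $R$ exists; consider $\exp_a^{-1}$ on a simply connected neighborhood $\mathscr{U}\supset R$ of $R$ with every point reachable from $a$ by a geodesic in $\mathscr{U}$ (shrinking $R$ toward $a$ if necessary and using that interior cone points would already give separate ray-images). By Lemma \ref{lem:loc inj} there is a star-shaped $U$ with $\exp_a$ injective on $U$; the two arcs of $\p R$ pull back to two distinct rays from $0$ that must both hit the single preimage of $b$ — impossible. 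The delicate point, and where I would spend the most care, is justifying that the bigon region (or a sub-bigon near one vertex) together with the two bounding geodesic arcs really does lie in a region on which Lemma \ref{lem:loc inj} applies — i.e.\ controlling cone points on and inside $R$ — and this is precisely the kind of ``local injectivity behind a cone point'' analysis already carried out in the proof of Lemma \ref{lem:inj on vp}, so I would mirror that figure-and-$\eta$-geodesic argument to close the gap.
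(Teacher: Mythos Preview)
There is a genuine gap. Your ``interior of $R$ cone-point-free'' case silently assumes that both boundary arcs $\g_1|_{[a,b]}$ and $\g_2|_{[a,b]}$ are themselves non-singular: you assert that each subsegment lies in $\mathscr{V}_a$ because it is ``reachable from $a$ by a geodesic hitting no cone point other than possibly $a$,'' but that is exactly the statement that the arc itself avoids cone points, which you have not assumed and in fact cannot assume. Indeed, the first observation in the paper's proof is that if \emph{both} arcs were non-singular then Lemma~\ref{lem:inj on vp} already gives the contradiction directly (two rays from $0\in V_a$ with the same image $b$), so at least one arc \emph{must} pass through a cone point between $a$ and $b$. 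All of the real content of the proposition lies in handling those boundary cone points, and your first case simply does not engage with them.

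Your fallback arguments do not close this gap. You correctly note that Gauss--Bonnet requires $K\le 0$ and so is unavailable in $\mathscr{M}_{ncpc}(S)$. The alternative --- ``mirror the figure-and-$\eta$-geodesic argument'' of Lemma~\ref{lem:inj on vp} --- is not a proof: that lemma deals with one cone point blocking radial geodesics from a single base point, whereas here the bigon may have many cone points along its two sides, and you give no mechanism for reducing their number. The paper's actual argument supplies exactly this mechanism: it inducts on the number of cone points lying on or between the two arcs (with angle $>\pi$ on the interior side), and at each step ``straightens'' one geodesic at its last such cone point --- replacing it by the ray making angle $\pi$ on the bigon side --- to produce a new intersecting pair with strictly fewer relevant cone points. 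The base case (one cone point) is then handled by approximating the straightened geodesic by a nearby non-singular one and invoking Lemma~\ref{lem:inj on vp}. This inductive straightening is the missing idea in your proposal.
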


This has an immediate corollary:

\begin{cor}\label{cor:minimizing}
No $\tilde g$-geodesic self-intersects, and every $\tilde g$-geodesic is minimizing.
\end{cor}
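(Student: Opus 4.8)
The plan is to argue by contradiction, supposing that two distinct $\tilde g$-geodesics $\g_1, \g_2$ meet in two or more connected components of their intersection. The first observation is that a connected component of $\g_1 \cap \g_2$ is itself a geodesic segment (possibly degenerate, i.e. a single point), since both curves are geodesics and at any point of the intersection they must either cross transversally or coincide on a neighborhood; if they coincided on a full neighborhood of every intersection point the intersection would be a single segment. So having $\geq 2$ components means there exist points $a \neq b$ lying on both $\g_1$ and $\g_2$ with an arc of $\g_1$ from $a$ to $b$ disjoint (except at endpoints) from the corresponding arc of $\g_2$ from $a$ to $b$ — the two geodesics split apart and later rejoin, bounding a bigon.

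The heart of the argument is then to derive a contradiction with injectivity of the exponential map, which we have from Lemma~\ref{lem:inj on vp}. First I would check that the bigon region can be reached from one of its vertices by non-singular geodesics, or more precisely set up the right domain: take $p = a$ and consider $\mathscr{V}_p$ and $V_p$ as in the previous lemmas. Both arcs of the bigon, being geodesic segments starting at $p = a$, lift to rays from the origin in $V_p$ (this is where one must be slightly careful about cone points lying on the arcs themselves — but the two arcs emanate from $a$ in two distinct directions, so they are the images of two distinct rays from $0 \in V_p$, at least initially). Since both arcs terminate at the common point $b \in \mathscr{V}_p$, these two distinct rays from the origin are carried by $\exp_p$ to the same point $b$, contradicting the injectivity of $\exp_p$ on $V_p$ established in Lemma~\ref{lem:inj on vp} — provided the relevant rays lie in $V_p$, i.e. the arcs hit no cone points in their interiors.

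The main obstacle I anticipate is exactly that caveat: one of the two arcs of the bigon might pass through a cone point, so that the corresponding ``ray'' is not a single ray in $V_p$ (the geodesic could be continued past the cone point in more than one way, or $\exp_p$ is simply not defined there). To handle this, I would instead choose $p$ to be a point in the \emph{interior} of one of the bigon arcs, close enough to $a$ that the sub-bigon cut off near $a$ contains no cone points other than possibly $a$ itself, and then run the same argument on that smaller bigon; or alternatively perturb the starting vertex slightly off the vertex as in the proof of Lemma~\ref{lem:inj on vp}, using a nearby non-singular geodesic that must stay close to one arc long enough to cross the other, again contradicting Lemma~\ref{lem:loc inj}. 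Either way the point is that between two consecutive cone points along the boundary of the bigon we have a genuinely non-singular sub-picture to which Lemmas~\ref{lem:loc inj} and~\ref{lem:inj on vp} apply directly. The corollary is then immediate: a self-intersection of a single geodesic would produce two geodesic segments sharing both endpoints (the self-intersection point with itself, traversed along two different subarcs) hence again a bigon and a contradiction; and minimality follows since a non-minimizing geodesic segment together with a shorter path would, after taking geodesic representatives, force two geodesics to meet in two components.
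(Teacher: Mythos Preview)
Your overall strategy matches the paper's: the corollary is immediate once you know two distinct $\tilde g$-geodesics cannot form a bigon (this is exactly Proposition~\ref{prop:no two int}), and you correctly identify that the obstruction to applying Lemma~\ref{lem:inj on vp} directly at a vertex is the possible presence of cone points on the bigon arcs. The derivation of the corollary from that proposition at the end of your sketch is fine.

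The gap is in your handling of that obstruction. Neither proposed fix works as written. Choosing $p$ in the interior of one arc does not produce any ``sub-bigon'': the other arc does not pass through $p$, so there is no smaller bigon with vertex $p$ on which to rerun the argument. Likewise, ``between two consecutive cone points along the boundary'' you have a non-singular segment of \emph{one} arc, but no bigon at all---the other arc is elsewhere---so Lemmas~\ref{lem:loc inj} and~\ref{lem:inj on vp} give no contradiction there. The paper's resolution is an induction on the number of cone points lying on or between the two arcs and making angle $>\pi$ on the interior side. The key trick is: at such a cone point $\zeta$ on $l_1$, replace $l_1$ past $\zeta$ by the geodesic $l_1'$ making angle exactly $\pi$ on the bigon side; $l_1'$ then enters the bigon and must cross $l_2$, producing a new bigon with strictly fewer relevant cone points. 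In the base case one approximates $l_1'$ by a nearby non-singular geodesic (your perturbation idea) and contradicts Lemma~\ref{lem:inj on vp}. Without the angle-$\pi$ straightening and the induction, a single perturbation near $a$ need not stay close to a singular arc long enough to force an intersection when several cone points are present.
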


\begin{proof}[Proof of Proposition \ref{prop:no two int}]
Suppose we have two geodesics $l_1$ and $l_2$ which intersect at $p$ and $q$, but not between. By Lemma \ref{lem:inj on vp}, at least one of these geodesics is singular and hits a cone point between $p$ and $q$. We will prove the result by induction on the number of cone points which lie between $l_1$ and $l_2$ between $p$ and $q$ or which lie on them but make angle $>\pi$ on the `between' side.

Suppose there is only one such cone point, $\zeta$ on $l_1$ or $l_2$ between their intersections making angle $>\pi$ on the `between' side. Assume $\zeta$ belongs to $l_1$. Let $l_1'$ be the geodesic through $p$ and $\zeta$ making angle $\pi$ on the side of $\zeta$ to which the segment of $l_2$ between $p$ and $q$ lies (see Figure \ref{fig:intersect}).

\begin{center}
\setlength{\unitlength}{.23mm}
\begin{picture}(660,120)(-180,-40)

\put(80,50){\circle*{6}}
\put(-50,0){\circle*{6}}
\put(209,50){\circle*{6}}

\put(-57,-17){$p$}
\put(75,60){$\zeta$}
\put(205,60){$q$}

\put(260,50){$l_1$}
\put(100,-30){$l_2$}
\put(240,-10){$l_1'$}

\qbezier(-50,0)(15,25)(80,50)
\qbezier(80,50)(90,50)(250,50)
\qbezier(80,50)(165,25)(230,0)
\qbezier(-50,0)(100,-60)(230,70)

\end{picture}

\begin{figure}[h]
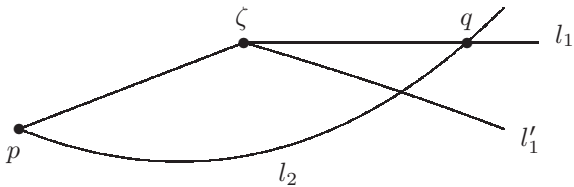

\caption{Base case for the induction in Propostion \ref{prop:no two int}.}\label{fig:intersect}
\end{figure}

\end{center}

We see then that $l_1'$ and $l_2$ intersect before $l_2$ reaches $q$. We know that $l_2$ is non-singular over this segment. Although $l_1'$ is not, we may approximate it by a non-singular segment on the $l_2$-side of $\zeta$ since the angles along $l_1$ on that side are always $\pi$. The intersection of this approximating segment and $l_2$ yields a contradiction to Lemma \ref{lem:inj on vp}.

Now suppose we have proven the result when $n$ or fewer cone points lie between $l_1$ and $l_2$. We can reduce to the case where all the cone points lie on the geodesics by re-drawing $l_1$ from $p$ through a cone point between $l_1$ and $l_2$. At the last cone point before $q$ on $l_1$ or $l_2$ where an angle $>\pi$ is made, replace the geodesic by the angle $\pi$ geodesic on the correct side of that cone point as above. We see that the number of cone points which are between $l_1$ and $l_2$ and at which one of the geodesics makes angle $>\pi$ strictly decreases. By the inductive hypothesis, we are done. 
\end{proof}

We will need the following below. An analogous result is the key to Fathi and Croke-Fathi-Feldman's extension to the `no conjugate points' assumption for $g_1$. They reference Morse (\cite{morse}) for the version without cone points; we give an alternate proof here allowing cone points.

\begin{prop}\label{prop:morse}
Let $g_1,g_2\in \mathscr{M}_{ncpc}(S)$ for a surface of genus $\geq 2$. Then there exists a constant $K>0$, depending only on $g_1$ and $g_2$, such that any $\tilde g_1$-geodesic contains a $\tilde g_2$-geodesic in its $K$-neighborhood, and vice versa.
\end{prop}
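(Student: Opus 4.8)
The plan is to prove this via a Milnor–Švarc type argument, exploiting that $\pi_1(S)$ acts cocompactly on $\tilde S$ for both metrics and that $\mathrm{id}:(\tilde S,\tilde g_1)\to(\tilde S,\tilde g_2)$ is a $\pi_1(S)$-equivariant quasi-isometry. First I would fix a fundamental domain and note that since $S$ is compact there are constants $0<c<C$ with $c\,d_{\tilde g_1}(x,y)-c' \le d_{\tilde g_2}(x,y)\le C\,d_{\tilde g_1}(x,y)+C'$ for all $x,y\in\tilde S$ — both metrics are complete, geodesic, and the identity is bi-Lipschitz on the compact $S\setminus(\text{small balls around }P)$, with the cone points handled by observing that a $\tilde g_i$-geodesic through or near a cone point still has controlled length in the other metric because near a cone point both metrics are boundedly close to a model cone. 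The point of Corollary \ref{cor:minimizing} is that $\tilde g_i$-geodesics are globally minimizing, so $d_{\tilde g_i}$ is realized by geodesic segments and these comparisons apply directly to geodesic segments.

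Next, given a complete $\tilde g_1$-geodesic $l$, I take a sequence of points $x_n=l(n)$ marching out to its two endpoints, and join consecutive $x_n$ by $\tilde g_2$-geodesic segments $\sigma_n$; concatenating gives a $\tilde g_2$-quasigeodesic (the key estimate: the $\tilde g_1$-subsegment $l|_{[n,m]}$ has $\tilde g_1$-length $m-n$, hence $\tilde g_2$-length between consecutive points is uniformly bounded, and any $\tilde g_2$-subsegment is within bounded distance of being efficient because $l$ itself is $\tilde g_1$-minimizing and the metrics are comparable). The hard part is upgrading this quasigeodesic to an actual $\tilde g_2$-geodesic that stays uniformly close. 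In hyperbolic space one invokes the Morse lemma (stability of quasigeodesics), but $\tilde g_2$ is only nonpositively curved, so quasigeodesics need not be stable in general. This is exactly the obstacle Morse's original argument (cited in the paper) and its no-conjugate-points analogue circumvent, and where I would do the real work.

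To get around it, I would use the no-conjugate-points / nonpositive-curvature structure rather than generic coarse geometry: because $g_1,g_2$ have no conjugate points and $S$ has genus $\ge 2$, the universal covers are (uniformly) Gromov-hyperbolic-\emph{like} only at infinity, but more usefully, one can appeal directly to the flat-strip dichotomy and the divergence of geodesics in $\tilde g_2$. Concretely: the bi-infinite $\tilde g_1$-geodesic $l$ has two distinct ideal endpoints in $\partial\tilde S$ (the boundary being metric-independent as a $\pi_1$-set since both metrics are quasi-isometric to a fixed hyperbolic one); let $m$ be \emph{the} $\tilde g_2$-geodesic with the same pair of endpoints — it exists because in nonpositive curvature with genus $\ge 2$ any two distinct boundary points are joined by a geodesic (possibly a flat strip, in which case pick one). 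I then show $l$ and $m$ stay within bounded Hausdorff distance: if not, there are points of $l$ arbitrarily far from $m$; pushing these back by deck transformations into the fundamental domain and extracting a limit produces a complete $\tilde g_1$-geodesic disjoint from every $\tilde g_2$-geodesic asymptotic to it in both directions, contradicting that the two share endpoints at infinity together with the fact that in a no-conjugate-points surface geodesics asymptotic in both directions bound a flat strip of finite width (the genus $\ge 2$ hypothesis rules out infinite strips). The constant $K$ is then uniform by the same compactness/cocompactness argument — it depends only on the quasi-isometry constants, i.e.\ only on $g_1$ and $g_2$ — and the roles of $g_1,g_2$ are symmetric, giving the "vice versa."

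The main obstacle, to reiterate, is the lack of a Morse lemma in nonpositive (as opposed to strictly negative) curvature; the resolution is to never leave the geodesic-geometry category — work with shared endpoints at infinity and the flat-strip theorem on a higher-genus surface — rather than with abstract quasigeodesic stability. A secondary technical point is making the "boundary at infinity is metric-independent" statement precise for these singular metrics; here I would lean on the fact, used elsewhere in the paper, that these metrics are quasi-isometric to one another and to a hyperbolic background metric, so their visual boundaries are canonically identified with the Gromov boundary of $\pi_1(S)$.
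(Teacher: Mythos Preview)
You have overlooked the key simplification that the paper exploits: since $S$ has genus $\geq 2$, the group $\Gamma=\pi_1(S)$ is word-hyperbolic, and by the Milnor--\v{S}varc lemma each $(\tilde S,\tilde g_i)$ is quasi-isometric to $\Gamma$ and hence is itself a Gromov $\delta$-hyperbolic metric space. The Morse lemma (stability of quasi-geodesics) holds in \emph{any} $\delta$-hyperbolic space, not just in strictly negatively curved ones. Your worry that ``$\tilde g_2$ is only nonpositively curved, so quasigeodesics need not be stable'' conflates local curvature with coarse hyperbolicity: flat strips of bounded width are perfectly compatible with $\delta$-hyperbolicity. Once you observe this, the paper's proof is four lines: the identity is a quasi-isometry, a $\tilde g_1$-geodesic is minimizing (Corollary~\ref{cor:minimizing}) hence a $\tilde g_2$-quasi-geodesic, and the Morse lemma finishes it. Ironically, you invoke exactly this fact at the very end (``quasi-isometric \ldots\ to a hyperbolic background metric'') to identify the boundaries, without noticing it already gives you the Morse lemma.

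Your workaround argument also has a genuine gap. The flat-strip theorem you invoke applies to two geodesics \emph{for the same metric} that are bi-asymptotic; it says nothing about a $\tilde g_1$-geodesic $l$ and a $\tilde g_2$-geodesic $m$. And the limiting step is circular: translating points $p_n\in l$ with $d_{\tilde g_2}(p_n,m)\to\infty$ back into a fundamental domain, the translates $\gamma_n m$ escape to infinity, so in the limit you recover only some $\tilde g_1$-geodesic $l_\infty$. You can then write down a $\tilde g_2$-geodesic $m_\infty$ with the same endpoints, but you have no control on $d(l_\infty,m_\infty)$ --- that bound is precisely the statement you are trying to prove, so no contradiction results.
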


\begin{proof}
Let $\Gamma=\pi_1(S)$. Each of $(\tilde S, \tilde g_1)$ and $(\tilde S, \tilde g_2)$ are quasi-isomorphic to $\Gamma$ with a word metric, and hence are $\delta$-hyperbolic metric spaces for some $\delta$. There is a quasi-isometry $(\tilde S, \tilde g_1) \to (\tilde S, \tilde g_2)$. Since any $\tilde g_1$-geodesic is minimizing, it is a quasi-geodesic for $\tilde g_2$. The result follows from the stability of quasi-geodesics in $\delta$-hyperbolic spaces (see, e.g., \cite[III.H Theorem 1.7]{bh}).
\end{proof}

If $S$ is endowed with a metric $g$ in $\mathscr{M}_{ncpc}(S)$, then $(\tilde S, \tilde g)$ can be compactified by adding the boundary at infinity $\partial^\infty(\tilde S)$, which consists of equivalence classes of asymptotic geodesic rays. Note that since $\Gamma$ acts cocompactly on $\tilde S$, we can identify $\partial^\infty(\tilde S)$ with $\partial^\infty(\Gamma)$ where $\Gamma$ is endowed with the word metic for any finite generating set.

By Corollary \ref{cor:minimizing} all geodesics for $\tilde g\in\mathscr{M}_{ncpc}(S)$ are minimizing, so to any geodesic $l$ in $(\tilde S,\tilde g)$ we can associate $l(+\infty)$ and $l(-\infty)$, its forward and backward endpoints in $\partial^\infty(\tilde S)$. If $g$ has strictly negative curvature, any pair of distinct points $\{\xi_1, \xi_2\}$ in $\partial^\infty(\tilde S)$ specifies a unique geodesic in $(\tilde S,\tilde g)$. In nonpositive curvature, any two geodesics in the universal cover with the same endpoints in $\partial^\infty(\tilde S)$ bound a \emph{flat strip}, that is, an isometrically embedded copy of $\mathbb{R} \times [a,b]$ (\cite[Thm 4.1]{lgreen}). With only the no conjugate points assumption, they bound a (not necessarily flat) strip and through each (non-cone) point in this strip runs a unique geodesic with the same endpoints at infinity (\cite[Cor 3.1]{lgreen}). 

Let $\mathcal{G}_{\tilde g}$ be the set of images of $\tilde g$-geodesics. We endow $\mathcal{G}_{\tilde g}$ with the topology of convergence on compact sets. Let

\[ \mathscr{G}(\tilde S) = [(\partial^\infty(\tilde S) \times \partial^\infty(\tilde S))\setminus \Delta]/((x,y)\sim(y,x)) \]
(where $\Delta$ is the diagonal) be the set of unordered pairs of distinct points in the boundary of $\tilde S$.

\begin{defn}
Let $\partial_{\tilde g}: \mathcal{G}_{\tilde g} \to \mathscr{G}(\tilde S)$ send a geodesic $l$ to $\{l(+\infty),l(-\infty)\}$. 
\end{defn}
Note that $\mathcal{G}_{\tilde g}$ depends on the metric, while $\mathscr{G}(\tilde S)$ depends only on $\Gamma$. Due to Proposition \ref{prop:morse} (applied with $g_2$ of strictly negative curvature, for instance) $\partial_{\tilde g}$ is surjective. As noted above, it is injective away from geodesics contained in strips.

\begin{defn}
We say $\tilde g_1$-geodesic $l$ \emph{corresponds to} $\tilde g_2$-geodesic $l'$ if $\partial_{\tilde g_1}(l) = \partial_{\tilde g_2}(l').$
\end{defn}

\begin{defn}
Let $\mathcal{G}^\circ_{\tilde g}$ be the set of non-singular $\tilde g$-geodesics, i.e. those not hitting any cone point. Let $\mathcal{G}^*_{\tilde g} = \overline{\mathcal{G}^\circ_{\tilde g}}$.
\end{defn}

We now want to prove that $\mathcal{G}^\circ_{\tilde g}$ is non-empty, and to characterize what will turn out to be almost every geodesic in $\mathcal{G}^*_{\tilde g}$ (see Lemma \ref{lem:countable}). The first will be essential for our construction of the Liouville current (Section \ref{sec:liou}); the second is essential to use Bankovic-Leininger's characterization of the support of that current (Proposition \ref{prop:supp}).

\begin{prop}\label{prop:ae}
Let $g\in \mathscr{M}_{ncpc}(S)$. At any non-cone point $p\in \tilde S$ the set of geodesics through $p$ in $\mathcal{G}^\circ_{\tilde g}$ is full measure with respect to the angular measure.
\end{prop}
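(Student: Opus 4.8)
The plan is to show that the set of directions at $p$ generating singular geodesics is small in angular measure. A geodesic through $p$ is singular precisely when it hits some cone point $\zeta \in P$ either in forward or backward time. The key observation is that there are only countably many cone points in $\tilde S$ (since $P$ is finite and $\Gamma$ is countable), so it suffices to show that for each fixed cone point $\zeta \in \tilde S$, the set of unit vectors $v \in T_p^1\tilde S$ such that the geodesic $\gamma_v$ emanating from $p$ eventually hits $\zeta$ is a set of angular measure zero; then we take a countable union over all cone points in $\tilde S$, and also over the two time directions.

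First I would fix a cone point $\zeta \neq p$ and consider the set $A_\zeta = \{v \in T_p^1\tilde S : \gamma_v(t) = \zeta \text{ for some } t > 0\}$. The heart of the argument is that $A_\zeta$ is at most countable. Indeed, suppose $v, w \in A_\zeta$ are distinct, so $\gamma_v$ and $\gamma_w$ are two distinct geodesic segments from $p$ to $\zeta$. Concatenating $\gamma_v$ with the reverse of $\gamma_w$ gives a path from $p$ to $p$; more usefully, $\gamma_v$ and $\gamma_w$ together bound a region, and since they only meet at their endpoints $p$ and $\zeta$ (here I would invoke Proposition~\ref{prop:no two int}: two distinct geodesics intersect in at most one connected component, so two geodesic segments sharing both endpoints $p$ and $\zeta$ with the intersection being exactly $\{p, \zeta\}$ — a disconnected set — is only possible because of a cone point between them). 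By the injectivity of $\exp_p$ on $V_p$ (Lemma~\ref{lem:inj on vp}), no two geodesic segments from $p$ to $\zeta$ that avoid cone points in their interiors can have the same initial direction's ray, and moreover each such segment must ``go around'' cone points. I would argue that each segment from $p$ to $\zeta$ is determined by the (finite) combinatorial data of which cone points it passes on which side; since there are countably many cone points, there are only countably many such combinatorial types, and by Lemma~\ref{lem:inj on vp} / Proposition~\ref{prop:no two int} each type is realized by at most one geodesic, so $A_\zeta$ is countable, hence of angular measure zero.

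Then the set of all $v \in T_p^1\tilde S$ generating a singular geodesic (in forward or backward time) is $\bigcup_{\zeta} (A_\zeta \cup (-A_\zeta))$, a countable union over the countably many cone points $\zeta \in \tilde S$ of angular-measure-zero sets, hence itself of angular measure zero. Its complement — the set of directions generating non-singular geodesics — is therefore full measure, which is the claim. Along the way this also shows $\mathcal{G}^\circ_{\tilde g}$ is nonempty: pick any non-cone point $p$ and any direction outside this null set.

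The main obstacle I anticipate is making precise the claim that a geodesic segment from $p$ to $\zeta$ is determined by finite combinatorial data and that distinct directions give distinct segments. The subtlety is that $\exp_p$ is only defined on the star-shaped domain $V_p$ of directions whose geodesics avoid cone points before reaching $\zeta$, and a segment from $p$ to $\zeta$ has $\zeta$ as an endpoint, not an interior point — so it does lie in (the closure of) the relevant domain. I would need to check carefully that if two such segments have initial rays that are not equal, then the open region they cobound must contain a cone point in its interior (else Lemma~\ref{lem:inj on vp} is violated), and then run an induction on the number of enclosed cone points exactly as in the proof of Proposition~\ref{prop:no two int} to conclude that the ``combinatorial type'' (the cyclic/linear data of enclosed cone points and sides) pins down the segment uniquely. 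Handling the boundary case where a geodesic passes exactly through another cone point en route to $\zeta$ — making angle $\pi$ there — is the same kind of approximation argument used in Lemma~\ref{lem:inj on vp} and Proposition~\ref{prop:no two int}, and I would reuse it rather than redo it.
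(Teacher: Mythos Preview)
Your outline --- fix a cone point $\zeta$, bound the set $A_\zeta$ of directions at $p$ whose geodesic hits $\zeta$, then take the countable union over cone points and time directions --- is exactly the paper's approach. But you overcomplicate the bound on $A_\zeta$. The paper's proof is two sentences: by Proposition~\ref{prop:no two int}, two distinct geodesics through $p$ have intersection with at most one connected component; since they already meet at $p$ and diverge there (distinct tangent directions), $\{p\}$ is an entire component of the intersection, so they meet \emph{nowhere else}. Hence each $\zeta$ lies on at most one geodesic through $p$, i.e.\ $|A_\zeta|\leq 1$, not merely countable.

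You actually invoke Proposition~\ref{prop:no two int} yourself, but then misread its conclusion when you say that a disconnected intersection ``is only possible because of a cone point between them.'' There is no such exception: Proposition~\ref{prop:no two int} is proved precisely by induction on the number of intervening cone points and holds unconditionally for all $\tilde g$-geodesics, singular or not. Your proposed program of tracking combinatorial data (which cone points a segment goes around, on which side) and the induction you anticipate is exactly the content of the proof of Proposition~\ref{prop:no two int} --- you are effectively proposing to reprove that proposition inside this argument rather than simply citing it. The ``main obstacle'' you flag disappears once you apply Proposition~\ref{prop:no two int} as stated.
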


\begin{proof}
Two distinct vectors based at $p$ generate geodesics that never intersect elsewhere, by Proposition \ref{prop:no two int}, so each cone point lies on the geodesic generated by at most one such vector. There are only countably many cone points, so the proposition follows.
\end{proof}

\begin{prop}[cf. Prop 2.3, \cite{bl}]
Let $g\in \mathscr{M}_{ncpc}(S)$. If $l$ is a $\tilde g$-geodesic containing at most one cone point and making angle $\pi$ on one side at that point, then $l\in \mathcal{G}^*_{\tilde g}$.
\end{prop}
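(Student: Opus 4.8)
The plan is to approximate the geodesic $l$ by non-singular geodesics, using the freedom that comes from $l$ making angle $\pi$ on one side at its unique cone point $\zeta$ (and having no cone points otherwise). Since $l$ is straight (angle $\pi$) on one side at $\zeta$, we may perturb the ``incoming'' direction slightly while keeping the approximating geodesic close to $l$ near $\zeta$: a unit vector based at a point of $l$ just before $\zeta$, tilted by a small angle $\theta$, generates a geodesic $l_\theta$ that passes to one side of $\zeta$ without hitting it, and by the no-conjugate-points structure (Lemma~\ref{lem:inj on vp}, Proposition~\ref{prop:no two int}) this $l_\theta$ cannot intersect $l$ again, so it stays controlled. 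First I would make this precise: fix a non-cone point $p$ on $l$ far from $\zeta$, and for small $\theta$ let $l_\theta$ be the geodesic through $p$ whose tangent at $p$ makes angle $\theta$ with $l$'s, chosen on the side of $\zeta$ so that $l_\theta$ avoids $\zeta$. One must then argue $l_\theta$ avoids \emph{all} cone points for $\theta$ in a set accumulating at $0$ — this is exactly Proposition~\ref{prop:ae}: the set of directions at $p$ generating non-singular geodesics is full angular measure, hence dense, so we can pick $\theta_n \to 0$ with $l_{\theta_n} \in \mathcal{G}^\circ_{\tilde g}$.

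Next I would show $l_{\theta_n} \to l$ in the topology of convergence on compact sets. On one side of $\zeta$ (the angle-$\pi$ side and beyond $p$) this is the usual continuous dependence of geodesics on initial conditions away from cone points, since on that portion $l$ and the $l_{\theta_n}$ traverse a region meeting no cone points (shrinking $\theta$ pushes the point where $l_{\theta_n}$ passes $\zeta$ arbitrarily close to $\zeta$, but always on the straight side, so no angle defect is encountered). On the other side of $\zeta$ — past the cone point, where $l$ may turn by an angle in $(0,\pi)$ — I would instead emanate the approximating geodesics from a base point $q$ on $l$ on \emph{that} side, i.e. run the same argument backwards, obtaining non-singular geodesics converging to the other half of $l$. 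Since $l$ is determined by either of its two halves together with continuity at (a non-cone point near) $\zeta$, and since convergence on compact sets only needs to be checked on an exhausting sequence of compact arcs, splicing the two one-sided approximations (or taking a diagonal sequence of geodesics that agree with $l$ near $\zeta$ on the straight side and are non-singular) gives a sequence in $\mathcal{G}^\circ_{\tilde g}$ converging to $l$, so $l \in \mathcal{G}^*_{\tilde g} = \overline{\mathcal{G}^\circ_{\tilde g}}$.

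The main obstacle I expect is the second, ``bad'' side of $\zeta$: there $l$ genuinely bends, so a small perturbation of the incoming direction does \emph{not} automatically yield a nearby geodesic on compact sets past $\zeta$ — a geodesic that misses $\zeta$ on the outgoing side will, after a short distance, diverge from $l$ at a rate governed by the angle defect, not by $\theta$. The resolution is the observation above that we do not need a \emph{single} family of approximants tracking all of $l$; convergence on compact sets of a sequence of \emph{full} geodesics follows if we can, for each compact arc $A \subset l$, produce a non-singular geodesic uniformly $\epsilon$-close to $A$, and for arcs straddling $\zeta$ we exploit that $\zeta$ is the \emph{only} cone point of $l$ and the angle is $\pi$ on one side, so a geodesic launched from just past $\zeta$ on $l$ with a tiny outgoing perturbation is non-singular near $A$ and close to $l$ on $A$. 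This is precisely the content of the cited Proposition~2.3 of \cite{bl}, and the role of our Section~\ref{sec:geodesics} (non-self-intersection, one component of intersection, injectivity of $\exp_p$) is to guarantee the perturbed geodesics cannot wander back and spoil the approximation, so the argument goes through as in \cite{bl} with the Euclidean local model replaced by the no-conjugate-points one.
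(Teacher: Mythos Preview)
Your core idea --- perturb toward the angle-$\pi$ side and invoke Proposition~\ref{prop:ae} for density of non-singular directions --- is the same as the paper's. But you then manufacture an obstacle that is not there, and your proposed resolution does not actually escape it.

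The point you are missing is that because $l$ makes angle exactly $\pi$ on one side at $\zeta$, the geodesic $l$ is \emph{straight as seen from that side}: its forward and backward rays at $\zeta$ line up perfectly in the $\pi$-side half-plane. Hence a single perturbation toward the $\pi$-side (whether based at your point $p$, or, as the paper does, at a vector $v_n\to v^*$ with basepoint on the $\pi$-side of $l$ near $\zeta$) already produces a non-singular geodesic close to \emph{all} of $l$ on any compact set --- not only on the half containing the basepoint. There is no ``bad side past $\zeta$'' once the perturbation is to the $\pi$-side; both rays of $l_{\theta}$ converge to those of $l$ for the same reason, namely that neither ray of $l$ contains a cone point and the local picture on the $\pi$-side is smooth. (Incidentally, a geodesic through a cone point of angle $>2\pi$ makes angle $\geq\pi$ on \emph{both} sides, so $l$ never ``turns by an angle in $(0,\pi)$'' as you write.)

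Your splicing workaround is then unnecessary, and as stated it is circular: the geodesic you launch from $q$ just past $\zeta$ is a full bi-infinite geodesic, and you must still track it \emph{backward} through the vicinity of $\zeta$. Whether it stays close to $l$ there depends on precisely the same question --- which side of $\zeta$ it passes --- so you are forced back to the $\pi$-side observation anyway. Once you accept that observation, the two-basepoint construction and the diagonal argument can be deleted, and what remains is the paper's proof: take $v_n\to v^*$ on the $\pi$-side, tangent to non-singular geodesics (Proposition~\ref{prop:ae}), and note that both rays converge.
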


\begin{proof}
Let $\zeta$ be the cone point contained by $l$. Let $v^*$ be the tangent vector to $l$ at $\zeta$. For any sequence $(v_n)$ of vectors tangent to geodesics $(l_{v_n})$ in $\mathcal{G}^\circ_{\tilde g}$ with $v_n$ approaching $v^*$ via basepoints on the $\pi$-side of $l$, the forward ray of $l_{v_n}$ converges to the forward ray of $l$, since there are no cone points along that ray. The same is true for backward rays since there are again no cone points along those rays and the angle $l$ makes at that side of $\zeta$ is $\pi$. 

By Proposition \ref{prop:ae} almost every vector for the Lebesgue measure on $T^1(\tilde S-\tilde P)$ is tangent to a geodesic in $\mathcal{G}^\circ_{\tilde g}$. So a sequence answering the requirements of $(v_n)$ above exists.

\end{proof}



%

We close this section by proving a claim mentioned in the introduction -- that the volume in $T^1(S\setminus P)$ of flat strips for a metric in $\mathscr{M}_{npc}(S)$ is zero.

\begin{prop}\label{prop:flat strip volume}
Let $g$ be in $\mathscr{M}_{npc}(S)$ where $S$ has genus at least 2, and let $B$ be the set of all tangent vectors to flat strips in $T^1_g$. Then $vol_g(B)=0$ where $vol$ denotes the usual measure on $T^1_g$.
\end{prop}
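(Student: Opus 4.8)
The plan is to show that the set $B$ of vectors tangent to flat strips, which is clearly $\Gamma$-invariant, is also invariant under the geodesic flow and has a certain "rigidity" that forces its volume to be zero. First I would observe that if $v\in T^1(\tilde S\setminus\tilde P)$ is tangent to a flat strip, then the entire geodesic $l_v$ lies in that strip, so $B$ is geodesic-flow invariant; moreover the strip is determined by the pair of endpoints $\partial_{\tilde g}(l_v)\in\mathscr G(\tilde S)$, by \cite[Thm 4.1]{lgreen}. Thus, passing to the quotient, it suffices to bound the measure of the corresponding set in $T^1(S\setminus P)$, and by the flow-invariance and the structure of the Liouville/geodesic measure it is enough to bound the set of \emph{endpoint pairs} in $\mathscr G(\tilde S)$ that support a nondegenerate flat strip.

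The key step is a countability argument. I would argue that on a surface of genus $\geq 2$ there can be only countably many maximal flat strips in $\tilde S$. The reason is that each maximal flat strip $F$, being an isometrically embedded $\mathbb R\times[a,b]$ with $b>a$, has positive width, and the interiors of distinct maximal flat strips are disjoint (if two flat strips shared an interior point, the uniqueness of the geodesic through a non-cone point with given endpoints at infinity in nonpositive curvature would force them to have the same endpoints, hence to lie in a common strip, so their union is again a flat strip — contradicting maximality unless they coincide). Hence the maximal flat strips form a collection of pairwise interior-disjoint open sets in $\tilde S$; since $\tilde S$ is second countable, this collection is countable. Equivalently: $\Gamma$ acts cocompactly, and each $\Gamma$-orbit of a strip has width bounded below, while the strips in a fixed compact fundamental domain have total width bounded above, so only finitely many $\Gamma$-orbits of maximal strips occur, giving countably many strips downstairs.

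Once we know there are only countably many flat strips, each contributes a set in $T^1(S\setminus P)$ of the form "vectors tangent to geodesics lying in a fixed strip $\mathbb R\times[a,b]$." Such a set is a finite-area piece of a $2$-dimensional leaf inside the $3$-dimensional manifold $T^1(S\setminus P)$ — parametrized by the position across the strip and the direction of the flow, with the flow direction along each geodesic contributing only a $1$-parameter family while the leaf itself is $2$-dimensional — so it has Liouville measure zero. A countable union of null sets is null, giving $vol_g(B)=0$.

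The main obstacle I anticipate is making the "disjoint maximal strips are countable, hence $B$ is a countable union of lower-dimensional sets" argument fully rigorous at the level of measures: one must check that the map $B\to \mathscr G(\tilde S)$ sending $v$ to $\partial_{\tilde g}(l_v)$ has fibers of the right dimension and that the single fixed strip really does give a measure-zero subset of $T^1$ in the chart coordinates, i.e. that "tangent to a geodesic in a fixed flat $\mathbb R\times[a,b]$" is genuinely codimension one. This is essentially a local computation in Fermi-type coordinates on the strip and should be routine, but it is where the genus $\geq 2$ hypothesis (ruling out the torus, where $B$ could be everything) and the uniqueness-of-geodesics statements from \cite{lgreen} actually get used.
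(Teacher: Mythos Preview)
Your overall structure---reduce to countably many maximal flat strips, each contributing a measure-zero set---matches the paper's. The gap is in your countability argument.

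The claim that distinct maximal flat strips have disjoint interiors is false. Two maximal flat strips can cross at a nonzero angle: in the overlap region, the geodesic through a point $p$ lying in strip $F_1$ has endpoints $\partial_{\tilde g}(l_1)$, while the geodesic through the \emph{same} point $p$ lying in strip $F_2$ has a different pair of endpoints $\partial_{\tilde g}(l_2)$. There is no violation of geodesic uniqueness, since uniqueness is only for geodesics with the \emph{same} endpoints at infinity. Concretely, any nonpositively curved metric on $S$ containing a large flat disk has flat strips in many directions through the center of that disk, all overlapping. Your alternative ``total width bounded by the area of a fundamental domain'' argument fails for the same reason: overlapping strips do not partition the area.

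The paper confronts exactly this overlapping phenomenon. Assuming $vol_g(B)>0$, it obtains infinitely many maximal strips of width $>\delta$, extracts a subsequence $F_{i_j}$ converging to a limit maximal strip $G$, and observes that the $F_{i_j}$ overlap $G$ at small but \emph{nonzero} angles $\alpha_{i_j}$ (zero angle would let one extend the other, contradicting maximality). It then invokes an argument of Cao--Xavier to show that these nearby crossing strips force a flat half-strip along $G$ strictly wider than any half-strip $G$ supports, a contradiction. The remark following the proof notes that this argument actually runs under the weaker hypothesis ``uncountably many maximal flat strips''---so countability is a \emph{conclusion} of the Cao--Xavier step, not an input one can obtain by elementary disjointness.
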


\begin{proof}


Note that no flat strip may contain a cone point, so the argument necessarily takes place in $S\setminus P$.

Assume the contrary. Let $\mathcal{F}_g$ be the set of flat strips for $g$. We may assume that each strip in $F$ is maximal, in the sense that it cannot be extended to a flat strip with greater width. As $S$ has genus $>2$, such a maximum width must exist. The volume in $T^1(S\setminus P)$ of any single flat strip is zero, so if $B$ is to have positive volume, there must exist some $\delta>0$ such that there are infinitely many flat strips with width $>\delta$.

Let $(F_j)$ be a sequence of distinct (maximal) flat strips with width $>\delta$. There is a subsequence $(F_{i_j})$ converging to a maximal flat strip $G$ with width $>\delta$ in the sense that there is a tangent vector $v_G$ to the geodesic direction to the strip $G$ and a sequence of vectors $(v_{i_j})\to v_G$ such that $v_{i_j}$ is tangent to the geodesic direction at the center of the strip $F_{i_j}$. We may assume $G$ does not belong to the sequence $F_{i_j}$ by removing it if necessary. For sufficiently large $j$, $F_{i_j}$ and $G$ overlap, and so we can define the angle $\alpha_{i_j}$ between $v_{i_j}$ and $v_G$ using the fact that the strips are flat. We note that this angle cannot be zero, else $F_{i_j} \cup G$ would provide an extension of either $F_{i_j}$ or $G$, contradicting maximality.

Now consider the flat half-strips, i.e. isometric immersions of $\mathbb{R}_+ \times [a,b]$ which are obtained from restricting $G$ to $[L,\infty)$ in the geodesic coordinate, for any $L$. Again, each such flat half-strip is contained in a maximal width flat half-strip. Let $W_{max}$ be the supremum of all the widths of these. Since the genus of $S$ is at least 2, it is again clear that $W_{max}<\infty$. Pick some flat half-strip arising from $G$ with width at least $W_{max}-\frac{\delta}{16}$; call it $G^+$. Note that flat half-strips $F_{i_j}^+$ coming from the $F_{i_j}$ approach $G^+$, and that the angles between these flat half strips (for sufficiently large $j$) are $\alpha_{i_j}$.

Now it is possible to use an argument of Cao and Xavier in \cite{cao-xavier} (also discussed in \cite[Prop 3.1]{CS}) to produce from $G^+$ and the $F_{i_j}^+$ a flat half-strip along $G$ with width $\frac{\delta}{8}$ greater than the width of $G^+$, producing a contradiction. 

\end{proof}

\begin{rem}
The argument above also works under the weaker assumption that there are uncountably many maximal flat strips. The cone points play no role in the question -- in fact once you have infinitely many strips with width at least $\delta$, one could perform a surgery on $\frac{\delta}{3}$-neighborhoods of the cone points and produce a complete, nonpositively curved metric with infinitely many strips of width at least $\frac{\delta}{3}$ and proceed from there. Though Cao and Xavier's argument will not work under the weaker assumption of no conjugate points, the fact that we have not exploited the full strength of the positive volume assumption above gives some hope that the following question may have a positive answer.
\end{rem}

\begin{ques}
Let $g$ be a Riemannian metric on a closed surface of genus at least two without conjugate points. Does the set of all tangent vectors for strips have zero volume?
\end{ques}

%

\section{Geodesic currents}

Apart from the original work of Croke \cite{croke}, proofs of marked length spectrum rigidity for surfaces all rely on the fundamental work of Otal relating the marked length spectrum to geodesic currents. (See \cite{bonahon_ends, bonahon_geometry} for good references on geodesic currents.)

\begin{defn}
A \emph{geodesic current} on $(S,g)$ is a $\Gamma$-invariant Radon measure on $\mathscr{G}(\tilde S)$.
\end{defn}

Write $\mathscr{C}(S)$ for the set of geodesic currents on $S$. Again, this depends only on $\Gamma$, not the particular metric. We endow $\mathscr{C}(S)$ with the weak*-topology.

A first, and crucial, example is the following. Let $\gamma$ be a closed geodesic on $(S,g)$. The set of all lifts of $\gamma$ to $(\tilde S,\tilde g)$ maps under $\partial_{\tilde g}$ to a discrete, $\Gamma$-invariant set of points in $\mathscr{G}(\tilde S)$. Denote by $\langle \gamma \rangle$ the current given by the counting measure on this set.

We now record a few fundamental facts about geodesic currents. The first three theorems can be found in the work of Bonahon.

\begin{thm}[See \cite{bonahon_ends}, Proposition 4.2]
The set of real multiples of geodesic currents of the form $\langle \gamma \rangle$ for $\gamma\in\Gamma$ is dense in $\mathscr{C}(S)$.
\end{thm}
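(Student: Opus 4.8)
The plan is to reduce the statement to a density result about multi-curves (integer-weighted sums of closed geodesics) and then approximate those by rational, hence real, multiples of single closed geodesics. First I would recall that a geodesic current is a $\Gamma$-invariant Radon measure on $\mathscr{G}(\tilde S)$, and that $\mathscr{G}(\tilde S)$ carries a natural identification with the space of complete geodesics in $(\tilde S, \tilde g)$ for any fixed auxiliary hyperbolic metric $g$ on $S$; this lets me phrase everything in terms of geodesics on a fixed hyperbolic surface, which is the setting in which Bonahon works. The key structural input is that $\mathscr{C}(S)$, with the weak* topology, is the completion (as a cone) of the set of real-weighted multi-curves, which is exactly Bonahon's construction of geodesic currents as a compactification/completion of the weighted curve system.

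The argument I would carry out has three steps. Step one: show that finite $\Gamma$-invariant sums of the form $\langle\gamma\rangle$ (i.e. multi-curves) are dense in $\mathscr{C}(S)$. This is the heart of the matter and is where I would invoke Bonahon directly: any geodesic current $\mu$ can be approximated weak* by currents supported on finitely many $\Gamma$-orbits of points in $\mathscr{G}(\tilde S)$ with rational (or even integer) weights, because the $\langle\gamma\rangle$ span a weak*-dense subcone — this follows from the fact that closed geodesics are dense in the space of geodesics (their endpoint pairs, i.e. pairs of fixed points of hyperbolic elements of $\Gamma$, are dense in $\mathscr{G}(\tilde S)$) together with a partition-of-unity argument to match the measure $\mu$ on small dynamically-defined boxes by atomic measures on periodic orbits. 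Step two: observe that an integer-weighted multi-curve $\sum_i n_i \langle\gamma_i\rangle$ is, up to the $\Gamma$-action, a finite sum, and approximate it by a single $\langle\gamma\rangle$: take a long closed geodesic $\gamma$ that winds $n_i$ times near each $\gamma_i$ in turn (this uses the Anosov closing / specification property for the geodesic flow on a hyperbolic surface, or more elementarily the density of axes of hyperbolic elements and the fact that one can concatenate group elements), so that the normalized current $\frac{1}{N}\langle\gamma\rangle$ approximates the given multi-curve; equivalently, rational multiples of the $\langle\gamma\rangle$ are dense among multi-curves. Step three: combine — real multiples of the $\langle\gamma\rangle$ contain the rational multiples, which are dense among multi-curves, which are dense in $\mathscr{C}(S)$, so real multiples of $\{\langle\gamma\rangle : \gamma\in\Gamma\}$ are dense in $\mathscr{C}(S)$.

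The main obstacle is Step one: genuinely producing, for an arbitrary Radon measure $\mu$ on $\mathscr{G}(\tilde S)$, a sequence of periodic-orbit currents converging weak* to it. The cleanest route is not to reprove this but to cite Bonahon's Proposition 4.2 (as the theorem statement already does) — the work there sets up $\mathscr{C}(S)$ precisely so that this holds, realizing currents as limits of weighted geodesics. If one wanted a self-contained argument, the point would be that the weak* topology is metrizable on bounded sets (since $\mathscr{G}(\tilde S)$ modulo $\Gamma$ is compact), reducing convergence to checking integrals of finitely many compactly supported test functions, and then a box-counting / shadowing argument on the unit tangent bundle of the fixed hyperbolic metric produces periodic orbits equidistributing with the right weights. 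I would flag that everything here is intrinsic to $\Gamma$ and independent of the metrics $g_1, g_2$ under consideration, which is why this theorem can be quoted verbatim from the negatively curved (or even just hyperbolic) setting.
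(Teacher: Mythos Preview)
The paper does not prove this theorem at all; it is stated with a citation to Bonahon \cite{bonahon_ends} and no proof is provided in the text. Your proposal correctly identifies this --- you yourself note that ``the cleanest route is not to reprove this but to cite Bonahon's Proposition 4.2 (as the theorem statement already does)'' --- and that is precisely what the paper does.

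Your three-step sketch is a reasonable outline of how such a density result is established (and is in the spirit of Bonahon's argument), but it goes well beyond what the paper requires. The theorem is recorded here purely as background on geodesic currents, alongside the existence of the intersection form and Otal's Th\'eor\`eme 2, and the paper treats all of these as black boxes imported from the literature. Since $\mathscr{C}(S)$ and $\mathscr{G}(\tilde S)$ depend only on $\Gamma$ and not on the particular (possibly singular) metric, there is nothing to adapt to the cone-point setting, and the citation suffices.
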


\begin{thm}[See \cite{bonahon_ends}, \S III and \S IV]
There is a continuous, symmetric, bilinear form, called the \emph{intersection number}

\[ i: \mathscr{C}(S)\times \mathscr{C}(S) \to \mathbb{R} \]
which extends the geometric intersection number in the sense that for any two primitive elements $\gamma_1, \gamma_2\in \Gamma$, $i(\langle\gamma_1\rangle,\langle\gamma_2\rangle)$ is the number of intersections in $S$ of their geodesic representatives.
\end{thm}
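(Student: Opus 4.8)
The plan is to reconstruct Bonahon's construction of the intersection form and then verify the three asserted properties; symmetry and bilinearity will be immediate from the definition, so the real content is finiteness, the identification with geometric intersection numbers, and continuity. Since $\mathscr{C}(S)$ and $\mathscr{G}(\tilde S)$ depend only on $\Gamma=\pi_1(S)$ — a cocompact Fuchsian group, so that $\partial^\infty\tilde S\cong S^1$ — I would fix once and for all an auxiliary hyperbolic metric $g_0$ on $S$ and work with it throughout. Call a pair $(\{a,b\},\{c,d\})$ of elements of $\mathscr{G}(\tilde S)$ \emph{linked} if $c$ and $d$ lie in distinct components of $S^1\setminus\{a,b\}$; this is precisely the condition that the two corresponding $\tilde g_0$-geodesics cross transversally, necessarily in a single point. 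Let $\mathscr{DG}\subset\mathscr{G}(\tilde S)\times\mathscr{G}(\tilde S)$ be the resulting open, $\Gamma$-invariant set, and for $\mu,\nu\in\mathscr{C}(S)$ define
\[ i(\mu,\nu)=(\mu\times\nu)\bigl(\mathscr{DG}/\Gamma\bigr), \]
the total mass of the (well-defined, since $\mu\times\nu$ is $\Gamma$-invariant) quotient measure. Symmetry then follows from the involution exchanging the two factors, which preserves $\mathscr{DG}$ and interchanges $\mu\times\nu$ with $\nu\times\mu$, and bilinearity from $(\mu_1+t\mu_2)\times\nu=\mu_1\times\nu+t(\mu_2\times\nu)$.

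For finiteness I would fix a compact fundamental domain $D$ for the $\Gamma$-action on $\tilde S$. Each linked pair $(g,h)$ has a well-defined crossing point in $\tilde S$, so, after translating by a suitable element of $\Gamma$, one may represent $\mathscr{DG}/\Gamma$ — up to a $(\mu\times\nu)$-null set coming from pairs whose crossing point lies on the $\Gamma$-orbit of $\partial D$ — by the set of linked pairs whose crossing point lies in $D$. That set is contained in $K_D\times K_D$, where $K_D=\{l:l\cap D\neq\emptyset\}$ is a compact subset of the space of $\tilde g_0$-geodesics (the endpoint pairs of such $l$ form a compact subset of $\mathscr{G}(\tilde S)$). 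Hence
\[ i(\mu,\nu)\leq\mu(K_D)\,\nu(K_D)<\infty, \]
the finiteness of each factor being exactly the statement that geodesic currents are Radon.

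The identification with geometric intersection numbers is the easiest point. For primitive $\gamma\in\Gamma$, $\langle\gamma\rangle$ is the counting measure on the $\Gamma$-orbit of the endpoint pairs of the lifts of the closed $g_0$-geodesic in the class $\gamma$, so for primitive $\gamma_1,\gamma_2$ the product $\langle\gamma_1\rangle\times\langle\gamma_2\rangle$ is the counting measure on ordered pairs of lifts, and $i(\langle\gamma_1\rangle,\langle\gamma_2\rangle)$ counts the $\Gamma$-orbits of linked pairs $(\tilde\gamma_1,\tilde\gamma_2)$. Since two distinct geodesics in $\tilde S$ cross at most once — standard in negative curvature, cf. Proposition~\ref{prop:no two int} — each transverse intersection point of the two closed geodesics on $S$ yields exactly one such $\Gamma$-orbit and conversely, and there are only finitely many such points; this is the geometric intersection number. (When $\gamma_1=\gamma_2$ the same bookkeeping gives the self-intersection number, up to the usual factor of two.)

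Continuity for the weak* topology is the real content, and here I would follow Bonahon closely. If $\mu_n\to\mu$ and $\nu_n\to\nu$ then $\mu_n\times\nu_n\to\mu\times\nu$ weak*, and since $\mathscr{DG}/\Gamma$ is open this already gives lower semicontinuity, $\liminf_n i(\mu_n,\nu_n)\geq i(\mu,\nu)$. The difficulty is the reverse inequality: $\mathscr{DG}/\Gamma$ is \emph{not} compact — a linked pair crossing at angle $\theta$ degenerates as $\theta\to0$ — so one must rule out intersection mass escaping into this small-angle locus along the sequence. The geometric input is that in negative curvature two geodesics crossing at a small angle $\theta$ fellow-travel over an arc of length comparable to $1/\theta$, while no two distinct geodesics fellow-travel forever; quantifying this should give a tail estimate bounding the $(\mu_n\times\nu_n)$-mass of linked pairs that cross inside $D$ at angle $<\varepsilon$, which, together with weak* convergence on the relatively compact ``angle $\geq\varepsilon$, crossing point in a slightly shrunk domain'' region, closes the argument as $\varepsilon\to0$. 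I expect making this tail estimate uniform in $n$ to be the main obstacle: it is the one step where the bare Radon and $\Gamma$-invariance hypotheses, together with the negative curvature of the background metric, must be played off against the non-compactness of $\mathscr{DG}/\Gamma$.
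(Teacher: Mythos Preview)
The paper does not prove this theorem at all: it is stated as a background result and attributed in full to Bonahon (\cite{bonahon_ends}, \S III and \S IV), with no argument given. So there is nothing in the paper to compare your proposal against --- your sketch already goes well beyond what the paper does.

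For what it is worth, your outline is a faithful reconstruction of Bonahon's construction: defining $i(\mu,\nu)$ as the total mass of $\mu\times\nu$ on the quotient of the linked-pair locus $\mathscr{DG}/\Gamma$, reading off symmetry and bilinearity, bounding by $\mu(K_D)\nu(K_D)$ for finiteness, and identifying $i(\langle\gamma_1\rangle,\langle\gamma_2\rangle)$ with the count of $\Gamma$-orbits of crossing lifts. Your honest flagging of the continuity step is apt: that is indeed where the work lies in Bonahon's paper, and the small-angle tail estimate you describe is the right mechanism. If you want to pursue this further, the cleanest route is not to estimate angles directly but to exploit the fact that $\mathscr{DG}/\Gamma$ is a locally compact Hausdorff space on which the assignment $(\mu,\nu)\mapsto$ (quotient of $\mu\times\nu$) is weak*-continuous into Radon measures, and then to control the non-compactness by showing the total mass functional is continuous --- but in any case this is Bonahon's theorem, not something the present paper attempts to reprove.
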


Note that the intersection of two geodesics in $\mathcal{G}_{\tilde g}$ can be detected by the ordering of their endpoints around the boundary at infinity. In particular, intersection is independent of the choice of metric.

\begin{thm}[Implicit in \cite{bonahon_ends}, \S4.2]\label{thm:intersection length}
For $\gamma \in \Gamma$, let $\mathcal{I}(\tilde \gamma)$ be the subset of $\mathcal{G}_{\tilde g}$ consisting of all $\tilde g$-geodesics intersecting $\tilde \gamma$, the axis for $\gamma$. Let $I(\tilde \gamma)$ be any fundamental domain for the action of $\gamma$ on $\mathcal{I}(\tilde \gamma)$. Then for any $\mu \in \mathscr{C}(S)$

\[ i(\mu,\langle\gamma\rangle) = \mu(\partial_{\tilde g}(I(\tilde \gamma))). \]
\end{thm}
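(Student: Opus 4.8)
The plan is to prove Theorem \ref{thm:intersection length} by first establishing it for currents of the form $\langle\eta\rangle$ with $\eta\in\Gamma$ primitive, and then extending to all $\mu\in\mathscr{C}(S)$ by density and continuity. For the model case, recall that $i(\langle\eta\rangle,\langle\gamma\rangle)$ is by definition the geometric intersection number of the closed geodesics $\bar\eta$ and $\bar\gamma$ on $S$. First I would fix the axis $\tilde\gamma\subset\tilde S$ of $\gamma$ and describe $\mathcal{I}(\tilde\gamma)$ concretely: a $\tilde g$-geodesic $l$ crosses $\tilde\gamma$ precisely when the unordered pair $\{l(+\infty),l(-\infty)\}$ \emph{links} the pair $\{\gamma^+,\gamma^-\}$ in $\partial^\infty(\tilde S)\cong S^1$, i.e. the two endpoints of $l$ lie in the two distinct components of $\partial^\infty(\tilde S)\setminus\{\gamma^+,\gamma^-\}$. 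This is exactly the observation recorded just before the theorem statement that intersection is detected by the cyclic order of endpoints, and in particular $\partial_{\tilde g}(\mathcal{I}(\tilde\gamma))$ is the \emph{metric-independent} set $\mathcal{L}(\gamma)\subset\mathscr{G}(\tilde S)$ of boundary pairs linking $\{\gamma^+,\gamma^-\}$. One subtlety from the singular/nonpositively curved setting: since $\partial_{\tilde g}$ need not be injective on geodesics lying in strips, I should check that $l\mapsto\partial_{\tilde g}(l)$ carries $\mathcal{I}(\tilde\gamma)$ onto $\mathcal{L}(\gamma)$ and that the $\gamma$-action on $\mathcal{I}(\tilde\gamma)$ corresponds to the $\gamma$-action on $\mathcal{L}(\gamma)$, so that a fundamental domain $I(\tilde\gamma)$ maps to a fundamental domain for $\gamma$ acting on $\mathcal{L}(\gamma)$; strips intersecting $\tilde\gamma$ contribute a compact family of geodesics with the same pair of endpoints, and this does not affect the count below.

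Next I would compute the left side. The lifts of $\bar\eta$ that actually cross $\tilde\gamma$ inside a fixed fundamental domain for $\gamma$ acting on $\tilde S$ (equivalently, the intersection points of $\bar\eta$ and $\bar\gamma$ on the compact surface, counted with the finite geometric intersection number $N$) are in bijection with the $\gamma$-orbits of lifts $\tilde\eta'$ of $\bar\eta$ that link $\{\gamma^+,\gamma^-\}$. Hence $\langle\eta\rangle\big(\partial_{\tilde g}(I(\tilde\gamma))\big)$, being the counting measure on $\partial_{\tilde g}(\{\text{lifts of }\bar\eta\})$ evaluated on a fundamental domain for $\gamma$ inside $\mathcal{L}(\gamma)$, equals the number of such $\gamma$-orbits, which is exactly $N=i(\langle\eta\rangle,\langle\gamma\rangle)$. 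Care is needed that $\partial_{\tilde g}(\partial_{\tilde g}(I(\tilde\gamma)))$ — I mean the $\langle\eta\rangle$-mass — is unaffected by the finitely many lifts of $\bar\eta$ that might be \emph{equal} to $\tilde\gamma$ (possible only if $\bar\eta=\bar\gamma$, the trivial case) or asymptotic to it; generic position of distinct closed geodesics rules out tangential behavior in negative curvature, but in our setting two distinct geodesics can share an endpoint or run parallel in a strip, so I would note that such lifts form a measure-zero (indeed finite-per-fundamental-domain) set for the counting measure and handle them, or simply observe they do not link $\{\gamma^+,\gamma^-\}$ and so do not contribute. This matching of the two finite counts proves the identity for $\mu=\langle\eta\rangle$, and by $\mathbb{R}$-linearity of both sides for $\mu=c\langle\eta\rangle$.

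For the general case I would use that finite positive combinations of the $\langle\eta\rangle$ are weak*-dense in $\mathscr{C}(S)$ (the first cited theorem of Bonahon), that $\mu\mapsto i(\mu,\langle\gamma\rangle)$ is continuous (the bilinear intersection form), and that $\mu\mapsto\mu(\partial_{\tilde g}(I(\tilde\gamma)))$ is continuous on $\mathscr{C}(S)$ provided $\partial_{\tilde g}(I(\tilde\gamma))$ can be chosen to be a relatively compact set whose boundary has zero mass for the relevant currents — or, more robustly, a continuity set. The cleanest route is to pick $I(\tilde\gamma)$ so that $\partial_{\tilde g}(I(\tilde\gamma))$ is (up to a null set) an open relatively compact subset $\mathcal{L}(\gamma)/\gamma$-lift of $\mathscr{G}(\tilde S)$ whose topological frontier in $\mathscr{G}(\tilde S)$ consists of pairs with an endpoint at $\gamma^\pm$; since $\Gamma$-invariant Radon measures on $\mathscr{G}(\tilde S)$ give zero mass to such lower-dimensional frontier sets (no atoms at the finitely-many-per-orbit boundary fixed points, which follows from Radon-ness and $\Gamma$-invariance / $\gamma$-invariance forcing infinite mass if an atom were present), the evaluation map is continuous at every $\mu$. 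Passing to the weak* limit on both sides of the identity then gives the theorem in general. The main obstacle I anticipate is precisely this last point — verifying that the evaluation $\mu\mapsto\mu(\partial_{\tilde g}(I(\tilde\gamma)))$ is weak*-continuous, i.e. controlling the mass that a limiting current can concentrate on the frontier of the fundamental domain; everything else is bookkeeping with linking of boundary pairs and the definition of geometric intersection number. If a fully clean continuity-set argument proves awkward, an alternative is to approximate $\langle\gamma\rangle$ as well and use joint continuity of $i$, reducing to the purely combinatorial count for pairs of closed geodesics, where the statement is classical.
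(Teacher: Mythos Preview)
The paper does not supply its own proof of this theorem; it simply cites it as implicit in Bonahon's construction. In Bonahon's setup, the intersection form is \emph{defined} as the total mass of the product measure $\mu\times\nu$ on the space of linked pairs in $\mathscr{G}(\tilde S)\times\mathscr{G}(\tilde S)$, taken over a fundamental domain for $\Gamma$. When $\nu=\langle\gamma\rangle$, this product measure is supported on pairs $(\{x,y\},\tilde\gamma')$ with $\tilde\gamma'$ a lift of $\gamma$; since $\Gamma$ acts transitively on the lifts with stabilizer $\langle\gamma\rangle$, a fundamental domain is exactly $\{\text{pairs linking }\tilde\gamma\}/\langle\gamma\rangle$, and the formula drops out directly from the definition with no approximation argument needed. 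Your route via density of the $c\langle\eta\rangle$ and continuity of both sides is a legitimate alternative, but it is more circuitous: you have to re-prove for $\mu=\langle\eta\rangle$ what is essentially the definition, and then run a weak*-limit.

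There is one genuine slip in your continuity step. You claim that the topological frontier of $\partial_{\tilde g}(I(\tilde\gamma))$ in $\mathscr{G}(\tilde S)$ consists only of pairs with an endpoint at $\gamma^\pm$. That is not right: if you realize $I(\tilde\gamma)$ as the geodesics crossing a segment $[p,\gamma p)\subset\tilde\gamma$, the frontier also contains the one-parameter family of pairs corresponding to geodesics through the endpoint $p$ (and through $\gamma p$), which lie entirely inside $\mathcal{L}(\gamma)$ and have nothing to do with $\gamma^\pm$. Your atom argument does not kill these. The fix is easy---for all but countably many choices of $p$ this $1$-dimensional slice has $\mu$-measure zero, since a Radon measure on a $2$-dimensional set can charge at most countably many disjoint slices---but as written the argument has a gap. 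Bonahon's direct definition sidesteps this issue entirely, which is one concrete advantage of that approach over yours.
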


The following theorem of Otal is fundamental to the entire approach to MLS rigidity through currents.

\begin{thm}[\cite{otal}, Th\'eor\`eme 2]\label{thm:lengths determine liou}
For $\mu_1,\mu_2 \in \mathscr{C}(S)$, $\mu_1=\mu_2$ if and only if $i(\mu_1,\langle\gamma\rangle) = i(\mu_2,\langle\gamma\rangle)$ for all $\gamma\in \Gamma$. 
\end{thm}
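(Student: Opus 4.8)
The statement to prove is Theorem \ref{thm:lengths determine liou}: for $\mu_1,\mu_2 \in \mathscr{C}(S)$, $\mu_1 = \mu_2$ if and only if $i(\mu_1, \langle\gamma\rangle) = i(\mu_2, \langle\gamma\rangle)$ for all $\gamma \in \Gamma$.

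\textbf{Proof proposal.}

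The forward direction is immediate: if $\mu_1 = \mu_2$ then of course $i(\mu_1,\langle\gamma\rangle) = i(\mu_2,\langle\gamma\rangle)$ for every $\gamma$, since the intersection number is a function of the current. So the content is the converse, and the plan is to show that the functionals $\mu \mapsto i(\mu,\langle\gamma\rangle)$, as $\gamma$ ranges over $\Gamma$, separate points in $\mathscr{C}(S)$, equivalently that their common values determine the Radon measure $\mu$ on $\mathscr{G}(\tilde S)$.

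First I would invoke Theorem \ref{thm:intersection length}, which identifies $i(\mu,\langle\gamma\rangle)$ with $\mu(\partial_{\tilde g}(I(\tilde\gamma)))$, the $\mu$-mass of (the boundary image of) a fundamental domain for the $\gamma$-action on the set of geodesics crossing the axis of $\gamma$. The key geometric observation is that the sets $\partial_{\tilde g}(\mathcal{I}(\tilde\gamma))$ — the sets of pairs $\{\xi,\eta\} \in \mathscr{G}(\tilde S)$ that "link" the endpoint pair $\{\gamma(-\infty),\gamma(+\infty)\}$ — together with their $\Gamma$-translates, generate the topology of $\mathscr{G}(\tilde S)$: indeed $\mathscr{G}(\tilde S)$ is built from $\partial^\infty(\tilde S) \times \partial^\infty(\tilde S)$ minus the diagonal, and a basis of open sets there is given by products of small arcs in $\partial^\infty(\tilde S)$, each of which can be cut out by linking conditions against a pair of axes whose endpoints are chosen near the arc endpoints (the set of endpoint-pairs of elements of $\Gamma$ is dense in $\partial^\infty(\tilde S) \times \partial^\infty(\tilde S)$ because $\Gamma$ acts with north-south dynamics and the fixed points of hyperbolic elements are dense). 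So knowing $\mu$ on all such "linking sets" pins down $\mu$ on a generating family of Borel sets. Then I would use Theorem~\ref{thm:intersection length} to pass from the $\Gamma$-invariant measure $\mu$ and the values $i(\mu,\langle\gamma\rangle)$ on fundamental domains back to the mass of $\mu$ on each full linking set $\partial_{\tilde g}(\mathcal{I}(\tilde\gamma))$ — this works because the set of geodesics through a fundamental domain, summed over $\gamma^k$, exhausts $\mathcal{I}(\tilde\gamma)$ up to the $\gamma$-orbit structure and $\mu$ is $\Gamma$-invariant. Finally, a standard $\pi$-$\lambda$ / monotone class argument upgrades agreement on this generating family to agreement of the two Radon measures.

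Alternatively, and perhaps more cleanly, I would argue by density: by Bonahon's theorem the real multiples of currents $\langle\gamma\rangle$ are dense in $\mathscr{C}(S)$, and the intersection form is continuous and bilinear. If $i(\mu_1,\langle\gamma\rangle) = i(\mu_2,\langle\gamma\rangle)$ for all $\gamma$, then by linearity and continuity $i(\mu_1,\nu) = i(\mu_2,\nu)$ for all $\nu \in \mathscr{C}(S)$, in particular for $\nu = \mu_1 - \mu_2$ interpreted suitably; one then needs that $i(\cdot,\cdot)$ is nondegenerate enough to conclude $\mu_1 = \mu_2$. However, $i$ is not literally nondegenerate (e.g. $i(\langle\gamma\rangle,\langle\gamma\rangle)=0$ for simple $\gamma$), so this route still forces one back to the explicit description of $i(\mu,\langle\gamma\rangle)$ as a $\mu$-mass, which is why I expect the real proof to go through Theorem~\ref{thm:intersection length} as above.

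\textbf{Main obstacle.} The crux is the topological/measure-theoretic step: verifying that the linking sets $\partial_{\tilde g}(\mathcal{I}(\tilde\gamma))$ (for $\gamma \in \Gamma$) and their intersections/differences form a rich enough family — a generating $\pi$-system whose atoms shrink to points — so that the masses $\mu(\partial_{\tilde g}(\mathcal{I}(\tilde\gamma)))$ determine the Radon measure $\mu$ on $\mathscr{G}(\tilde S)$. This requires knowing that endpoint pairs of group elements are dense in $\partial^\infty(\tilde S)\times\partial^\infty(\tilde S)$ and controlling how finite Boolean combinations of linking sets approximate a basic open box in $\mathscr{G}(\tilde S)$; the bookkeeping to pass from $i(\mu,\langle\gamma\rangle)$ on a fundamental domain to $\mu$ on the whole $\gamma$-invariant linking set, using $\Gamma$-invariance of $\mu$, is the part that needs care.
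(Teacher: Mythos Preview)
The paper does not prove this theorem; it is quoted from Otal (\cite{otal}, Th\'eor\`eme 2) and used as a black box, so there is no ``paper's own proof'' to compare against.

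That said, your first approach is essentially Otal's: one shows that the values $\mu(\partial_{\tilde g}(I(\tilde\gamma)))$, together with $\Gamma$-invariance, determine $\mu$ on enough ``boxes'' $[a,b]\times[c,d]\subset\mathscr{G}(\tilde S)$ (with $a,b,c,d$ in the dense set of endpoints of axes) to pin down the Radon measure. You correctly flag the main technical point --- passing from the mass of a fundamental domain $I(\tilde\gamma)$ to the mass of small boxes requires choosing $\gamma$ with its fixed points placed appropriately and exploiting $\Gamma$-invariance carefully. Your second approach via density and bilinearity of $i$ does not work as stated, and you correctly diagnose why: the difference $\mu_1-\mu_2$ is only a signed measure, not an element of $\mathscr{C}(S)$, so it cannot be fed into the second slot of $i$; and $i$ is in any case degenerate on $\mathscr{C}(S)\times\mathscr{C}(S)$, so even extending bilinearly would not immediately yield $\mu_1=\mu_2$.
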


%

\section{The Liouville current}\label{sec:liou}

We follow Hersonsky-Paulin and Bankovic-Leininger in extending the Liouville current in the presence of cone points. It is defined as follows.

Let $k$ be an oriented geodesic segment for $\tilde g$ not hitting any cone points. Let $\mathcal{G}^\circ_{\tilde g}(k)\subset \mathcal{G}_{\tilde g}$ be the set of $\tilde g$ geodesics transversally intersecting the interior of $k$ which do not hit any cone point. We introduce coordinates $(t,\theta)$ on $\mathcal{G}^\circ_{\tilde g}(k)$ by letting $t(l)$ be the distance from the starting point of $k$ to its intersection with $l\in\mathcal{G}^\circ_{\tilde g}(k)$ and by letting $\theta(l)\in(0,\pi)$ be the angle between the forward direction of $k$ and $l$. Let $D^\circ(k)\subset [a,b]\times (0,\pi)$ be all the coordinate pairs arising this way. As there are only countably many cone points in $(\tilde S,\tilde g)$, $D^\circ(k)$ is a full Lebesgue measure subset of $[a,b]\times (0,\pi)$ (using Proposition \ref{prop:ae}). The bijection $D^\circ(k) \to \mathcal{G}_{\tilde g}^\circ(k)$ allows us to push forward the measure defined on $D^\circ(k)$ by

\[ \hat L_{g} = \frac{1}{2}\sin\theta d\theta dt \]
to a measure $\hat L_g(k)$ on $\mathcal{G}^\circ_{\tilde g}(k)$.

Cover $\mathcal{G}^\circ_{\tilde g}$, the set of $\tilde g$-geodesics not hitting any cone point, by sets of the form $\mathcal{G}^\circ_{\tilde g}(k)$. Let $G^\circ_{\tilde g}$ be the set of parametrized geodesics in $(\tilde S,\tilde g)$. We then have

\begin{prop}
The measures $\hat L_g(k)$ are transverse invariant measures on the foliation of $G^\circ_{\tilde g}$ by geodesics, and therefore define a global measure on $\mathcal{G}^\circ_{\tilde g}$, denoted by $\hat L_g$. This measure is $\Gamma$-invariant.
\end{prop}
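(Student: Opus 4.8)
The plan is to establish the two assertions in turn: first that the $\hat L_g(k)$ fit together into a well-defined global measure on $\mathcal{G}^\circ_{\tilde g}$, and then that this measure is $\Gamma$-invariant. For the gluing, the key point is that the coordinate expression $\frac{1}{2}\sin\theta\, d\theta\, dt$ is invariant under the change of coordinates induced by passing from one transversal segment $k$ to another segment $k'$, on the overlap $\mathcal{G}^\circ_{\tilde g}(k)\cap\mathcal{G}^\circ_{\tilde g}(k')$. The standard way to see this is to recognize $\frac{1}{2}\sin\theta\, d\theta\, dt$ as (a constant multiple of) the Liouville measure written in the coordinates adapted to $k$: on the unit tangent bundle $T^1(\tilde S\setminus\tilde P)$ with its canonical contact/Liouville volume $d\mathrm{vol}$, the geodesic flow is volume preserving, and the $1$-form obtained by contracting the generator of the geodesic flow with $d\mathrm{vol}$ descends to the quotient by the flow, i.e. to the space of (non-singular) oriented geodesics, as an invariant $2$-form; restricting to unoriented geodesics and computing in the $(t,\theta)$ coordinates attached to a transversal $k$ gives exactly $\frac{1}{2}\sin\theta\, d\theta\, dt$ up to normalization. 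First I would spell out this identification on a single $\mathcal{G}^\circ_{\tilde g}(k)$ (a routine computation with the cross-section of the geodesic flow given by vectors based on $k$, where $\sin\theta$ is the Jacobian factor accounting for the angle of incidence), and then invariance under change of transversal is immediate because both expressions compute the same flow-invariant quantity.

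Next I would address that Proposition \ref{prop:no two int} / Corollary \ref{cor:minimizing} guarantee the objects in question really are parametrized correctly: each non-singular geodesic meeting the interior of $k$ does so in exactly one point (no self-intersections, at most one component of intersection), so $(t,\theta)$ is genuinely a bijection $D^\circ(k)\to\mathcal{G}^\circ_{\tilde g}(k)$, and the overlaps $\mathcal{G}^\circ_{\tilde g}(k)\cap\mathcal{G}^\circ_{\tilde g}(k')$ consist precisely of geodesics crossing both segments transversally — no subtlety about a geodesic running along a transversal. I would also note that countably many such $\mathcal{G}^\circ_{\tilde g}(k)$ cover $\mathcal{G}^\circ_{\tilde g}$ (e.g. take $k$ ranging over a countable dense family of segments), so the local measures assemble, via the standard partition-of-unity / transverse-measure formalism, into a single Radon measure $\hat L_g$ on $\mathcal{G}^\circ_{\tilde g}$; this is the content of calling them ``transverse invariant measures on the foliation by geodesics.''

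Finally, $\Gamma$-invariance: each $\gamma\in\Gamma$ acts by a $\tilde g$-isometry of $\tilde S$, hence maps a transversal segment $k$ to another transversal segment $\gamma k$ of the same length and preserves distances along it and angles of incidence with crossing geodesics. Therefore $\gamma$ carries the coordinates $(t,\theta)$ on $\mathcal{G}^\circ_{\tilde g}(k)$ to the coordinates $(t,\theta)$ on $\mathcal{G}^\circ_{\tilde g}(\gamma k)$, and since $\hat L_g$ is defined by the same formula $\frac{1}{2}\sin\theta\, d\theta\, dt$ in every chart, $\gamma_*\hat L_g = \hat L_g$. Equivalently, once the measure is identified with the flow-invariant $2$-form descended from the isometry-invariant Liouville volume on $T^1(\tilde S\setminus\tilde P)$, $\Gamma$-invariance is automatic.

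I expect the main obstacle to be purely bookkeeping rather than conceptual: one must be careful that the ``foliation of $G^\circ_{\tilde g}$ by geodesics'' is only defined on the open full-measure set of non-singular vectors (the complement of vectors eventually or backward hitting a cone point is a countable union of lower-dimensional sets by Proposition \ref{prop:ae}), and that the transverse-measure gluing only needs to be checked on the non-singular locus, where everything is a genuine smooth foliation and the classical theory applies verbatim. Making precise that the cone points cause no trouble here — i.e. that removing a measure-zero set of geodesics does not affect the Radon measure one builds — is the one place where the singular setting differs from the smooth one, and it is handled exactly as in Hersonsky--Paulin and Bankovic--Leininger.
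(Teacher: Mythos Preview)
Your proposal is correct and follows essentially the same approach as the paper: both reduce to the classical smooth result (the Liouville measure as a transverse invariant measure for the geodesic foliation) and observe that cone points only affect a measure-zero set of geodesics, citing Hersonsky--Paulin for the adaptation to the singular setting. The paper's proof is terser---it simply invokes \cite[Prop~4.11]{hp} and the phrase ``countable cutting procedure''---whereas you spell out the identification of $\frac{1}{2}\sin\theta\,d\theta\,dt$ with the descended Liouville volume and make explicit the role of Proposition~\ref{prop:no two int} in ensuring the $(t,\theta)$ coordinates are well-defined; but the underlying argument is the same.
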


\begin{proof}
$\Gamma$-invariance follows from the definition of $\hat L_g$, provided the other parts of the proposition can be proven.

That the $\hat L_g(k)$ are transverse invariant measures on the foliation by non-singular geodesics follows from an argument pointed out by Hersonsky and Paulin \cite[Prop 4.11]{hp}: applying a countable cutting procedure to the well-known analogous result without singularities.
\end{proof}

\begin{defn}
The \emph{Liouville current} $L_g$ is the element of $\mathscr{C}(S)$ formed by extending $\hat L_g$ to all of $\mathcal{G}_{\tilde g}$ by setting $\hat L_g(\mathcal{G}_{\tilde g} \setminus \mathcal{G}^\circ_{\tilde g})=0$ and pushing forward $\hat L_g$ by $\partial_{\tilde g}$ to a measure on $\mathscr{G}(\tilde S)$.
\end{defn}

The following proposition gives the key relationship between this current and the geometry of $(S,g)$:

\begin{prop}\label{prop:length}
For any geodesic segment $k$ and $\mathcal{I}(k)$ the set of all $\tilde g$-geodesics intersecting $k$ transversally in its interior, $L_g(\partial_{\tilde g}(\mathcal{I}(k))) = length_g(k)$.
\end{prop}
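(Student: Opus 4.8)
The plan is to reduce the claim to the elementary computation behind the Crofton-type formula $\frac12\int_0^\pi\sin\theta\,d\theta = 1$, which already underlies the definition of $\hat L_g$. First I would verify that $\partial_{\tilde g}$ restricted to $\mathcal{G}^\circ_{\tilde g}(k)$ is injective, so that $L_g(\partial_{\tilde g}(\mathcal{G}^\circ_{\tilde g}(k)))$ is literally $\hat L_g(\mathcal{G}^\circ_{\tilde g}(k))$; this follows from Proposition~\ref{prop:no two int}, since two distinct non-singular geodesics through the same point of $k$ cannot share both endpoints at infinity (they would have to bound a strip, but they already intersect transversally at a point of $k$), and more generally any two geodesics of $\mathcal{G}^\circ_{\tilde g}(k)$ with the same endpoint pair coincide. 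Second, I would observe that $\mathcal{I}(k)$ and $\mathcal{G}^\circ_{\tilde g}(k)$ differ only by geodesics through one of the countably many cone points together with geodesics tangent to $k$ or passing through its endpoints; by the definition of $L_g$ (it vanishes off $\mathcal{G}^\circ_{\tilde g}$) and by Proposition~\ref{prop:ae}, this difference has $L_g$-measure zero, so $L_g(\partial_{\tilde g}(\mathcal{I}(k))) = \hat L_g(\mathcal{G}^\circ_{\tilde g}(k))$.

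Third, I would compute $\hat L_g(\mathcal{G}^\circ_{\tilde g}(k))$ directly from the definition. Using the coordinates $(t,\theta)$ on $\mathcal{G}^\circ_{\tilde g}(k)$, we have, writing $length_g(k) = b-a$,
\[
\hat L_g(\mathcal{G}^\circ_{\tilde g}(k)) = \iint_{D^\circ(k)} \frac12 \sin\theta\, d\theta\, dt = \int_a^b \left( \int_0^\pi \frac12 \sin\theta\, d\theta \right) dt = \int_a^b 1\, dt = b-a,
\]
where the middle equality uses that $D^\circ(k)$ has full Lebesgue measure in $[a,b]\times(0,\pi)$ (as recorded just before the definition of $\hat L_g(k)$), so the integral over $D^\circ(k)$ equals the integral over the full rectangle. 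Combining the three steps gives $L_g(\partial_{\tilde g}(\mathcal{I}(k))) = length_g(k)$.

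\textbf{Main obstacle.} The only genuinely nontrivial point is the injectivity/measure-zero bookkeeping in the first two steps: one must be careful that ``$\mathcal{I}(k)$'' as defined (all $\tilde g$-geodesics meeting the \emph{interior} of $k$ transversally) is, up to an $L_g$-null set, exactly parametrized by $D^\circ(k)$ with no multiplicity. Non-injectivity could only come from strips — two distinct geodesics with the same endpoints at infinity both crossing $k$ — but such geodesics, if they both meet the interior of $k$ transversally, would have to intersect each other (sharing both endpoints they bound a strip, and a strip's boundary geodesics are disjoint, contradiction with both crossing the short segment $k$ unless they coincide); alternatively one can note the set of geodesics lying in strips and crossing $k$ is contained in $\partial_{\tilde g}$-image of a Lebesgue-null set of $(t,\theta)$ by the no-conjugate-points structure theory (\cite[Cor 3.1]{lgreen}), hence $L_g$-null. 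I would present this carefully but expect it to be routine given the results of Section~\ref{sec:geodesics}. Everything else is the one-line trigonometric integral.
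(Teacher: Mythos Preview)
Your proposal is correct and takes exactly the paper's approach: the paper's entire proof is the single sentence ``This follows easily from integrating the local coordinate expression for $\hat L_g$,'' which is precisely your third step. You are simply more explicit about the integration and about the measure-zero bookkeeping the paper leaves implicit.

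One small correction to your ``Main obstacle'' discussion: the claimed contradiction --- that two distinct geodesics bounding a strip and both crossing $k$ transversally would have to intersect each other --- is not valid. Two parallel geodesics bounding a (flat) strip are disjoint, yet a transversal geodesic segment $k$ can perfectly well cross both of them, at different points of $k$. So non-injectivity of $\partial_{\tilde g}$ on $\mathcal{G}^\circ_{\tilde g}(k)$ can genuinely occur. Your alternative is the right fix: the set of $(t,\theta)\in D^\circ(k)$ corresponding to geodesics lying in strips has Lebesgue measure zero (for $g\in\mathscr{M}_{npc}(S)$ this follows from Proposition~\ref{prop:flat strip volume}; for $g\in\mathscr{M}_{ncpc}(S)$ it is the standing hypothesis of Theorem~\ref{main thm}), and likewise the extra geodesics in $\partial_{\tilde g}^{-1}(\partial_{\tilde g}(\mathcal{I}(k)))\setminus \mathcal{G}^\circ_{\tilde g}(k)$ all lie in strips and hence form an $\hat L_g$-null set. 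With that adjustment your argument is complete, and in fact more careful than the paper's.
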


\begin{proof}
This follows easily from integrating the local coordinate expression for $\hat L_g$.
\end{proof}

\begin{prop}\label{prop:int gamma}
For every $\gamma \in \Gamma$, 

\[ i(L_g,\langle\gamma\rangle) = length_g(\gamma). \]
\end{prop}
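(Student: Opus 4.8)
The plan is to reduce the statement $i(L_g,\langle\gamma\rangle) = length_g(\gamma)$ to the combination of Theorem~\ref{thm:intersection length} and Proposition~\ref{prop:length}. First I would recall that by Theorem~\ref{thm:intersection length}, $i(L_g,\langle\gamma\rangle) = L_g(\partial_{\tilde g}(I(\tilde\gamma)))$ where $I(\tilde\gamma)$ is a fundamental domain for the action of $\gamma$ on the set $\mathcal{I}(\tilde\gamma)$ of $\tilde g$-geodesics crossing the axis $\tilde\gamma$ of $\gamma$. So the task becomes computing this $L_g$-mass.

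Next I would choose the fundamental domain concretely: let $k$ be the segment of the axis $\tilde\gamma$ between a point $x$ and $\gamma\cdot x$, so that $length_g(k) = length_g(\gamma)$, and take $I(\tilde\gamma)$ to be (the geodesics through) $\mathcal{I}(k)$, the set of $\tilde g$-geodesics crossing the interior of $k$ transversally. One must check that this is genuinely a fundamental domain for the $\gamma$-action on $\mathcal{I}(\tilde\gamma)$ — i.e. that almost every geodesic crossing $\tilde\gamma$ crosses exactly one of the segments $\gamma^n\cdot k$. Geodesics tangent to $\tilde\gamma$ or crossing through an endpoint of $k$ (there are countably many of the latter per orbit) form an $L_g$-null set by construction of $L_g$ (it vanishes on singular geodesics and, via the $\frac12\sin\theta\,d\theta\,dt$ density, gives zero mass to the fibers over single points $t$). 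With that caveat, Proposition~\ref{prop:length} gives exactly $L_g(\partial_{\tilde g}(\mathcal{I}(k))) = length_g(k) = length_g(\gamma)$, and the result follows.

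The main point requiring care is the bookkeeping around the boundary between fundamental domains and around tangencies: one needs that the "double-counted" or "uncounted" geodesics — those passing through $x$ or $\gamma\cdot x$, or asymptotic to the axis endpoints — carry zero $L_g$-measure, and that a geodesic crossing $\tilde\gamma$ transversally at a single point crosses precisely one translate $\gamma^n\cdot k$. The first is immediate from the local coordinate form of $\hat L_g$ (a set of the form $\{t = t_0\}$ is Lebesgue-null in $[a,b]\times(0,\pi)$) together with the fact that $L_g$ is supported on non-singular geodesics; the second is a standard fact about geodesics and a translation axis in a CAT(0)/no-conjugate-points universal cover, using that distinct geodesics meet in at most one point (Corollary~\ref{cor:minimizing}, Proposition~\ref{prop:no two int}). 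I do not expect a serious obstacle here — the proposition is essentially the assembly of the two preceding results — but the cleanest exposition is to state the choice of $k$ as the axis segment for one translation period and invoke Proposition~\ref{prop:length} and Theorem~\ref{thm:intersection length} directly.
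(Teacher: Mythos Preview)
Your proposal is correct and follows essentially the same approach as the paper: both combine Theorem~\ref{thm:intersection length} with Proposition~\ref{prop:length} applied to a fundamental-domain segment of the axis. The only refinement the paper adds is to note that $\gamma$ is realized by a \emph{finite union} of geodesic segments (to accommodate the possibility that the axis passes through cone points, where the local $(t,\theta)$-coordinates for $\hat L_g$ are set up on segments avoiding cone points); your single-segment choice works once one either picks $x$ generically or subdivides $k$ at any cone points it contains.
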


\begin{proof}
Every $\gamma$ is realized by a finite union of geodesic segments. Then use Theorem \ref{thm:intersection length} and Proposition \ref{prop:length}.
\end{proof}

Combining these results, we get the following important result linking the marked length spectrum to the Liouville currents.

\begin{prop}\label{prop:preserves Liou}
Suppose that $g_1, g_2 \in \mathscr{M}_{ncpc}(S)$ have the same marked length spectrum. Then $L_{g_1} = L_{g_2}$.
\end{prop}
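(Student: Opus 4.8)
The plan is to deduce this directly from the machinery already assembled, using Otal's theorem (Theorem \ref{thm:lengths determine liou}) as the final lever. The strategy is: first express each $i(L_{g_j}, \langle \gamma \rangle)$ as a length using Proposition \ref{prop:int gamma}; then observe that the equality of marked length spectra identifies these intersection numbers across the two metrics; then invoke Theorem \ref{thm:lengths determine liou} to conclude $L_{g_1} = L_{g_2}$ as elements of $\mathscr{C}(S)$.

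Carrying this out: by Proposition \ref{prop:int gamma}, for every $\gamma \in \Gamma = \pi_1(S)$ we have $i(L_{g_1}, \langle \gamma \rangle) = length_{g_1}(\gamma)$ and $i(L_{g_2}, \langle \gamma \rangle) = length_{g_2}(\gamma)$. The hypothesis $l_{g_1} = l_{g_2}$ means exactly that $length_{g_1}(\gamma) = length_{g_2}(\gamma)$ for every conjugacy class, so $i(L_{g_1}, \langle \gamma \rangle) = i(L_{g_2}, \langle \gamma \rangle)$ for all $\gamma$. Here I should be slightly careful that $L_{g_1}$ and $L_{g_2}$ genuinely live in the \emph{same} space $\mathscr{C}(S)$: this is fine because $\mathscr{G}(\tilde S)$ and hence $\mathscr{C}(S)$ depend only on $\Gamma$ (equivalently on $\partial^\infty(\Gamma)$), not on the metric, as was noted when these objects were introduced; the Liouville current of each $g_j$ was defined by pushing forward along $\partial_{\tilde g_j}$ precisely into this common space. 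Then Theorem \ref{thm:lengths determine liou}, applied with $\mu_1 = L_{g_1}$ and $\mu_2 = L_{g_2}$, gives $L_{g_1} = L_{g_2}$.

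I do not expect any serious obstacle here — this proposition is essentially a bookkeeping corollary that packages the preceding propositions. The only point requiring a moment's thought is the one flagged above: making sure the two Liouville currents are being compared as measures on one and the same space, so that Otal's rigidity theorem for currents is applicable verbatim. Everything else is immediate from Proposition \ref{prop:int gamma} and the definition of having the same marked length spectrum. A clean one-paragraph proof should suffice:

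\begin{proof}[Sketch of the expected proof]
By Proposition \ref{prop:int gamma}, $i(L_{g_1},\langle\gamma\rangle) = length_{g_1}(\gamma) = l_{g_1}(\gamma)$ and $i(L_{g_2},\langle\gamma\rangle) = length_{g_2}(\gamma) = l_{g_2}(\gamma)$ for every $\gamma\in\Gamma$. Since $l_{g_1} = l_{g_2}$, we get $i(L_{g_1},\langle\gamma\rangle) = i(L_{g_2},\langle\gamma\rangle)$ for all $\gamma\in\Gamma$. Both $L_{g_1}$ and $L_{g_2}$ are elements of $\mathscr{C}(S)$, which depends only on $\Gamma$. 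Hence Theorem \ref{thm:lengths determine liou} applies and yields $L_{g_1} = L_{g_2}$.
\end{proof}
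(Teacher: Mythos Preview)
Your proposal is correct and follows exactly the same approach as the paper: apply Proposition \ref{prop:int gamma} to identify $i(L_{g_j},\langle\gamma\rangle)$ with $l_{g_j}(\gamma)$, use the hypothesis to equate these, and conclude via Theorem \ref{thm:lengths determine liou}. The extra remark you make about both currents living in the common space $\mathscr{C}(S)$ is a reasonable sanity check but not something the paper spells out.
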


\begin{proof}
By Theorem \ref{thm:lengths determine liou} we need only check that $i(L_{g_1},\langle\gamma\rangle) = i(L_{g_2},\langle\gamma\rangle)$ for all $\gamma \in \Gamma$. But this is provided by Proposition \ref{prop:int gamma} and equality of the marked length spectrum.
\end{proof}

%

\section{Cone points}\label{sec:cone}

In \cite{bl}, Bankovic and Leininger provide a novel way to detect cone points using the Liouville current. They work in the setting of nonpositively curved Euclidean cone metrics -- metrics which are locally Euclidean away from a finite set of cone points which have cone angles $>2\pi$. The reader can easily use the results of Section \ref{sec:geodesics} verify that the results in the first four sections of their paper hold for metrics in $\mathscr{M}_{ncpc}(S)$. The key steps are to replace their use of the $\mathrm{CAT}(0)$ assumption in a few places by results we proved in Section \ref{sec:geodesics}, and to replace references to `flat strips' by `strips,' using the work in \cite{lgreen} noted above.

Their identification of cone points relies on carefully studying the support of $L_g$. We restate their main results here.

\begin{defn}
Let $\mathcal{G}^2_{\tilde g} \subset \mathcal{G}^*_{\tilde g}$ be the set of geodesics in $\mathcal{G}^*_{\tilde g}$ containing at least two cone points.
\end{defn}

\begin{lem}[See \cite{bl}, Corollary 2.5]\label{lem:countable}
$\mathcal{G}^2_{\tilde g}$ is countable.
\end{lem}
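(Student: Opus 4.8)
The plan is to parametrize the geodesics in $\mathcal{G}^2_{\tilde g}$ by pairs of cone points and show that each such pair gives rise to at most finitely many (in fact essentially one) geodesic in $\mathcal{G}^*_{\tilde g}$. Since the cone set $P$ is finite, its preimage $\tilde P \subset \tilde S$ is countable, hence there are only countably many unordered pairs $\{\zeta_1,\zeta_2\}$ of points in $\tilde P$; countability of $\mathcal{G}^2_{\tilde g}$ will follow once we bound the number of geodesics through any fixed pair.

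First I would fix $l \in \mathcal{G}^2_{\tilde g}$ and let $\zeta_1, \zeta_2$ be two of the cone points it contains. The key structural input is Proposition \ref{prop:no two int}: any two distinct $\tilde g$-geodesics meet in at most one connected component. Apply this to $l$ and to any other geodesic $l'$ in $\mathcal{G}^2_{\tilde g}$ that also passes through both $\zeta_1$ and $\zeta_2$: their intersection contains both $\zeta_1$ and $\zeta_2$, so the connected component containing them must be the whole segment of $l$ (and of $l'$) between $\zeta_1$ and $\zeta_2$ — that is, $l$ and $l'$ share a subsegment. Now I would use the fact that geodesics in a no-conjugate-points cone metric are determined by an initial point and direction once we pass a cone point, together with the constraint coming from membership in $\mathcal{G}^*_{\tilde g} = \overline{\mathcal{G}^\circ_{\tilde g}}$: a limit of non-singular geodesics that passes through a cone point $\zeta$ must make angle $\pi$ on at least one side at $\zeta$ (this is exactly the mechanism in the Bankovic–Leininger-style arguments and in Proposition \ref{prop:ae}'s neighborhood). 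At $\zeta_1$ the geodesic $l$ continues in a direction that, together with the shared segment, is forced up to the finitely many choices of "angle-$\pi$ continuation" on either side; iterating past $\zeta_2$ and any further cone points, and using that a geodesic ray leaving the last cone point is determined by its footpoint and direction, one sees that the shared segment between $\zeta_1$ and $\zeta_2$ determines $l$ up to finitely many possibilities. Hence the set of geodesics in $\mathcal{G}^2_{\tilde g}$ through a fixed pair $\{\zeta_1,\zeta_2\}$ is finite, and
\[
\mathcal{G}^2_{\tilde g} = \bigcup_{\{\zeta_1,\zeta_2\}\subset \tilde P} \{l \in \mathcal{G}^2_{\tilde g} : \zeta_1,\zeta_2 \in l\}
\]
is a countable union of finite sets.

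A slightly different packaging, which may be cleaner, is: the map sending $l \in \mathcal{G}^2_{\tilde g}$ to the (finite or infinite) ordered list of cone points it passes through, together with a choice, at each cone point, of which side realizes angle $\pi$, is injective up to finite ambiguity — two geodesics with the same such data and sharing any nondegenerate subsegment coincide by the uniqueness statements of Section \ref{sec:geodesics}. Since $\tilde P$ is countable, the set of all such finite initial data is countable, giving the result. I expect the main obstacle to be the last injectivity point: pinning down precisely why, in the no-conjugate-points (as opposed to $\mathrm{CAT}(0)$) setting, the shared subsegment plus the angle-$\pi$ bookkeeping forces the two geodesics to agree globally — one must invoke Corollary \ref{cor:minimizing} (no self-intersections, geodesics minimizing) and Lemma \ref{lem:inj on vp} (injectivity of $\exp_p$ on $V_p$) carefully near each cone point, exactly as in the base case and inductive step of Proposition \ref{prop:no two int}, rather than anything new. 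The finiteness of $P$ and the countability of $\tilde P$ do all the rest of the work.
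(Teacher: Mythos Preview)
Your overall strategy---parametrize $\mathcal{G}^2_{\tilde g}$ by pairs of cone points and show each pair sits on at most finitely many such geodesics---is the same as the paper's, but you are missing the one structural fact that makes the finiteness step work. The paper's proof (following Bankovic--Leininger) uses that a geodesic in $\mathcal{G}^*_{\tilde g}$ containing more than one cone point must make angle $\pi$ on one side at \emph{every} cone point it meets, \emph{and that the side on which this angle $\pi$ lies switches at most once along the geodesic}. It is this ``at most one switch'' statement that reduces the data determining $l$ to a pair of cone points plus a bounded amount of side information.

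Your argument, by contrast, only invokes the first half (angle $\pi$ on some side at each cone point) and then says ``iterating past $\zeta_2$ and any further cone points \ldots\ one sees that the shared segment between $\zeta_1$ and $\zeta_2$ determines $l$ up to finitely many possibilities.'' This is where the gap is: a geodesic in $\mathcal{G}^2_{\tilde g}$ can pass through infinitely many cone points, and with a free binary side choice at each one you would get $2^{\aleph_0}$ continuations, not finitely many. The same problem recurs in your alternative packaging: the map you describe lands in sequences of (cone point, side) pairs, which is an uncountable set when the sequence is infinite, so injectivity into it proves nothing. You need to argue---as Bankovic and Leininger do---that being a \emph{limit of non-singular geodesics} forces all the side choices to agree except possibly at one transition; once you have that, the geodesic is determined by the segment where the switch happens (or any segment between consecutive cone points if there is no switch) together with the side data, and your counting goes through.
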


\begin{proof}
Bankovic and Leininger argue that containing more than one cone point and yet still lying in $\mathcal{G}^*_{\tilde g}$, that is, being the limit of non-singular geodesics, forces a geodesic to make angle $\pi$ on one side at every cone point with the side on which this angle lies switching at most once. As there are countably many cone points, the result follows.
\end{proof}

From its definition, the following is clear:

\begin{prop}\label{prop:supp}
$Supp(L_g) = \partial_{\tilde g}(\mathcal{G}^*_{\tilde g})$.
\end{prop}

Then by examining the support of $L_g$, specifically special sequences of points in $\partial_{\tilde g}(\mathcal{G}^*_{\tilde g})$, they prove the following theorem:

\begin{thm}[\cite{bl}, see Prop 4.5 and work in section 4]\label{thm:cone geodesics}
Let $g_1,g_2\in \mathscr{M}_{npc}(S)$ have the same marked length spectrum. Then there is a $\Gamma$-equivariant, bijective isometry $F_c:cone(\tilde g_1) \to cone(\tilde g_2)$ between the cone points of $(\tilde S,\tilde g_1)$ and those of $(\tilde S,\tilde g_2)$. Furthermore, this isometry is induced by a map $\mathcal{F}_c: \mathcal{G}^*_{\tilde g_1}(\zeta)\setminus\Omega\to\mathcal{G}_{\tilde g_2}$, for $\Omega$ countable, which takes the set of geodesics in $\hat{\mathcal{G}}^*_{\tilde g_1}\setminus \Omega$ passing through a cone point $\zeta$ of $\tilde g_1$ to geodesics passing through the cone point $F_c(\zeta)$ of $\tilde g_2$ in such a way that $\mathcal{F}_c(l)$ corresponds to $l$.
\end{thm}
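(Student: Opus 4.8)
The strategy is to take the statement of Bankovic and Leininger's theorem (their Prop 4.5 and the accompanying work in \S4) essentially verbatim and to explain why each ingredient they use carries over from the piecewise-Euclidean setting to $\mathscr{M}_{npc}(S)$. By Proposition \ref{prop:preserves Liou}, equality of marked length spectra gives $L_{g_1}=L_{g_2}$ as geodesic currents on $\mathscr{G}(\tilde S)$, and by Proposition \ref{prop:supp} the support of $L_{g_i}$ is exactly $\partial_{\tilde g_i}(\mathcal{G}^*_{\tilde g_i})$. So the first step is to observe that $\partial_{\tilde g_1}(\mathcal{G}^*_{\tilde g_1}) = \partial_{\tilde g_2}(\mathcal{G}^*_{\tilde g_2})$ as subsets of $\mathscr{G}(\tilde S)$, and that the Liouville measures themselves agree on this common support. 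This is the fixed combinatorial object on $\partial^\infty(\tilde S)\times\partial^\infty(\tilde S)$ from which everything about cone points must be extracted.

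Next I would recall Bankovic--Leininger's local picture of $\operatorname{Supp}(L_g)$ near a cone point: a geodesic $l$ through a cone point $\zeta$ with cone angle $>2\pi$ can be perturbed through non-singular geodesics on either of the two ``subangles'' at $\zeta$, so the endpoint pair $\partial_{\tilde g}(l)$ sits at a specific kind of ``corner'' or boundary stratum of $\operatorname{Supp}(L_g)$ — locally the support looks like the union of two transverse pieces meeting along the one-parameter family of geodesics emanating from $\zeta$. Cone points are thus detectable purely from the topology of $\operatorname{Supp}(L_g)$ inside $\mathscr{G}(\tilde S)$, together with the incidence structure ``two geodesics through the same cone point.'' Because this common support is metric-independent, any cone point $\zeta$ of $\tilde g_1$ produces such a structure in $\operatorname{Supp}(L_{g_1})=\operatorname{Supp}(L_{g_2})$, which must then be realized by a cone point $F_c(\zeta)$ of $\tilde g_2$; this defines the bijection $F_c:\operatorname{cone}(\tilde g_1)\to\operatorname{cone}(\tilde g_2)$, and $\Gamma$-equivariance is immediate since all the objects ($L_{g_i}$, $\operatorname{Supp}$, the incidence relations) are $\Gamma$-invariant and $\Gamma$ acts on $\operatorname{cone}(\tilde g_i)$ by isometries. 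The map $\mathcal{F}_c$ on geodesics is then the obvious one: a non-singular-limit geodesic $l$ through $\zeta$ (outside the countable bad set $\Omega$, which includes $\mathcal{G}^2_{\tilde g_1}$ from Lemma \ref{lem:countable} and the strips) is sent to the unique $\tilde g_2$-geodesic $l'$ with $\partial_{\tilde g_2}(l')=\partial_{\tilde g_1}(l)$; uniqueness off $\Omega$ follows from injectivity of $\partial_{\tilde g_2}$ away from strips. That this $l'$ actually passes through $F_c(\zeta)$ is exactly the content of the local-support analysis. Finally, the isometry statement — that $d_{\tilde g_2}(F_c(\zeta),F_c(\zeta')) = d_{\tilde g_1}(\zeta,\zeta')$ and cone angles match — comes from Bankovic--Leininger's argument reconstructing the distance between two cone points, and the angle at a cone point, from the $L_g$-measure of explicit families of geodesics (via Proposition \ref{prop:length}): distances and angles are read off as Liouville measures of boundary sets, and those measures agree.

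The real work, and the main obstacle, is the verification that Bankovic--Leininger's \S1--\S4 go through under only the $\mathscr{M}_{npc}(S)$ hypothesis rather than their piecewise-Euclidean (in particular $\mathrm{CAT}(0)$ with polyhedral structure) hypothesis. Concretely I would need to check: (i) every place they invoke $\mathrm{CAT}(0)$ convexity or uniqueness of geodesics is covered by Corollary \ref{cor:minimizing} and Proposition \ref{prop:no two int} (no two geodesics intersect, separate, and re-intersect); (ii) every reference to ``flat strips'' is replaced by ``strips'' using L. Green's results quoted in Section \ref{sec:geodesics}, and the countability Lemma \ref{lem:countable} replaces their Corollary 2.5; (iii) their perturbation arguments near a cone point, which in the Euclidean case use straight-line geometry, are replaced by the limiting arguments of Section \ref{sec:geodesics} (e.g. the proposition that a geodesic with one cone point making angle $\pi$ on one side lies in $\mathcal{G}^*_{\tilde g}$) — here one uses that near a non-cone point geodesics vary continuously with their initial vector and that rays avoiding cone points converge; and (iv) the quasi-isometry/$\delta$-hyperbolicity input (Proposition \ref{prop:morse}) replaces any use of a global product or $\mathrm{CAT}(0)$ boundary structure, which is fine since $\partial^\infty(\tilde S)\cong\partial^\infty(\Gamma)$ is metric-independent. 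I expect that once these substitutions are made, Bankovic--Leininger's proof of their Proposition 4.5 applies line-for-line; the only genuinely new content is bookkeeping the countable exceptional set $\Omega$ so that $\mathcal{F}_c$ is well-defined and $\mathcal{F}_c(l)$ corresponds to $l$ outside it.
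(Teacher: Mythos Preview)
Your proposal is correct and follows essentially the same approach as the paper: both use $L_{g_1}=L_{g_2}$ and equality of supports (Propositions \ref{prop:preserves Liou} and \ref{prop:supp}), invoke Bankovic--Leininger's analysis of $\operatorname{Supp}(L_g)$ to detect cone points and build the bijection $F_c$, define $\mathcal{F}_c$ via the endpoint correspondence off a countable bad set, and prove the isometry by reading off distances as Liouville measures of sets of crossing geodesics (Proposition \ref{prop:length}).

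The only noteworthy difference is in how the cone-point detection is described. You sketch it as a ``corner'' or ``boundary stratum'' where two transverse pieces of the support meet; the paper instead spells out Bankovic--Leininger's actual device, the $(L_g,\Omega)$-\emph{chain}: a sequence $(x_i)\subset\partial^\infty(\tilde S)$ with $\{x_i,x_{i+1}\}\in\operatorname{Supp}(L_g)\setminus\Omega$ and nothing in the support strictly between consecutive pairs. Chains encode the fact that geodesics through a cone point $\zeta$ making angle $\pi$ on one side are in $\mathcal{G}^*_{\tilde g}$, while anything ``between'' two such geodesics would have to make angle $>\pi$ at $\zeta$ and hence is not. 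This is a sharper and more directly usable formulation than the local-topology picture you give, and it is what makes the arguments in \cite{bl} (uniqueness of the cone point associated to a chain, the criterion for two chains to determine the same cone point) go through cleanly. Your verification checklist (i)--(iv) for transporting their \S1--\S4 to $\mathscr{M}_{npc}(S)$ is on target and matches the paper's remarks at the start of Section \ref{sec:cone}.
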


\begin{proof}[Outline of proof]
Bankovic and Leininger introduce the notion of $(L_g,\Omega)$\emph{-chains} (\cite[\S4]{bl}). An $(L_g,\Omega)$-chain is a sequence $(x_i)$ of points in $\partial^\infty(\tilde S)$ such that for all $i$,

\begin{itemize}
	\item $\{x_i,x_{i+1}\} \in supp(L_g)\setminus \Omega$, where $\Omega$ is countable, and
	\item $x_i, x_{i+1}, x_{i+2}$ is counterclockwise ordered and the set of all points in $supp(L_g)$ between $\{x_i,x_{i+1}\}$ and $\{x_{i+1},x_{i+2}\}$ (in the sense that a geodesic realizing this point is between geodesics realizing $\{x_i, x_{i+1}\}$ and $\{x_{i+1},x_{i+2}\}$) is only $\{ \{x_i,x_{i+1}\},\{x_{i+1},x_{i+2}\}\}.$ (See \cite[\S2.3]{bl} for the precise definition of `between.')
\end{itemize}
Figure \ref{fig:chain} illustrates a few steps in an $(L_g,\Omega)$-chain.

\begin{center}
\setlength{\unitlength}{.18mm}
\begin{picture}(660,200)(-220,-80)

\qbezier(40,0)(40,80)(120,80)
\qbezier(200,0)(200,80)(120,80)
\qbezier(40,0)(40,-80)(120,-80)
\qbezier(200,0)(200,-80)(120,-80)

\put(120,0){\circle*{6}}
\qbezier(120,80)(120,40)(120,0)
\qbezier(120,0)(80,-20)(45,-35)
\qbezier(120,0)(130,-20)(158,-75)
\qbezier(120,0)(190,30)(195,33)
\qbezier(120,0)(100,36)(80,72)

\put(120,85){$x_1$}
\put(25,-40){$x_2$}
\put(160,-85){$x_3$}
\put(198,35){$x_4$}
\put(70,82){$x_5$}

\put(110,-20){$\zeta$}

\end{picture}

\begin{figure}[h]
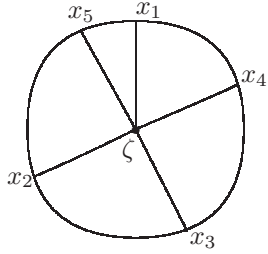

\caption{A few steps in an $(L_g,\Omega)$-chain. The angle between $[\zeta,x_i]$ and $[\zeta,x_{i+1}]$ is $\pi$.}\label{fig:chain}
\end{figure}

\end{center}

Let $\zeta$ be a cone point for $\tilde g$. Then there is a countable set $\Omega$ containing $\partial_{\infty}(\mathcal{G}^2_{\tilde g_i})$ for $i=1,2$ (thanks to Lemma \ref{lem:countable}), and a set of $(L_{g},\Omega)$-chains corresponding to geodesics passing through $\zeta$. In addition, $\zeta$ is uniquely specified by these chains in that given any $(L_{g},\Omega)$-chain, it corresponds to a sequence of $\tilde g$-geodesics through a unique cone point (\cite[Prop 4.1]{bl}). Furthermore, one can detect from conditions on $\partial^\infty(\tilde S)$ alone whether two $(L_g,\Omega)$-chains specify the same cone point (\cite[Lemma 4.4]{bl}). As the marked length spectrum determines $L_g$, and hence $supp(L_g)$, equality of marked length spectra for $g_1$ and $g_2$ implies that we have a bijective map $cone(\tilde g_1) \to cone(\tilde g_2)$ (see \cite[proof of Theorem 5.1]{bl}).

We now describe the map $\mathcal{F}_c$. Let $\zeta$ be a cone point for $\tilde g_1$. It is the unique cone point specified by an $(L_{g_1},\Omega)$-chain (see \cite[Lemma 4.2]{bl}). The $g_1$-geodesics realizing this chain correspond to the $\tilde g_2$-geodesics doing the same. As noted above, these $\tilde g_2$-geodesics pass through $F_c(p)$. This proves the last statement of the Theorem.

Finally, we want to show that $F_c$ is an isometry. Let $\zeta_1, \zeta_2 \in cone({\tilde g_1})$, and let $k$ be the $\tilde g_1$-geodesic segment between them. Consider the set $\mathcal{G}_{\tilde g_1}(k)$ of all $\tilde g_1$-geodesics $l$ intersecting $k$ transversally and such that $\partial_{\tilde g_1}(l) \notin \Omega$. Using the fact that $F_c$ is induced by the map $\mathcal{F}_c$ described above and the fact that cone points cannot lie in the interior of a flat strip, it is not hard to see that the $\tilde g_2$-geodesics with the same endpoints as those in $\mathcal{G}_{\tilde g_1}(k)$ are precisely those intersecting the $\tilde g_2$ geodesic segment $k'$ between $F_c(\zeta_1)$ and $F_c(\zeta_2)$ and not having endpoints in $\Omega$. As $\Omega$ is countable, the set of geodesics with endpoints in $\Omega$ is measure zero for any Liouville current. Then using Propositions \ref{prop:length} and \ref{prop:preserves Liou},

\begin{align}
	d_{\tilde g_1}(\zeta_1, \zeta_2) & = L_{g_1}(\partial_{\tilde g_1}(\mathcal{I}(k)))  = L_{g_1}(\partial_{\tilde g_1}(\mathcal{G}_{\tilde g_1}(k))) \nonumber \\
						& = L_{g_2}(\partial_{\tilde g_2}(\mathcal{G}_{\tilde g_2}(k')))  = L_{g_2}(\partial_{\tilde g_2}(\mathcal{I}(k'))) \nonumber \\
						& = d_{\tilde g_2}(F_c(\zeta_1),F_c(\zeta_2)). \nonumber
\end{align}
This completes the proof.

\end{proof}

\begin{rem}
As we will see below, the work of Fathi and Croke-Fathi-Feldman (using Otal's ideas) produces a similar isometry between points where the curvature is negative. Since cone points with angle $>2\pi$ are like point masses of negative curvature (as in the Gauss-Bonnet theorem with cone points), it is not surprising that this result would hold. What is particularly nice about Bankovic and Leininger's approach is its comparatively `low-tech' approach -- they only need to know the support of the Liouville current.
\end{rem}

Let $T^1_g$ be the subset of $T^1(\tilde S-\tilde P)$ consisting of unit tangent vectors for non-singular $\tilde g$-geodesics. Note that almost every vector in $T^1(\tilde S-\tilde P)$ (with respect to the usual volume) is in $T^1_g$.

\begin{prop}\label{prop:non-sing}
There is a volume zero, geodesic flow-invariant subset $B$ of $T^1_{g_1}$ such that the following is true. Let $l$ be a non-singular $\tilde g_1$-geodesic whose tangent vectors do not lie in $B$. Then no $\tilde g_2$-geodesic corresponding to $l$ is singular. Specifically, $B$ is the set of tangent vectors for non-singular geodesics which do not belong to a strip bordering a cone point.
\end{prop}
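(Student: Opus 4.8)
The plan is to identify the ``bad'' set $B$ geometrically and then show it is both small (volume zero) and flow-invariant, after which the corresponding statement about $\tilde g_2$-geodesics follows by using the dichotomy for geodesics sharing endpoints at infinity. First I would define $B$ to be the set of unit tangent vectors $v \in T^1_{g_1}$ such that $v$ is tangent to a non-singular $\tilde g_1$-geodesic $l$ which lies in a strip $\mathcal{S}$ one of whose boundary geodesics passes through a cone point (equivalently, a strip whose closure meets $\tilde P$). By construction $B$ is geodesic-flow-invariant, since lying in such a strip is a property of the whole geodesic $l$, not of the particular point on it. Note that, a priori, $B$ is contained in the set of all vectors tangent to geodesics lying in \emph{some} strip; but that larger set need not be volume zero for a metric in $\mathscr{M}_{ncpc}(S)$ unless we invoke the strip-volume hypothesis of Theorem~\ref{main thm}. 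Here, though, the relevant reference metric is $g_1$ only in the hypothesis, so we must argue that the \emph{sub}collection of strips bordering cone points is small using nonpositive curvature of $g_2$ — wait, $g_2\in\mathscr{M}_{npc}(S)$, so in fact strips for $g_2$ are flat strips, and Proposition~\ref{prop:flat strip volume} applies directly to $g_2$. The idea is to transport the problem to $g_2$.

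The key steps, in order, are as follows. (1) Fix a non-singular $\tilde g_1$-geodesic $l$ and let $l'$ be a $\tilde g_2$-geodesic corresponding to $l$, i.e. $\partial_{\tilde g_1}(l)=\partial_{\tilde g_2}(l')$; suppose for contradiction that $l'$ is singular, passing through a cone point $\zeta'$ of $\tilde g_2$. (2) Use Theorem~\ref{thm:cone geodesics} to locate the cone point $\zeta = F_c^{-1}(\zeta')$ of $\tilde g_1$ and the map $\mathcal{F}_c$: generic $\tilde g_2$-geodesics through $\zeta'$ correspond to $\tilde g_1$-geodesics through $\zeta$. Since $l'$ passes through $\zeta'$ but $l$ (being non-singular) misses $\zeta$, the pair $l, l'$ cannot both be ``generic'' in the sense of $\mathcal{F}_c$; more precisely, $l$ shares its endpoints with a $\tilde g_1$-geodesic through $\zeta$ only if $l$ lies in a strip one of whose boundary geodesics runs through $\zeta$ — this is exactly where the injectivity-away-from-strips statement for $\partial_{\tilde g_1}$ is used. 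So $l$ lies in a strip bordering the cone point $\zeta$, i.e. the tangent vectors of $l$ lie in $B$. (3) Conversely, if $v \notin B$ — i.e. $l$ is non-singular and lies in no strip bordering a cone point — then the corresponding $\tilde g_2$-geodesic is non-singular, which is the assertion. (4) Finally, show $\mathrm{vol}(B)=0$: vectors in $B$ are tangent to $\tilde g_1$-geodesics lying in strips, and each such strip corresponds (via $\partial_{\tilde g_1}=\partial_{\tilde g_2}$) to a strip for $g_2$, which is a \emph{flat} strip; a flat strip for $g_2$ bordering a cone point of $\tilde g_2$ is, in particular, a flat strip of $g_2$, so the full collection of such is covered by $B_{g_2}$, the set of tangent vectors to flat strips of $g_2$, which has $vol_{g_2}$-measure zero by Proposition~\ref{prop:flat strip volume}. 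Pulling back by the (bi-Lipschitz, hence measure-class-preserving on $T^1(\tilde S\setminus\tilde P)$) correspondence between non-singular geodesics gives $vol_{g_1}(B)=0$.

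The main obstacle I expect is step~(4): making precise the sense in which a volume-zero set of \emph{flat strips for} $g_2$ pulls back to a volume-zero set of tangent vectors for $g_1$. The correspondence $l \mapsto l'$ is only a measurable bijection between full-measure subsets of the two unit tangent bundles (defined off the singular geodesics and off strips, using $\partial_{\tilde g_i}$ and the a.e.\ injectivity), and it need not be smooth. One must check that it carries the Liouville/volume measure class of $T^1_{g_1}$ to that of $T^1_{g_2}$ — this should follow from $\partial_{\tilde g_1}$ and $\partial_{\tilde g_2}$ both pushing Liouville measure to the same measure on $\mathscr{G}(\tilde S)$ (Proposition~\ref{prop:preserves Liou}), together with the disintegration of $vol_{g_i}$ over $\mathcal{G}_{\tilde g_i}$ with one-dimensional Lebesgue (arclength) fibers, which are matched because $l$ and $l'$ are both unit-speed. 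A secondary subtlety is the bookkeeping of the various countable exceptional sets (the $\Omega$ of Theorem~\ref{thm:cone geodesics}, $\mathcal{G}^2_{\tilde g_i}$, the cone-hitting geodesics) — all of these project to measure-zero sets in $\mathscr{G}(\tilde S)$ for any Liouville current, so they can be absorbed into $B$ without affecting the volume-zero conclusion, but this should be stated carefully. Granting these measure-theoretic points, the geometric heart — that a non-singular geodesic corresponding to a singular one must lie in a strip through the relevant cone point — is an immediate consequence of Theorem~\ref{thm:cone geodesics} and the structure of strips from \cite{lgreen}.
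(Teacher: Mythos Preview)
Your step (2) has a genuine gap. You want to conclude that there exists a $\tilde g_1$-geodesic through $\zeta$ with the same endpoints $\{x,y\}$ as $l$, so that $l$ and this geodesic bound a strip. You appeal to (the reverse of) the map $\mathcal{F}_c$ from Theorem~\ref{thm:cone geodesics}, but that map is only defined on $\mathcal{G}^*_{\tilde g_2}(\zeta')\setminus\Omega'$ --- it does not apply to an arbitrary $\tilde g_2$-geodesic $l'$ through $\zeta'$. For instance, if $l'$ makes angle $>\pi$ on \emph{both} sides at $\zeta'$ (possible since the cone angle exceeds $2\pi$), then $l'\notin\mathcal{G}^*_{\tilde g_2}$ at all. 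More directly: there need not be any $\tilde g_1$-geodesic through $\zeta$ with endpoints $\{x,y\}$. If the rays $r_1,r_2$ from $\zeta$ to $x$ and to $y$ make angle $<\pi$ at $\zeta$, their concatenation is not a geodesic and no such geodesic exists, so your strip conclusion does not follow. Absorbing countable exceptional sets into $B$ does not fix this, since the obstruction is geometric, not measure-theoretic.

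The paper handles precisely this by a case split on that angle. If $r_1,r_2$ make angle $<\pi$ at $\zeta$ on the $l$-side, one finds an $(L_{g_1},\Omega)$-chain geodesic $\gamma$ through $\zeta$ making angle $\pi$ on the $l$-side and with endpoints lying strictly on the $\zeta$-side of $x$ and $y$ on the boundary circle. The corresponding $\tilde g_2$-geodesic through $F_c(\zeta)=\zeta'$ then separates $\{x,y\}$ from $\zeta'$ in $(\tilde S,\tilde g_2)$, so no $\tilde g_2$-geodesic with endpoints $\{x,y\}$ can pass through $\zeta'$ --- contradiction. Only in the remaining case (angle $\pi$ on the $l$-side) do $l$ and $r_1\cup r_2$ bound a strip bordering $\zeta$, putting the tangent vectors of $l$ into $B$. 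This separation argument works regardless of whether $l'$ itself is ``generic'' for $\mathcal{F}_c$.

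Your step (4) is also overcomplicated. The paper does not transport to $g_2$ at all: it observes that $B$ is contained in the set of tangent vectors to $\tilde g_1$-geodesics lying in strips, and this set has volume zero either by Proposition~\ref{prop:flat strip volume} (when $g_1\in\mathscr{M}_{npc}(S)$) or directly by the strip-volume hypothesis on $g_1$ in Theorem~\ref{main thm}. You seem to have overlooked that this hypothesis is available; invoking it makes your pullback argument and its measure-class worries unnecessary.
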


\begin{proof}
From Theorem \ref{thm:cone geodesics} we know that any cone point for $\tilde g_2$ is $F_c(\zeta)$ for $\zeta$ some cone point of $\tilde g_1$. Under our assumption, $\zeta$ does not belong to $l$.

Let $r_1$ and $r_2$ be the geodesic rays from $\zeta$ to the endpoints $\partial_{\tilde g_1}(l) = \{x,y\}$. If $r_1$ and $r_2$ make angle $<\pi$ at $\zeta$, then it is not hard to see that there is an $(L_{g_1},\Omega)$-chain through $\zeta$ containing the endpoints of a geodesic $\gamma$ through $\zeta$ which makes angle $\pi$ on the $l$ side of $\zeta$, and with endpoints distinct from $x$ and $y$ and on the $\zeta$ side of $x$ and $y$. (See Figure \ref{fig:non-cone}.) The corresponding geodesic passing through $F_c(\zeta)$ for the $(L_{g_2},\Omega)$-chain has the same endpoints and also makes angle $\pi$ on the side of $F_c(\zeta)$ to which $x$ and $y$ lie.  We then see that any $\tilde g_2$-geodesic connecting $x$ and $y$ cannot pass through $F_c(\zeta)$, as desired.

\begin{center}
\setlength{\unitlength}{.23mm}
\begin{picture}(660,180)(-160,-80)

\qbezier(40,0)(40,80)(120,80)
\qbezier(200,0)(200,80)(120,80)
\qbezier(40,0)(40,-80)(120,-80)
\qbezier(200,0)(200,-80)(120,-80)

\put(140,0){\circle*{6}}
\put(150,0){$\zeta$}

\qbezier(80,-73)(100,0)(80,73)
\qbezier(80,-73)(110,-36)(140,0)
\qbezier(80,73)(110,36)(140,0)

\put(75,0){$l$}
\put(105,20){$r_1$}
\put(105,-17){$r_2$}

\qbezier(120,80)(160,0)(120,-80)

\put(140,30){$\gamma$}

\put(75,80){$x$}
\put(75,-85){$y$}

\qbezier(140,10)(128,0)(140,-10)

\end{picture}

\begin{figure}[h]
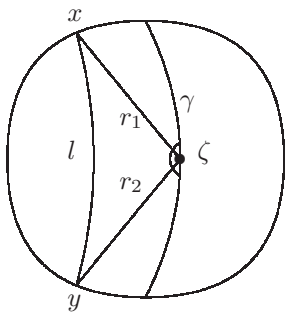

\caption{An $(L_{g_1},\Omega)$-chain geodesic $\gamma$ through $\zeta$ entirely to one side of $l$. The marked angle is $\pi$. There are uncountably many such $\gamma$ that can be drawn, so we can find one that avoids $\Omega$.}\label{fig:non-cone}
\end{figure}

\end{center}

Now suppose that $r_1$ and $r_2$ make angle $\pi$ on the $l$-side of $\zeta$. Then $l$ and $r_1\cup r_2$ bound a strip. Letting $B$ be the set of tangent vectors to non-singular geodesics belonging to strips bordering a cone point gives the result, using either Proposition \ref{prop:flat strip volume} if $g_1\in \mathscr{M}_{npc}(S)$ or the assumptions of Theorem \ref{main thm} if not.
\end{proof}

\begin{defn}
Let $\hat{\mathcal{G}}^\circ_{\tilde g_1} \subset \mathcal{G}^\circ_{\tilde g_1}$ be the set of all non-singular $\tilde g_1$-geodesics which do not belong to a strip bordering a cone point (as in Proposition \ref{prop:non-sing}). Let $\hat T^1_{g_1}$ be the set of all tangent vectors to geodesics in $\hat{\mathcal{G}}^\circ_{\tilde g_1}$.
\end{defn}


Proposition \ref{prop:non-sing} shows that no geodesic corresponding to $l$ is singular, so in fact no geodesic corresponding to $l$ can lie in a strip bordering a cone point. We thus have

\begin{cor}\label{cor:good geodesics}
Geodesics in $\hat{\mathcal{G}}^\circ_{\tilde g_1}$ correspond only to geodesics in $\hat{\mathcal{G}}^\circ_{\tilde g_2}$ and vice versa.
\end{cor}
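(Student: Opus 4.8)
\textbf{Proof proposal for Corollary \ref{cor:good geodesics}.}

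The plan is to read off the statement from Proposition \ref{prop:non-sing} together with the symmetry of the correspondence relation. First I would recall that "$l$ corresponds to $l'$" is symmetric: it is defined by $\partial_{\tilde g_1}(l)=\partial_{\tilde g_2}(l')$, so if $l'$ corresponds to $l$ then $l$ corresponds to $l'$. Since $\hat{\mathcal{G}}^\circ_{\tilde g_1}$ is exactly the set of non-singular $\tilde g_1$-geodesics whose tangent vectors avoid the set $B$ of Proposition \ref{prop:non-sing} (i.e. those not belonging to a strip bordering a cone point), that proposition directly says: for $l\in\hat{\mathcal{G}}^\circ_{\tilde g_1}$, no $\tilde g_2$-geodesic corresponding to $l$ is singular.

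Next I would upgrade "non-singular" to "in $\hat{\mathcal{G}}^\circ_{\tilde g_2}$", i.e. rule out that a corresponding $\tilde g_2$-geodesic $l'$ lies in a strip bordering a cone point. Suppose it did: then $l'$ would be asymptotic (on both ends) to the two boundary geodesics of that strip, and those boundary geodesics pass through a cone point $F_c(\zeta_2)$ of $\tilde g_2$; concretely the two geodesic rays from that cone point toward $\partial_{\tilde g_2}(l') = \partial_{\tilde g_1}(l)$ make angle $\pi$. Pulling this back via Theorem \ref{thm:cone geodesics} and the isometry $F_c$ — here one uses that the cone point $F_c(\zeta_2)$ is $F_c(\zeta)$ for a unique cone point $\zeta$ of $\tilde g_1$, and that the chain/angle data is transported by the correspondence — one finds that the rays from $\zeta$ toward $\partial_{\tilde g_1}(l)=\{x,y\}$ also make angle $\pi$ on the $l$-side, so $l$ itself bounds a strip with $r_1\cup r_2$ at the cone point $\zeta$; hence $l\notin\hat{\mathcal{G}}^\circ_{\tilde g_1}$, a contradiction. (This is exactly the dichotomy analyzed in the proof of Proposition \ref{prop:non-sing}: either the rays from the cone point make angle $<\pi$, in which case no corresponding $\tilde g_2$-geodesic passes through $F_c(\zeta)$ at all, or they make angle $\pi$ and $l$ sits in a strip bordering $\zeta$, which is excluded.) Finally, by the symmetry of "corresponds to", the roles of $\tilde g_1$ and $\tilde g_2$ can be interchanged — note both metrics are in $\mathscr{M}_{npc}(S)$, or the strip-volume hypothesis of Theorem \ref{main thm} is in force for $g_1$, but for the reverse direction one only needs the $\mathscr{M}_{npc}(S)$ conclusion of Proposition \ref{prop:flat strip volume} applied to $g_2$ — giving the "vice versa".

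The main obstacle is the second step: making precise that a corresponding $\tilde g_2$-geodesic lying in a cone-point-bordering strip forces $l$ to lie in a cone-point-bordering strip. This requires being careful that the strip's boundary geodesics really do run through a cone point (not merely near one) and that the angle-$\pi$ condition is the correct invariant transported by $F_c$ and $\mathcal{F}_c$; all of this is already contained in the proof of Proposition \ref{prop:non-sing}, so the corollary is genuinely a short deduction rather than new work.
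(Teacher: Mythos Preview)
Your first step---reading off from Proposition~\ref{prop:non-sing} that every $\tilde g_2$-geodesic corresponding to $l\in\hat{\mathcal{G}}^\circ_{\tilde g_1}$ is non-singular---is exactly right and matches the paper. The divergence is in your second step.

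The paper's argument for the upgrade from ``non-singular'' to ``in $\hat{\mathcal{G}}^\circ_{\tilde g_2}$'' is a one-liner that stays entirely on the $\tilde g_2$ side: if a corresponding $\tilde g_2$-geodesic $l'$ lay in a strip bordering a cone point, then the \emph{boundary} of that strip passing through the cone point is itself a $\tilde g_2$-geodesic with the same endpoints at infinity as $l'$, hence also corresponding to $l$---and it is singular. That directly contradicts Proposition~\ref{prop:non-sing}, which says \emph{no} $\tilde g_2$-geodesic corresponding to $l$ is singular. Done.

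Your route instead tries to transport the angle-$\pi$ condition from $F_c(\zeta)$ back to $\zeta$ and conclude that $l$ itself borders a cone-point strip in $\tilde g_1$. This can be made to work---essentially by taking the contrapositive of the first case in the proof of Proposition~\ref{prop:non-sing} (if the rays from $\zeta$ to $x,y$ made angle $<\pi$, no corresponding $\tilde g_2$-geodesic could pass through $F_c(\zeta)$, but the strip boundary does)---but your write-up leans on ``chain/angle data transported by the correspondence,'' which is vague and risks sounding like it needs the angle correspondence of Proposition~\ref{prop:angles}, not yet available at this point. The paper's observation avoids all of this: you never need to go back to $\tilde g_1$, because the contradiction is already visible in $\tilde g_2$.

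For the ``vice versa,'' your remark about symmetry is fine and is what the paper implicitly uses; the only asymmetry in the hypotheses (the strip-volume condition) is covered on the $g_2$ side by Proposition~\ref{prop:flat strip volume} since $g_2\in\mathscr{M}_{npc}(S)$.
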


%

\section{Angle correspondence}\label{sec:angles}

The following result is the key to proving a version of Theorem \ref{thm:cone geodesics} for points of negative curvature.

\begin{prop}\label{prop:angles}
There is a full measure set of $\theta\in(0,\pi)$ such that the following holds. Suppose that $l_1, l_2$ are $\tilde g_1$-geodesics in $\hat{\mathcal{G}}^\circ_{\tilde g_1}$ intersecting with angle $\theta$. Then any two corresponding $\tilde g_2$-geodesics $l'_1, l'_2$ intersect with angle $\theta$.
\end{prop}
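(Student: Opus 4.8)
\textbf{Proof proposal for Proposition \ref{prop:angles}.}

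The plan is to follow Otal's original argument (as adapted in Croke--Fathi--Feldman), using the Liouville current to encode angles between geodesics, and then to run a Fubini-type argument to find the full-measure set of good angles. The starting observation is that for a non-singular geodesic segment $k$ of $\tilde g_1$, Proposition \ref{prop:length} identifies $L_{g_1}(\partial_{\tilde g_1}(\mathcal{I}(k)))$ with $length_{g_1}(k)$, and more refined versions of this identity (integrating the local coordinate $\tfrac12 \sin\theta\, d\theta\, dt$) let one recover, from the Liouville current alone, not just lengths but the angular data along $k$. Concretely, for a geodesic $l_1 \in \hat{\mathcal{G}}^\circ_{\tilde g_1}$ and a point $p$ on it, one considers the family of geodesics $l_2$ crossing $l_1$ near $p$ and, by restricting the coordinate region $D^\circ(k)$ to angle subintervals $(\theta_0 - \epsilon, \theta_0 + \epsilon)$, one obtains quantities of the form $\int \int_{\{|\theta - \theta_0|<\epsilon\}} \tfrac12 \sin\theta\, d\theta\, dt$ that are expressible purely in terms of $L_{g_1}$ evaluated on subsets of $\partial_{\tilde g_1}(\mathcal{I}(k))$ cut out by conditions on $\partial^\infty(\tilde S)$. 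Since $L_{g_1} = L_{g_2}$ by Proposition \ref{prop:preserves Liou}, and since the subsets of $\mathscr{G}(\tilde S)$ involved are defined using only the cyclic order on $\partial^\infty(\tilde S)$ (hence metric-independent) together with Corollary \ref{cor:good geodesics} to guarantee that corresponding geodesics stay in $\hat{\mathcal{G}}^\circ$, the same angular quantities are computed by the $\tilde g_2$ side.

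The key steps, in order, would be: (1) Fix $l_1 \in \hat{\mathcal{G}}^\circ_{\tilde g_1}$ and a small non-singular transversal $k$ meeting $l_1$ at an interior point $p$; express, using the $(t,\theta)$-coordinates and $\hat L_{g_1}$, the "angular distribution function" $t \mapsto F_1(t,\theta) := $ the $\hat L_{g_1}$-measure of geodesics through points of $k$ before parameter $t$ making angle in $(0,\theta)$ with $k$. This is a metric-side object but, via $\partial_{\tilde g_1}$, equals $L_{g_1}$ of an explicit subset of $\mathscr{G}(\tilde S)$ determined by boundary-order conditions. (2) By $L_{g_1}=L_{g_2}$ and the metric-independence of these boundary conditions (plus Corollary \ref{cor:good geodesics}), deduce $F_1 = F_2$, where $F_2$ is the analogous object for the $\tilde g_2$-geodesic segment $k'$ corresponding to $k$ (i.e. with the same endpoints at infinity for $k$'s sub-geodesics — here one must be a bit careful since $k$ itself is a segment, not a complete geodesic, so one works with the complete geodesic $\hat k$ extending $k$ and its corresponding $\tilde g_2$-geodesic). (3) Differentiate: where $F_1$ (equivalently $F_2$) is differentiable in both $t$ and $\theta$ — which, by Lebesgue differentiation/Fubini, holds for a.e. $t$ along $\hat k$ and a.e. $\theta$ — the density recovers $\tfrac12 \sin\theta_1(t)$ on the $\tilde g_1$ side and $\tfrac12\sin\theta_2(t)$ on the $\tilde g_2$ side, where $\theta_i(t)$ is the crossing angle of the corresponding geodesics; hence $\sin\theta_1(t) = \sin\theta_2(t)$, and since angles lie in $(0,\pi)$, $\theta_1(t) = \theta_2(t)$. (4) Finally, upgrade "a.e.\ $\theta$ along a fixed transversal" to "a.e.\ $\theta \in (0,\pi)$ uniformly": cover $\hat{\mathcal{G}}^\circ_{\tilde g_1}$ by countably many such coordinate patches $\mathcal{G}^\circ_{\tilde g_1}(k_n)$, get for each $n$ a full-measure set $\Theta_n \subset (0,\pi)$ of good angles, intersect to get a single full-measure $\Theta = \bigcap_n \Theta_n$, and observe that any intersecting pair $l_1, l_2 \in \hat{\mathcal{G}}^\circ_{\tilde g_1}$ with angle $\theta \in \Theta$ has its intersection point lying on some $k_n$, so the conclusion applies.

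The main obstacle I expect is Step (3): making rigorous the passage from the equality of the integrated angular distribution functions to the pointwise equality of crossing angles of \emph{corresponding} geodesics. The subtlety is that the $\tilde g_2$-geodesics crossing $k'$ are indexed by their endpoints at infinity, and one must check that the $(t,\theta)$-parametrization on the $\tilde g_2$ side assigns to a given pair of boundary points a $t$-coordinate that varies monotonically and compatibly with the $\tilde g_1$ side — i.e. that "first $j$ geodesics along $k$" and "first $j$ geodesics along $k'$" pick out the same boundary-order data. This is where one leans on the fact that transversal intersection and betweenness are detected purely by the cyclic order on $\partial^\infty(\tilde S)$ (noted in the excerpt after Theorem on the intersection number), together with $\hat L_{g_1} = \hat L_{g_2}$ on non-singular geodesics; but care is needed because $k$ and $k'$ need not have corresponding endpoints, so one really should phrase everything in terms of the complete geodesics transverse to the complete extension of $k$, and handle the exceptional null sets (geodesics in strips, geodesics with endpoints in the countable set $\Omega$, geodesics through cone points) via Corollary \ref{cor:good geodesics} and Proposition \ref{prop:non-sing}. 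Once the bookkeeping of these null sets and the monotone reparametrization is set up correctly, the differentiation argument is routine.
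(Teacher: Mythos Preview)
Your approach is genuinely different from the paper's, and it has a real gap in Step~(3). The set you describe in Step~(1)---``geodesics crossing $k$ before parameter $t$ with angle in $(0,\theta)$''---is \emph{not} determined by boundary-order conditions alone: the condition ``angle $<\theta$'' is a $\tilde g_1$-metric condition. If you instead fix the set $A\subset\mathscr{G}(\tilde S)$ by boundary data (say, endpoints in prescribed arcs of $\partial^\infty(\tilde S)$), then $L_{g_1}(A)=L_{g_2}(A)$ holds, but expressing each side in its own $(t_i,\theta_i)$-coordinates and ``differentiating'' only tells you that the change of variables $(t_1,\theta_1)\mapsto(t_2,\theta_2)$ preserves the measure $\tfrac12\sin\theta\,d\theta\,dt$. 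That is a single Jacobian relation between two unknown functions; it does not force $\theta_2=\theta_1$. Concretely, the $\tilde g_2$-geodesics corresponding to the pencil through a single point $p$ on $k$ need not all pass through a single point of $k'$---indeed, that they do (at points of negative curvature) is exactly what this proposition is used to prove later---so the $t$- and $\theta$-coordinates mix under the correspondence, and your density comparison collapses.

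The paper's proof (following Otal, as adapted by Croke--Fathi--Feldman and Hersonsky--Paulin) supplies the missing rigidity by a global averaging and convexity argument. One defines $\theta'(v,\theta)$ as the $\tilde g_2$-angle between the geodesics corresponding to those generated by $v$ and $\theta\cdot v$, averages over $T^1_g(S)$ to get $\Theta'(\theta)$, and proves $\Theta'$ is increasing, symmetric about $\pi/2$, and \emph{superadditive}---the last via Gauss--Bonnet applied to the (possibly degenerate) $\tilde g_2$-triangle formed by three corresponding geodesics, using that $g_2$ has nonpositive curvature. A separate computation (Otal's Proposition~7) shows $\int_0^\pi F(\Theta'(\theta))\sin\theta\,d\theta\le\int_0^\pi F(\theta)\sin\theta\,d\theta$ for all convex $F$; a lemma then forces $\Theta'=\mathrm{Id}$, and the equality case in Jensen's inequality gives $\theta'(v,\theta)=\theta$ for a.e.\ $\theta$. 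The Gauss--Bonnet/superadditivity step and the convexity lemma are the essential ingredients your proposal lacks.
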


\begin{rem}
Note that the angle between $l_1'$ and $l_2'$ is well-defined. Indeed, by definition of $\hat{\mathcal{G}}^\circ_{\tilde g_1}$ and Proposition \ref{prop:non-sing}, the $l_i'$ are non-singular. Hence they intersect in a well-defined angle, and the problematic configurations in Figure \ref{fig:transversal} do not occur. Further, if $l_1'$ and $l_1''$ both correspond to $l_i$, then they bound a flat strip. The fact that this strip is flat ensures that the angle between $l_1'$ and $l_2'$ is the same as the angle between $l_1''$ and $l_2'$.
\end{rem}

\begin{center}
\setlength{\unitlength}{.18mm}
\begin{picture}(660,200)(-220,-100)
	
\qbezier(-220,0)(-220,80)(-140,80)
\qbezier(-220,0)(-220,-80)(-140,-80)
\qbezier(-60,0)(-60,80)(-140,80)
\qbezier(-60,0)(-60,-80)(-140,-80)

\qbezier(-218,-20)(-180,5)(-60,10)
\qbezier(-218,20)(-180,0)(-63,-20)

\put(-200,17){$l_1$}
\put(-200,-30){$l_2$}

\put(-40,0){\vector(1,0){60}}
\put(-45,10){`corresp.'}

\qbezier(40,0)(40,80)(120,80)
\qbezier(200,0)(200,80)(120,80)
\qbezier(40,0)(40,-80)(120,-80)
\qbezier(200,0)(200,-80)(120,-80)

\qbezier(42,-20)(55,-5)(95,0)
\qbezier(95,0)(100,0)(130,0)
\qbezier(45,30)(55,20)(95,0)

\put(95,0){\circle*{6}}
\put(130,0){\circle*{6}}

\qbezier(130,0)(150,5)(193,40)
\qbezier(130,0)(150,-15)(180,-60)

\put(60,22){$l_1'$}
\put(60,-24){$l_2'$}

\put(240,10){or}

\qbezier(300,0)(300,80)(380,80)
\qbezier(460,0)(460,80)(380,80)
\qbezier(300,0)(300,-80)(380,-80)
\qbezier(460,0)(460,-80)(380,-80)

\put(380,0){\circle*{6}}

\qbezier(300,-10)(340,-5)(380,0)
\qbezier(300,10)(340,5)(380,0)
\qbezier(450,-45)(420,-15)(380,0)
\qbezier(450,45)(420,15)(380,0)

\put(320,14){$l_1'$}
\put(320,-25){$l_2'$}

\end{picture}

\begin{figure}[h]
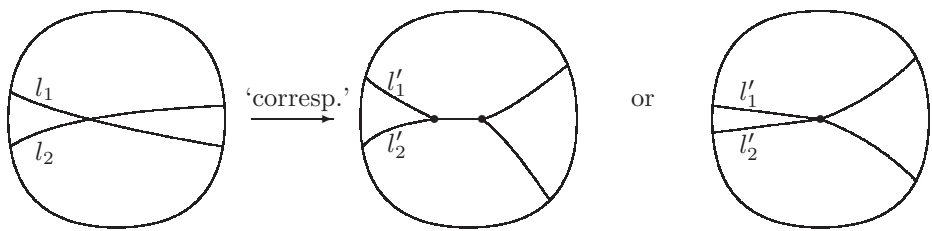

\caption{Transversal geodesics in $\hat{\mathcal{G}}_{\tilde g_1}^\circ$ and two pictures which do not occur for corresponding $\tilde g_2$-geodesics.}\label{fig:transversal}
\end{figure}

\end{center}

Proposition \ref{prop:angles} is implicit in the work of Otal, and has been reproven by Fathi and Hersonsky-Paulin (implicitly), and Croke-Fathi-Feldman (explicitly, see \cite[Lemma 1.5]{cff}). Each note that Otal's proof works in their situation with minor adjustments. The most significant adjustment comes in the case of cone singularities, and lies at the beginning of the argument. We describe this adjustment carefully here, then follow Otal and the subsequent presentations for the rest of the proof.

Let $\theta\in (0,\pi)$. Let $\theta\cdot v$ denote the rotation of $v$ by $\theta$ (using the fact that $S$ is oriented, this action commutes with that of $\Gamma$). Since $\hat T^1_{g_1}$ is full volume in $T^1_{g_1}$ and $\tilde P$ is countable, the set of $(v,\theta)$ such that both $v$ and $\theta \cdot v$ lie in $\hat T^1_g$ is full measure in $T^1_{g_1} \times (0,\pi)$ using the volume on the tangent bundle and the Lebesgue measure on $(0,\pi)$.

Let $(v,\theta) \in \hat T^1_{g_1}\times (0,\pi)$ be a pair such that $\theta\cdot v\in \hat T^1_{g_1}$. Consider the $\tilde g_1$-geodesics $l_v$ and $l_{\theta\cdot v}$ they generate in $\hat{\mathcal{G}}^\circ_{\tilde g_1}$. Under the correspondence with $\tilde g_2$-geodesics, these correspond to geodesics $l'_v$ and $l'_{\theta\cdot v}$ in $\hat{\mathcal{G}}^\circ_{\tilde g_2}$ (Corollary \ref{cor:good geodesics}). Let $\theta'(v,\theta)$ be the $\tilde g_2$-angle between $l'_v$ and $l'_{\theta\cdot v}$. Set $\theta'(v,0)=0$ and $\theta'(v,\pi)=\pi$.
By $\Gamma$-equivariance of the above construction, $\theta'(v,\theta)$ induces a map, also denoted $\theta'(v,\theta)$, almost everywhere on $\hat T^1_{g_1}(S)\times [0,\pi]$. The measure zero set where $\theta'(v,\theta)$ is undefined does not make any difference to the rest of Otal's argument, which is measure-theoretic from this point.

Note that the work of Bankovic and Leininger is crucial (via Proposition \ref{prop:non-sing}) to defining $\theta'(v,\theta)$ almost everywhere in the presence of cone points.  If we try to extend the definition to the problem cases shown in Figure \ref{fig:transversal} by, say, chosing one of the two natural `candidates' for the angle between $l'_v$ and $l'_{\theta \cdot v}$ then problems will arise when trying to prove Lemma \ref{lem:properties} below. Hersonsky and Paulin also have to address the well-definedness problem in their use of Otal's work. They provide a separate proof of the fact that the correspondence takes non-singular geodesics to non-singular geodesics (\cite[Lemma 4.15]{hp}) that uses their extensive work on the M\"obius current (also in \cite{hp}). The M\"obius current is defined using the crossratio and involves a fair bit of $\mathrm{CAT}(-1)$ technology, so it is not at all clear that something analogous can be done in the nonpositive curvature setting.

We now summarize the rest of Otal's argument to prove Proposition \ref{prop:angles}. For details of the proofs which are unaffected by our more general class of metrics, we refer to Otal's paper (\cite{otal}, particularly \S2 and \S3 through the proof of Lemme 8).

Let $\mu_g$ be the usual volume form on $T^1_g(S)$. Let $Vol(T^1_g(S))$ be the volume of $T^1_g(S)$ with respect to this measure.

\begin{defn}
Let 

\[ \Theta'(\theta)  = \frac{1}{Vol(T^1_g(S))} \int_{T^1_g(S)} \theta'(v,\theta) d\mu_g. \]
Since $\theta'(v,\theta)$ is defined for almost all $(v,\theta)$, this integral is valid for almost all $\theta$.
\end{defn}

The following properties are fairly straightforward. The third assertion follows from the Gauss-Bonnet theorem with singularities.

\begin{lem}\label{lem:properties}
Where defined, $\Theta'$ has the following properties:

\begin{itemize}
	\item[(1)] $\Theta':[0,\pi]\to[0,\pi]$ is increasing.
	\item[(2)] For all $\theta\in [0,\pi]$,
			\[ \Theta'(\pi-\theta) = \pi-\Theta'(\theta). \]
	\item[(3)] For all $\theta_1, \theta_2\in[0,\pi]$ such that $\theta_1+\theta_2 \in[0,\pi]$,
			\[ \Theta'(\theta_1+\theta_2) \geq \Theta'(\theta_1) + \Theta'(\theta_2). \]
\end{itemize}
\end{lem}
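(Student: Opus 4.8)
\textbf{Proof plan for Lemma \ref{lem:properties}.}

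The plan is to establish the three properties in turn, relying on the almost-everywhere defined angle function $\theta'(v,\theta)$ and the symmetry/additivity structure it inherits from being, pointwise, an honest angle in a nonpositively curved surface. For property (1), I would first argue the pointwise monotonicity: for fixed $v$ with $v, \theta_1\cdot v, \theta_2\cdot v \in \hat T^1_{g_1}$ and $\theta_1 < \theta_2$, the three geodesics $l'_v, l'_{\theta_1\cdot v}, l'_{\theta_2\cdot v}$ in $\hat{\mathcal{G}}^\circ_{\tilde g_2}$ all pass through a common point (the $\tilde g_2$-intersection point corresponding to the basepoint of $v$ — well-defined since these are non-singular geodesics meeting transversally by the Remark after Proposition \ref{prop:angles}), and their cyclic order around that point is inherited from the cyclic order of endpoints at infinity, which is preserved under the correspondence since intersection/betweenness is metric-independent. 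Hence $\theta'(v,\theta_1) \le \theta'(v,\theta_2)$ pointwise a.e., and integrating over $T^1_{g_1}(S)$ gives $\Theta'(\theta_1)\le\Theta'(\theta_2)$. (Strict monotonicity, if needed, follows because the inequality is strict on a positive-measure set; but monotone non-decreasing is what the later argument needs, and I would state only that.)

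For property (2), the key pointwise fact is that rotating $v$ by $\pi$ reverses the geodesic $l_v$, i.e. $l_{\pi\cdot v}$ and $l_v$ have the same image with opposite orientation, and that the supplementary angle identity $\theta'(v,\pi-\theta) = \pi - \theta'(-\theta\cdot v, \theta)$ (or the appropriate bookkeeping version) holds at each point where everything is defined, simply because in the target surface the angle between $l'_v$ and $l'_{(\pi-\theta)\cdot v}$ is supplementary to the angle between $l'_{(\pi-\theta)\cdot v}$ and $l'_{\pi\cdot v} = l'_v$ reversed. Then I would integrate, using that the volume $\mu_{g_1}$ is invariant under the flip $v \mapsto -v$ (and under all rotations $w \mapsto \psi\cdot w$), so that the integral defining $\Theta'(\pi-\theta)$ can be rewritten, via a measure-preserving change of variables on $T^1_{g_1}(S)$, as $\pi$ minus the integral defining $\Theta'(\theta)$. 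This is exactly Otal's computation and requires no change for cone points, since it all takes place among non-singular geodesics.

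Property (3) is the substantive one and is where I expect the main work. The pointwise inequality $\theta'(v,\theta_1+\theta_2) \le \theta'(v,\theta_1) + \theta'(\theta_1\cdot v, \theta_2)$ is an equality whenever the relevant geodesics through the common point are in "convex position," but in a merely nonpositively curved (rather than CAT($-1$)) surface the correspondence can collapse angles — two $\tilde g_1$-geodesics meeting at positive angle can correspond to $\tilde g_2$-geodesics sharing a subsegment, i.e. bounding a flat strip — which is precisely why one only gets an inequality and why one restricts to $\hat{\mathcal{G}}^\circ$. I would integrate the pointwise inequality and then use the rotation-invariance of $\mu_{g_1}$ to identify $\int \theta'(\theta_1\cdot v,\theta_2)\,d\mu_{g_1}(v)$ with $\int \theta'(v,\theta_2)\,d\mu_{g_1}(v) = Vol(T^1_{g_1}(S))\cdot\Theta'(\theta_2)$, yielding $\Theta'(\theta_1+\theta_2) \le \Theta'(\theta_1)+\Theta'(\theta_2)$ — wait: I must be careful with the direction. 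The claimed inequality is $\Theta'(\theta_1+\theta_2) \ge \Theta'(\theta_1)+\Theta'(\theta_2)$, which is the \emph{Gauss--Bonnet} input the paper flags: the deficit $\theta'(v,\theta_1)+\theta'(\theta_1\cdot v,\theta_2) - \theta'(v,\theta_1+\theta_2)$, integrated, equals the total negative curvature (including the cone-point contributions of angle excess, by Gauss--Bonnet with singularities) enclosed by the relevant geodesic triangles, which is $\le 0$; hence the integrated version of the reversed inequality. The main obstacle is getting this Gauss--Bonnet bookkeeping exactly right in the presence of cone points — confirming that cone points of angle $>2\pi$ contribute with the correct (curvature-like, i.e. "negative") sign to the angle-defect integral, so that the inequality goes the right way — but since no geodesic in $\hat{\mathcal{G}}^\circ$ passes through a cone point, the triangles involved have cone points only in their interiors, where Gauss--Bonnet with cone singularities applies cleanly, and I would cite that and defer the remaining details to Otal's \S2--\S3.
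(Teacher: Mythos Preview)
The paper itself defers entirely to Otal and Hersonsky--Paulin for this lemma, so you are supplying more detail than the paper does. Your sketches for (2) and (3) are essentially the Otal argument and are correct in outline: property (2) is the supplementary-angle identity integrated against the rotation-invariant volume, and property (3) is Gauss--Bonnet (with cone-point contributions of the correct sign, as you note) applied to the $\tilde g_2$-triangle formed by $l'_v,\ l'_{\theta_1\cdot v},\ l'_{(\theta_1+\theta_2)\cdot v}$, then integrated and simplified by rotation-invariance of $\mu_{g_1}$.

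Your argument for (1), however, contains a circularity. You assert that $l'_v,\ l'_{\theta_1\cdot v},\ l'_{\theta_2\cdot v}$ ``all pass through a common point (the $\tilde g_2$-intersection point corresponding to the basepoint of $v$),'' citing the Remark after Proposition~\ref{prop:angles}. But that Remark only guarantees that the angle between a single \emph{pair} of such geodesics is well-defined; it says nothing about three of them being concurrent in $(\tilde S,\tilde g_2)$. Concurrency of corresponding geodesics is precisely what is established \emph{later}, in Section~\ref{sec:neg}, and that argument uses Proposition~\ref{prop:angles}, which in turn relies on the present Lemma. So you cannot assume it here. The easy repair is to note that (1) follows from (3): for $\theta_1<\theta_2$ write $\theta_2=\theta_1+(\theta_2-\theta_1)$ and apply superadditivity together with $\Theta'\ge 0$. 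Alternatively, one proves (1) via the same Gauss--Bonnet mechanism as (3), applied to the (possibly degenerate) $\tilde g_2$-triangle formed by the three corresponding geodesics --- that triangle is exactly the obstruction to concurrency, and its nonpositive total curvature is what yields the inequality.
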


\begin{proof}
See \cite[Prop. 6]{otal}, or \cite[Prop. 4.16]{hp}. For Otal, $\Theta'$ is a homeomorphism, but Hersonsky-Paulin note that only measurability is needed.
\end{proof}

By the Jensen inequality, for any real-valued, strictly convex function $F$ on $[0,\pi]$, for every $\theta$,

\begin{equation}
 	F(\Theta'(\theta)) \leq \frac{1}{Vol(T^1_g(S))} \int_{T^1_g(S)} F(\theta'(v,\theta)) d\mu_g \label{Jensen}
\end{equation}
with equality for all $F$ if and only if $v \mapsto \theta'(v,\theta)$ is constant. Integrating with respect to $\sin\theta d\theta$ and applying Fubini, we have

\begin{equation}
	\int_0^\pi F(\Theta'(\theta)) \sin \theta d\theta \leq \frac{1}{Vol(T^1_g(S))} \int_{T^1_m(S)}\Big( \int_0^\pi F(\theta'(v,\theta) \sin\theta d\theta \Big) d\mu_g. \label{eqn:Jen int}
\end{equation}

\begin{prop}
For any convex function $F$,

\[\int_0^\pi F(\Theta'(\theta)) \sin\theta d\theta \leq \int_0^\pi F(\theta) \sin\theta d\theta \]
\end{prop}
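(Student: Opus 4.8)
The plan is to start from inequality \eqref{eqn:Jen int} and massage the right-hand side so that it collapses to the integral of $F(\theta)\sin\theta$. The key observation is that, for a \emph{fixed} non-singular vector $v$ giving a $\tilde g_1$-geodesic $l_v$ in $\hat{\mathcal G}^\circ_{\tilde g_1}$ and its corresponding non-singular $\tilde g_2$-geodesic $l_v'$, the inner integral
\[
	\int_0^\pi F\big(\theta'(v,\theta)\big)\sin\theta\, d\theta
\]
should be understood as an integral along $l_v'$ against the Liouville measure of $\tilde g_2$. Concretely: as $\theta$ ranges over $(0,\pi)$, the geodesic $l_{\theta\cdot v}$ sweeps out all $\tilde g_1$-geodesics through the basepoint of $v$, and correspondingly $l'_{\theta\cdot v}$ sweeps out (almost all) $\tilde g_2$-geodesics crossing a small $\tilde g_2$-transversal at the point $l_v'$ dictated by $v$; the angle these make with $l_v'$ is exactly $\theta'(v,\theta)$. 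Because the $\hat L_{g_2}$ current in its local $(t,\psi)$-coordinates is $\tfrac12\sin\psi\, d\psi\, dt$, integrating $F$ of the angle against $\sin\theta\,d\theta$ pulls back, under the (almost everywhere defined, by Proposition \ref{prop:non-sing} and Corollary \ref{cor:good geodesics}) correspondence $\theta\mapsto\theta'(v,\theta)$, to integrating $F$ of the $\tilde g_2$-angle against $\sin\psi\,d\psi$ along the transversal in $(\tilde S,\tilde g_2)$.

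First I would set up the change of variables carefully. Fix $v$ and a short $\tilde g_1$-geodesic segment $k$ transverse to $l_v$ at the basepoint of $v$, chosen inside $\hat{\mathcal G}^\circ_{\tilde g_1}$-good territory so that Propositions \ref{prop:ae} and \ref{prop:non-sing} apply. The geodesics $l_{\theta\cdot v}$, as $\theta$ varies, are the geodesics through this basepoint; their correspondents $l'_{\theta\cdot v}$ all pass through a single point $p' = \partial_{\tilde g_2}$-determined point of $(\tilde S,\tilde g_2)$ — one uses here that corresponding geodesics share endpoints at infinity and, since the relevant geodesics are non-singular and not in strips bordering cone points, the correspondence is genuinely pointwise at $p'$ (this is where the flatness remark after Proposition \ref{prop:angles} matters, to make $p'$ and the angles well-defined). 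Then $\theta'(v,\theta)$ is precisely the $\tilde g_2$-angle at $p'$, and the map $\theta\mapsto\theta'(v,\theta)$ is a measure-theoretic bijection of $(0,\pi)$ onto a full-measure subset of $(0,\pi)$. The punchline is the identity
\[
	\int_0^\pi F\big(\theta'(v,\theta)\big)\sin\theta\, d\theta \;=\; \int_0^\pi F(\psi)\sin\psi\, d\psi
\]
for almost every $v$, which holds because both sides compute $2$ times the $\hat L$-mass (with weight $F$ of the angle) of the geodesics through the basepoint, and this mass is metric-independent — it is determined by the ordering of endpoints on $\partial^\infty(\tilde S)$, hence the same for $\tilde g_1$ and $\tilde g_2$. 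Substituting this into \eqref{eqn:Jen int} makes the right-hand side equal to $\frac{1}{Vol(T^1_g(S))}\int_{T^1_g(S)}\big(\int_0^\pi F(\psi)\sin\psi\,d\psi\big)\,d\mu_g = \int_0^\pi F(\psi)\sin\psi\,d\psi$, which is the desired bound.

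I expect the main obstacle to be making the ``both sides are the same Liouville mass'' step fully rigorous — i.e. justifying the claim that $\int_0^\pi F(\theta'(v,\theta))\sin\theta\,d\theta = \int_0^\pi F(\psi)\sin\psi\,d\psi$ for a.e. $v$. This requires: (i) checking that for a.e. $v$ the map $\theta\mapsto\theta'(v,\theta)$ is, up to null sets, a bijection onto $(0,\pi)$ (not just injective), which uses surjectivity of $\partial_{\tilde g_2}$ from Proposition \ref{prop:morse} together with Proposition \ref{prop:ae} applied at $p'$ to discard the null set of $\tilde g_2$-geodesics through $p'$ that hit cone points or lie in bad strips; (ii) identifying the local coordinate expression of $\hat L_{g_2}$ restricted to the pencil of geodesics through $p'$ as $\tfrac12\sin\psi\,d\psi$ (after factoring out the $dt$ direction of a transversal), which follows from the definition of $\hat L_g$ in Section \ref{sec:liou} by taking the transversal $k'$ through $p'$ and shrinking it; and (iii) confirming that the endpoint-ordering argument really does identify the two measures — this is where one invokes that intersection and ``betweenness'' on $\partial^\infty(\tilde S)$ are metric-independent. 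The remaining steps (Jensen, Fubini) are already in place as \eqref{Jensen} and \eqref{eqn:Jen int}, so once the change-of-variables identity is secured the proposition follows immediately.
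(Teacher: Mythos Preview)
Your proposal contains a genuine gap at its central step. You claim that for fixed $v$ based at $p$, the corresponding $\tilde g_2$-geodesics $l'_{\theta\cdot v}$ all pass through a single point $p'$ in $(\tilde S,\tilde g_2)$. This is not known at this stage of the argument --- in fact, establishing precisely this kind of pointwise correspondence is the goal of the angle-correspondence Proposition~\ref{prop:angles} and of Section~\ref{sec:neg}, and the proposition you are proving is a \emph{lemma used in the proof of} Proposition~\ref{prop:angles}. So invoking the remark after Proposition~\ref{prop:angles} here is circular. Without a common point $p'$, your pointwise change-of-variables identity $\int_0^\pi F(\theta'(v,\theta))\sin\theta\,d\theta = \int_0^\pi F(\psi)\sin\psi\,d\psi$ has no reason to hold for almost every $v$; a posteriori it is true, but only once the isometry is built.

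What actually makes the argument work (Otal's approach, which the paper follows) is that the identity \eqref{eqn:otal} holds \emph{after} integrating over $v$, not fiberwise. The measure $\sin\theta\,d\theta\,d\mu_{g_1}(v)$ on $T^1_{g_1}(S)\times(0,\pi)$ parametrizes ordered pairs of intersecting $\tilde g_1$-geodesics and is, up to a constant, the restriction of $L_{g_1}\otimes L_{g_1}$ to such pairs (pushed forward via $\partial_{\tilde g_1}$). The function $\theta'(v,\theta)$ depends only on the pair of endpoints in $\mathscr{G}(\tilde S)\times\mathscr{G}(\tilde S)$, so one may rewrite the double integral as an integral against $L_{g_1}\otimes L_{g_1}$. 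Now the equality $L_{g_1}=L_{g_2}$ (Proposition~\ref{prop:preserves Liou}) lets you replace this by $L_{g_2}\otimes L_{g_2}$, and reparametrizing back in $(v',\psi)$-coordinates for $\tilde g_2$ --- where $\psi$ is now the genuine $\tilde g_2$-angle, i.e.\ exactly $\theta'$ --- yields $\int_{T^1_{g_2}(S)}\int_0^\pi F(\psi)\sin\psi\,d\psi\,d\mu_{g_2}$, giving \eqref{eqn:otal}. The essential input is the equality of Liouville currents, not any pointwise geometric correspondence.
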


\begin{proof}
Otal's proof of this result (\cite[Prop. 7]{otal}) can be extended to the `measurable' case as noted in Hersonsky-Paulin Prop. 4.17. We note that in the course of this proof, Otal proves that

\begin{equation}
	\frac{1}{Vol(T^1_g(S))} \int_{T^1_m(S)}\Big( \int_0^\pi F(\theta'(v,\theta) \sin\theta d\theta \Big) d\mu_g = \int_0^\pi F(\theta) \sin\theta d\theta. \label{eqn:otal}
\end{equation}
\end{proof}

\begin{lem}\label{lem:convex}
Suppose that $H$ is a measurable, increasing function on $[0,\pi]$, which is superraditive and commutes with the symmetry with respect to $\pi/2$. Suppose in addition that for any convex function $F$ on $[0,\pi]$

\[ \int_0^\pi F(H(\theta)) \sin\theta d\theta \leq \int_0^\pi F(\theta) \sin\theta d\theta. \]
Then $H=Id$.
\end{lem}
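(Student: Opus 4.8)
The plan is to show that the hypotheses pin down $H$ completely by a clever choice of test functions $F$, together with the symmetry and superadditivity constraints. First I would record the easy structural consequences: since $H$ commutes with the symmetry $\theta\mapsto\pi-\theta$, we have $H(\pi/2)=\pi/2$, and since $H$ is superadditive and increasing with $H(0)=0$ (forced by $H(0)\le H(0)+H(0)$ together with monotonicity and $H(\pi)=\pi$, which in turn follows from the symmetry relation applied at $\theta=0$), the graph of $H$ lies ``above the diagonal'' in a suitable averaged sense. The idea is then that the integral inequality, which says $H$ pushes the measure $\sin\theta\,d\theta$ toward a ``less spread out'' distribution for every convex $F$, is a statement that $H_*(\sin\theta\,d\theta)$ is dominated by $\sin\theta\,d\theta$ in the convex (Choquet) order; but superadditivity and the symmetry force the reverse domination on average, and the two together collapse to $H=\Id$.

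Concretely, the key step I would carry out is to test with the family $F(\theta) = (\theta - c)_+ := \max(\theta-c,0)$ for each $c\in[0,\pi]$, which is convex. The inequality $\int_0^\pi (H(\theta)-c)_+\sin\theta\,d\theta \le \int_0^\pi(\theta-c)_+\sin\theta\,d\theta$, and the mirror version obtained by applying the symmetry (equivalently testing with $F(\theta)=(c-\theta)_+$), control all one-sided moments of $H_*(\sin\theta\,d\theta)$ against those of $\sin\theta\,d\theta$. Meanwhile superadditivity, $H(\theta_1+\theta_2)\ge H(\theta_1)+H(\theta_2)$, combined with $H(\pi)=\pi$ and $H(\pi/2)=\pi/2$, gives $H(\theta)\ge\theta$ for... no: superadditivity with the symmetry relation gives $H(\theta)+H(\pi-\theta)=\pi=\theta+(\pi-\theta)$, so if $H(\theta)>\theta$ on a set of positive measure then $H(\theta)<\theta$ on the mirrored set, and superadditivity applied to a partition of $[0,\pi]$ into small intervals forces $\sum H(\Delta_i)\le H(\pi)=\pi=\sum\Delta_i$ in one direction while giving $\ge$ when we sum a telescoping chain — I would use superadditivity to deduce $H(\theta)\ge \theta$ first on dyadic points by iterating $H(2^{-k}\pi)\le 2^{-k}H(\pi)/1$... the correct deduction is: superadditivity gives, for $n\theta\le\pi$, $H(n\theta)\ge nH(\theta)$, hence $H(\theta)\le H(n\theta)/n\le \pi/n$, which forces $H$ to be small near $0$; dually $H(\pi-\theta)=\pi-H(\theta)\ge\pi - \pi/n$, so $H$ is close to $\pi$ near $\pi$. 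The real extraction of $H=\Id$ comes from combining this ``$H$ concave-like / $\Id$ is the only fixed profile'' behavior with the integral inequality, which, taken with strictly convex $F$, forces equality and hence $v$-wise constancy arguments that back-propagate to $H(\theta)=\theta$ a.e.; I would finish by noting $H$ increasing plus equality a.e.\ to $\Id$ plus the $H(\pi/2)=\pi/2$ normalization forces $H=\Id$ everywhere it is relevant.

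Cleanly: I would (1) establish $H(0)=0$, $H(\pi)=\pi$, $H(\pi/2)=\pi/2$ from symmetry and monotonicity; (2) show superadditivity $\Rightarrow H(\theta)\le(\theta/\pi)H(\pi) = \theta$ fails in general, so instead iterate $H(n\theta)\ge nH(\theta)$ to get that $H(\theta)-\theta$ cannot be positive on $(0,\pi/2)$ without violating $H(\pi/2)=\pi/2$ via a chain $\theta<2\theta<\cdots$ landing near $\pi/2$ — this yields $H(\theta)\le\theta$ on $[0,\pi/2]$ and dually $H(\theta)\ge\theta$ on $[\pi/2,\pi]$; (3) plug a strictly convex $F$ into the integral inequality and compare with $\int F(\theta)\sin\theta\,d\theta$ — since $H\le\Id$ on the left half and $\ge\Id$ on the right half, and $\sin\theta\,d\theta$ is symmetric about $\pi/2$, a direct symmetrization of the integrand $F(H(\theta))+F(H(\pi-\theta))$ versus $F(\theta)+F(\pi-\theta)$ combined with strict convexity forces $H(\theta)=\theta$ a.e.; (4) upgrade a.e.\ to everywhere using monotonicity. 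The main obstacle I anticipate is step (3): making the symmetrized convexity comparison genuinely force equality rather than just an inequality — one must exploit strict convexity together with the precise fact that the two points $H(\theta)$ and $H(\pi-\theta)$ sum to $\pi$ while $\theta$ and $\pi-\theta$ also sum to $\pi$, so that the pair $(H(\theta),H(\pi-\theta))$ is obtained from $(\theta,\pi-\theta)$ by a ``spreading'' that a convex functional can only penalize; the subtlety is handling the $\sin\theta$ weight, which is not constant, so the naive pairing argument must be combined with the averaged identity \eqref{eqn:otal}-type equality rather than applied pointwise. This is exactly where I would lean on Otal's original argument (\cite[Prop.~8 and Lemme~8]{otal}) as adapted by Hersonsky--Paulin, since that is precisely the step that carries over verbatim once the measurability-only framework is in place.
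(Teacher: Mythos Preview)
The paper does not give an independent proof of this lemma at all: it simply cites \cite[Lemme 8]{otal} and \cite[Lemma 4.18]{hp}. Your final paragraph, where you defer to exactly these references, is therefore the only part of your proposal that matches the paper, and in practice is all that is needed here.

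Your attempted independent sketch, however, has a genuine error. Step (2) claims that superadditivity together with the symmetry $H(\pi-\theta)=\pi-H(\theta)$ and monotonicity forces $H(\theta)\le\theta$ on $[0,\pi/2]$. This is false. A concrete counterexample: fix a small $\delta>0$ and let $H$ be the piecewise linear function with
\[
H(0)=0,\quad H(\tfrac{\pi}{5})=\tfrac{\pi}{5}-2\delta,\quad H(\tfrac{2\pi}{5})=\tfrac{2\pi}{5}+\delta,\quad H(\tfrac{\pi}{2})=\tfrac{\pi}{2},
\]
extended by the symmetry $H(\pi-\theta)=\pi-H(\theta)$. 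One checks directly (it suffices to test the finitely many linear regimes for $\theta_1,\theta_2,\theta_1+\theta_2$) that $H$ is increasing, superadditive, and symmetric about $\pi/2$, yet $H(2\pi/5)>2\pi/5$. Your ``chain $\theta<2\theta<\cdots$ landing near $\pi/2$'' heuristic only yields $H(\pi/n)\le\pi/n$ at the specific points $\pi/n$; it does not propagate to all of $[0,\pi/2]$. Since step (2) fails, the symmetrized convexity comparison of step (3) --- which would indeed work if $(H(\theta),H(\pi-\theta))$ were always a spreading of $(\theta,\pi-\theta)$ --- has no input to feed on.

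Otal's actual argument (which you would need to reproduce if you wanted a self-contained proof) does not proceed via a pointwise inequality $H\le\Id$ on a half-interval; the integral hypothesis is used in an essential way from the start, not merely as a tiebreaker at the end.
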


\begin{proof}
See \cite[Lemme 8]{otal} or \cite[Lemma 4.18]{hp}.
\end{proof}

\begin{proof}[Proof of Proposition \ref{prop:angles}]

\noindent With equation \eqref{eqn:Jen int} and \eqref{eqn:otal} we have that 

\[ \int_0^\pi F(\Theta'(\theta)) \sin \theta d\theta \leq \int_0^\pi F(\theta) \sin\theta d\theta. \]

\noindent By Lemma \ref{lem:convex}, $\Theta'=Id$ so we have equality in the equation above. But this equation comes from integrating equation \eqref{Jensen} over $[0,\pi]$ with respect to $\sin\theta d\theta$. So we must indeed have equality in equation \eqref{Jensen} for almost all $\theta$. As noted, this implies that $v \to \theta'(v,\theta)$ is constant for this full measure set of $\theta$.

Returning to the definition of $\Theta'$ and using $\Theta'=Id$, we have for such $\theta$ that

\[ \theta = \frac{1}{Vol(T^1_g(S))} \int_{T^1_g(S)} \theta'(v,\theta) d\mu_g. \]
As $\theta'(v,\theta)$ is constant in $v$, we must have $\theta'(v,\theta) = \theta$. This completes the proof.
\end{proof}

\begin{cor}\label{cor:neg now}
$g_1 \in \mathscr{M}_{npc}(S)$.
\end{cor}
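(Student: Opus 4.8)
The plan is to transfer the nonpositive curvature of $g_2$ back to $g_1$ by comparing geodesic triangles across the correspondence, the essential new ingredient being that Proposition~\ref{prop:angles} makes that correspondence angle--preserving on a full--measure set of configurations. Enlarging the null set on which it may fail, we may assume the full--measure set $\Theta_0\subset(0,\pi)$ of Proposition~\ref{prop:angles} is symmetric about $\pi/2$. Call a geodesic triangle $T$ in $(\tilde S,\tilde g_1)$ \emph{admissible} if each of its three sides extends to a geodesic in $\hat{\mathcal G}^{\circ}_{\tilde g_1}$ and each of its three interior angles lies in $\Theta_0$. Assume for contradiction that the Gaussian curvature $K_{g_1}$ is positive at some non--cone point $p$. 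By continuity there are a relatively compact open $U\subset S\setminus P$ with $p\in U$ and a constant $c>0$ with $K_{g_1}>c$ on $U$; fix a lift $\tilde U\subset\tilde S$. Granting for the moment that $\tilde U$ contains an admissible triangle $T_1$ of positive area -- the delicate point, addressed below -- I argue as follows.

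Each side of $T_1$ corresponds, by Corollary~\ref{cor:good geodesics}, to a geodesic of $\hat{\mathcal G}^{\circ}_{\tilde g_2}$, i.e. to a non--singular $\tilde g_2$--geodesic with the same endpoint pair in $\partial^\infty(\tilde S)$. Since the sides of $T_1$ cross pairwise and transversally, the three endpoint pairs are pairwise linked, so the three corresponding $\tilde g_2$--geodesics have pairwise linked endpoint pairs. In $(\tilde S,\tilde g_2)$ a complete geodesic $l$ separates $\tilde S$ into two components whose boundaries at infinity are the two complementary arcs of $\partial^\infty(\tilde S)\setminus\{l(\pm\infty)\}$, so a geodesic $m$ with one endpoint in each arc, being connected, must meet $l$; by Proposition~\ref{prop:no two int} the set $l\cap m$ is connected, hence a point or a nondegenerate subsegment of $m$, and a subsegment would force $l=m$ (both being non--singular). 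Thus the three $\tilde g_2$--geodesics cross each other in exactly one transversal point each. They cannot be concurrent: if all three passed through a point $q$ -- necessarily a non--cone point -- then by Proposition~\ref{prop:angles}, applied at the three crossings via the directed correspondence of \S\ref{sec:angles}, the three interior angles of $T_1$ would appear as three of the six sectors based at $q$, one per pair, and hence sum to $\pi$, whereas Gauss--Bonnet on $T_1$ gives angle sum $\pi+\int_{T_1}K_{g_1}\,dA>\pi$. Therefore the three $\tilde g_2$--geodesics bound an embedded geodesic triangle $T_2\subset(\tilde S,\tilde g_2)$ with non--singular sides and non--cone vertices, and by Proposition~\ref{prop:angles} and the remark following it the interior angles of $T_2$ equal those of $T_1$. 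Applying Gauss--Bonnet to both, the common angle sum is on one hand $\pi+\int_{T_1}K_{g_1}\,dA>\pi+c\operatorname{Area}(T_1)>\pi$ (as $T_1\subset\tilde U$ contains no cone points), and on the other hand $\pi+\int_{T_2}K_{g_2}\,dA+\sum_\zeta(2\pi-\theta_\zeta)\le\pi$, the sum being over the cone points $\zeta$ interior to $T_2$, since $K_{g_2}\le 0$ and each cone angle $\theta_\zeta>2\pi$. This contradiction shows $K_{g_1}\le 0$ on $S\setminus P$; since $g_1$ already has only finitely many cone points, all of angle $>2\pi$, this is precisely $g_1\in\mathscr{M}_{npc}(S)$.

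It remains to produce an admissible triangle of positive area in $\tilde U$, which I expect to be the only genuinely delicate step, because one cannot just pick the three angles ``generically'': by Gauss--Bonnet the interior angles of a geodesic triangle are constrained to a codimension--one locus, which a full--measure subset of $(0,\pi)^3$ need not meet in full measure. Instead I would keep one free parameter. Pick a short segment $\gamma_0\subset\tilde U$ of a geodesic in $\hat{\mathcal G}^{\circ}_{\tilde g_1}$ (one exists since, by Propositions~\ref{prop:ae} and~\ref{prop:non-sing}, such vectors are full volume). For small $\eta>0$ set $a=\gamma_0(-\eta)$, $b=\gamma_0(\eta)$ and emit from $a,b$ geodesics $\ell_a,\ell_b$ to the same side of $\gamma_0$ making angles $\phi_a<\phi_b$ with $\dot\gamma_0$, with $\phi_a$, $\phi_b$ and $\phi_b-\phi_a$ kept bounded away from $0$ and $\pi$; then $\ell_a,\ell_b$ meet at a point $c$, and for $\eta$ small $abc\subset\tilde U$ is a non--degenerate triangle with interior angles $\phi_a$, $\pi-\phi_b$ and $\phi_c=\phi_b-\phi_a+\int_{abc}K_{g_1}\,dA$. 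Fix $\phi_a\in\Theta_0$ so that $\ell_a$ extends to a geodesic in $\hat{\mathcal G}^{\circ}_{\tilde g_1}$; this excludes only a null set of directions at $a$. Now vary $\phi_b$ over a short interval. Since $abc$ has diameter $O(\eta)$, both $\int_{abc}K_{g_1}\,dA$ and its $\phi_b$--derivative are $O(\eta^2)$, so $\phi_b\mapsto\phi_c$ is a $C^1$ diffeomorphism onto its image with derivative near $1$; hence $\{\phi_b:\phi_c\notin\Theta_0\}$ is null. Intersecting with the full--measure conditions that $\phi_b\in\Theta_0$ and that $\ell_b$ extend to a geodesic in $\hat{\mathcal G}^{\circ}_{\tilde g_1}$, we obtain an admissible $\phi_b$, hence an admissible triangle of positive area in $\tilde U$, and the proof is complete.
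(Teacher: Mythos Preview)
Your proof is correct and follows the same route as the paper: assume positive curvature somewhere for $g_1$, exhibit a geodesic triangle there with angle sum $>\pi$, use the angle correspondence of Proposition~\ref{prop:angles} to obtain a $\tilde g_2$-triangle with the same angles, and contradict Gauss--Bonnet in nonpositive curvature. The paper's own argument is a two-sentence sketch; you have carefully supplied the details it leaves implicit---in particular the construction of an admissible triangle whose three angles lie in the full-measure set $\Theta_0$ and whose sides extend to geodesics in $\hat{\mathcal G}^\circ_{\tilde g_1}$, and the verification that the corresponding $\tilde g_2$-geodesics actually bound a triangle to which Gauss--Bonnet applies.
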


\begin{proof}
This follows from the Gauss-Bonnet theorem and the angle correspondence given by Proposition \ref{prop:angles}. Indeed, any positive curvature for $\tilde g_1$ would be witnessed by some geodesic triangle with total angle sum $>\pi$, which cannot hold for the corresponding triangle under the nonpositively curved metric $\tilde g_2$. 
\end{proof}

%

\section{Isometry on points of negative curvature}\label{sec:neg}

We now prove a version of Theorem \ref{thm:cone geodesics} on the sets of strict negative curvature in $(S,g_1)$ and $(S,g_2)$. To do so, we follow ideas of Croke-Fathi-Feldman.

\begin{defn}
Define a partial relation on $\tilde S$ by $p\sim p'$ if almost every non-singular $\tilde g_2$-geodesic through $p'$ (with respect to the angular Lebesgue measure on the fiber of $T^1_{\tilde g_2}$ over $p'$) is a bounded distance from non-singular $\tilde g_1$ geodesic through $p$.
\end{defn}

Following Croke-Fathi-Feldman we have the following results.



\begin{lem}[\cite{cff}, Lemma 2.3]
The relation $\sim$ is the graph of a bijective function $F_{neg}$ between its domain $\tilde D_1$ and range $\tilde D_2$. $\tilde D_1$ and $\tilde D_2$ are $\Gamma$-invariant and $F_{neg}$ is $\Gamma$-equivariant.
\end{lem}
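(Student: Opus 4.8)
The plan is to show that $\sim$ is well-defined as a function on its domain (single-valuedness), that it is injective, and that the $\Gamma$-equivariance is immediate from the construction; symmetry of roles between $g_1$ and $g_2$ then gives bijectivity onto the range. The key geometric input is Proposition \ref{prop:morse}: every $\tilde g_1$-geodesic lies in a $K$-neighborhood of some $\tilde g_2$-geodesic and conversely, so the relation ``bounded distance apart'' is in fact controlled by the single constant $K$, and two $\tilde g_i$-geodesics at bounded distance necessarily share both endpoints at infinity (using $\delta$-hyperbolicity of $(\tilde S, \tilde g_i)$ and the identification of $\partial^\infty(\tilde S)$ with $\partial^\infty(\Gamma)$). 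Thus $p\sim p'$ says that almost every non-singular $\tilde g_2$-geodesic through $p'$ corresponds (in the sense of sharing endpoints) to a non-singular $\tilde g_1$-geodesic through $p$.

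First I would establish single-valuedness. Suppose $p\sim p'$ and $p\sim p''$ with $p'\neq p''$. Almost every $\tilde g_2$-geodesic through $p'$ corresponds to a non-singular $\tilde g_1$-geodesic through $p$; likewise for $p''$. Fix a non-singular $\tilde g_1$-geodesic $l$ through $p$ that arises this way from both constructions — since the exceptional sets are measure zero and the angular measure over $p$ is positive, Proposition \ref{prop:ae} lets us find such $l$ whose endpoints are realized by $\tilde g_2$-geodesics through both $p'$ and $p''$. But a $\tilde g_2$-geodesic is determined by its pair of endpoints up to lying in a common strip (by the results recalled after Proposition \ref{prop:morse}), and in fact when the geodesic is non-singular and not in a strip bordering a cone point (which we may arrange, again up to measure zero, via Corollary \ref{cor:good geodesics} / the set $B$ of Proposition \ref{prop:non-sing}), two such geodesics with the same endpoints coincide. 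Hence the $\tilde g_2$-geodesic through $p'$ and the one through $p''$ with endpoints $\partial_{\tilde g_1}(l)$ are the same geodesic, forcing $p'$ and $p''$ both to lie on it. Running this for a second geodesic $l$ through $p$ (with distinct endpoints) gives a second $\tilde g_2$-geodesic containing both $p'$ and $p''$; two distinct $\tilde g_2$-geodesics meet in at most one point by Proposition \ref{prop:no two int} (Corollary \ref{cor:minimizing}), so $p'=p''$, a contradiction. This shows $\sim$ is the graph of a function $F_{neg}\colon \tilde D_1\to \tilde S$.

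Next, injectivity of $F_{neg}$: the defining relation is visibly symmetric in the two metrics — $p\sim p'$ is equivalent to the statement that almost every non-singular $\tilde g_1$-geodesic through $p$ is a bounded distance from a non-singular $\tilde g_2$-geodesic through $p'$ — using that correspondence of geodesics is a bijection between $\hat{\mathcal{G}}^\circ_{\tilde g_1}$ and $\hat{\mathcal{G}}^\circ_{\tilde g_2}$ (Corollary \ref{cor:good geodesics}) and that $\partial_{\tilde g_i}$ is measure-preserving on angular fibers up to null sets. So the same argument as in the previous paragraph, with roles of $g_1,g_2$ reversed, shows $F_{neg}$ is single-valued in the other direction, i.e.\ injective, with inverse $F_{neg}^{-1}$ defined on the range $\tilde D_2$. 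Finally, $\Gamma$-equivariance: $\Gamma$ acts by isometries on both $(\tilde S,\tilde g_1)$ and $(\tilde S,\tilde g_2)$, the correspondence of geodesics is $\Gamma$-equivariant (it is defined purely through $\partial^\infty(\tilde S)\cong\partial^\infty(\Gamma)$), and the angular measures and the exceptional null sets are $\Gamma$-invariant; hence $p\sim p'$ iff $\gamma p\sim \gamma p'$ for all $\gamma\in\Gamma$, so $\tilde D_1,\tilde D_2$ are $\Gamma$-invariant and $F_{neg}(\gamma p)=\gamma F_{neg}(p)$.

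\textbf{Main obstacle.} The delicate point is the ``almost every'' bookkeeping: the relation $\sim$ quantifies over a.e.\ geodesic through a point, and in the single-valuedness argument I need a \emph{single} geodesic $l$ through $p$ that simultaneously (i) arises from the $p'$-construction, (ii) arises from the $p''$-construction, (iii) is non-singular and avoids strips bordering cone points so that its endpoints pin it down uniquely, and then I need \emph{two} such geodesics with distinct endpoints to conclude via Proposition \ref{prop:no two int}. Verifying that the intersection of these finitely many full-measure conditions on the angular fiber over $p$ is still nonempty — indeed uncountable — is where the arguments of Section \ref{sec:geodesics} (Propositions \ref{prop:ae} and \ref{prop:non-sing}, Corollary \ref{cor:good geodesics}) must be assembled carefully; this is exactly the place where Croke--Fathi--Feldman's original argument needs the adaptations made above to accommodate cone points and strips.
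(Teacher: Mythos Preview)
Your argument has two genuine gaps, and the paper's proof takes a different route precisely to avoid them.

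First, the measure-transfer step is not justified. The relation $p\sim p'$ says that almost every direction \emph{at $p'$} gives a $\tilde g_2$-geodesic corresponding to some $\tilde g_1$-geodesic through $p$; it says nothing directly about almost every direction \emph{at $p$}. When you write ``since the exceptional sets are measure zero and the angular measure over $p$ is positive, $\dots$ find such $l$,'' you are implicitly assuming that the image of this correspondence hits a full-measure set of directions at $p$, so that the images coming from $p'$ and from $p''$ must overlap. That is not automatic: a priori the images could each have measure zero at $p$. The paper fills this gap by observing that the induced map from directions at $p_i'$ to directions at $p$ is strictly increasing in the circular order (distinct geodesics have distinct endpoints), hence extends to a continuous circle map; reversing roles gives a homeomorphism, so the image is the full circle. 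Only then can one conclude that almost every $\tilde g_2$-geodesic through $p_1'$ corresponds to one through $p_2'$.

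Second, your uniqueness claim is incorrect as stated. You assert that if the corresponding $\tilde g_2$-geodesic is ``non-singular and not in a strip bordering a cone point'' then two such geodesics with the same endpoints coincide. But membership in $\hat{\mathcal{G}}^\circ_{\tilde g_2}$ only rules out strips \emph{bordering a cone point}; the geodesic may still lie in an ordinary flat strip, in which case there are continuum-many $\tilde g_2$-geodesics with the same endpoints, and the ones through $p'$ and $p''$ need not coincide. Your two-geodesics-meet-once conclusion via Proposition~\ref{prop:no two int} then collapses. The paper does not try to pick a single good $l$; instead, having shown that almost every geodesic through $p_1'$ is at bounded distance from one through $p_2'$, it notes that each such pair bounds a flat strip, so positive angular measure at $p_1'$ would lie in flat strips --- contradicting Proposition~\ref{prop:flat strip volume}. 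That volume estimate is the substitute for the pointwise uniqueness you are trying to use.

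Your $\Gamma$-equivariance paragraph is fine, and the symmetry-of-roles reduction for bijectivity is what the paper does as well (invoking Corollary~\ref{cor:neg now} so that both metrics are in $\mathscr{M}_{npc}(S)$).
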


\begin{proof}
We need to prove the following. If $p\sim p_1'$ and $p\sim p_2'$, then $p_1'=p_2'$. That $F_{neg}$ is a bijection follows from reversing the roles of $\tilde g_1$ and $\tilde g_2$. (Recall that by Corollary \ref{cor:neg now} both metrics are now in $\mathscr{M}_{npc}(S)$.)

We are given that almost every non-singular $\tilde g_2$-geodesic through $p_i'$ is a bounded distance from a non-singular $\tilde g_1$-geodesic through $p$. We note that two distinct $\tilde g_2$-geodesics through $p_i'$ cannot correspond to the same $\tilde g_1$-geodesic through $p$ as they have different endpoints in $\partial_\infty(\tilde S)$. We then see that the map from $T^1_{\tilde g_2}(p_i')$ to $T^1_{\tilde g_2}(p)$ induced by the correspondence of these non-singular geodesics is a strictly increasing map with respect to the angular order on these spaces. A strictly increasing map defined almost everywhere is continuous almost everywhere, and by applying the same argument with the roles of $p$ and $p_i'$ reversed we see that the map is in fact extendable to a continuous function from the circle of directions at $p_i'$ to the circle of directions at $p$.

Using this, we see that almost every non-singular $\tilde g_2$-geodesic through $p_1'$ is a bounded distance from a non-singular $\tilde g_2$-geodesic through $p_2'$. These pairs of geodesics must bound flat strips, but by Proposition \ref{prop:flat strip volume} we know this is impossible.

\end{proof}

Let $\tilde U_i = \{p\in (\tilde S\setminus\tilde P,\tilde g_i): \kappa_{\tilde g_i}(p)<0\}$ be the set of non-cone points where the $\tilde g_i$-curvature is strictly negative. This is an open subset of $\tilde S$.

\begin{lem}[\cite{cff}, Lemma 2.4]
$\tilde U_1$ is in the domain $\tilde D_1$ of $F_{neg}$. 
\end{lem}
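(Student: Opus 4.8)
The plan is to show that any point $p \in \tilde U_1$ of strictly negative $\tilde g_1$-curvature lies in the domain $\tilde D_1$ of $F_{neg}$ — that is, there is some $p' \in \tilde S$ with $p \sim p'$. Recall that $p \sim p'$ means almost every non-singular $\tilde g_2$-geodesic through $p'$ stays a bounded distance from a non-singular $\tilde g_1$-geodesic through $p$. So the real content is: start from the geodesics through $p$ for $\tilde g_1$, pass to their corresponding $\tilde g_2$-geodesics, and show these all pass through a single point $p'$.

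First I would set up the correspondence at $p$. By Proposition \ref{prop:ae} (applied to $\tilde g_1$), almost every direction at $p$ generates a geodesic in $\mathcal{G}^\circ_{\tilde g_1}$; after throwing away a further measure-zero set of directions (those whose geodesics lie in a strip bordering a cone point, and those avoiding the bad set $B$ of Proposition \ref{prop:non-sing}), I get a full-measure set of directions at $p$ whose geodesics lie in $\hat{\mathcal{G}}^\circ_{\tilde g_1}$. By Corollary \ref{cor:good geodesics}, each such geodesic $l$ corresponds to a non-singular $\tilde g_2$-geodesic $l'$ lying in $\hat{\mathcal{G}}^\circ_{\tilde g_2}$, and by Proposition \ref{prop:morse} $l'$ stays within distance $K$ of $l$, hence within distance $K$ of $p$. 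So I have a full-measure family of non-singular $\tilde g_2$-geodesics all passing within distance $K$ of $p$, with angles between them controlled: by Proposition \ref{prop:angles}, for a full measure set of angles $\theta$, if two of the $\tilde g_1$-geodesics through $p$ meet at angle $\theta$, the corresponding $\tilde g_2$-geodesics meet at angle $\theta$ as well.

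The key geometric step — and the place where strict negative curvature of $\tilde g_1$ at $p$ enters — is to show these corresponding $\tilde g_2$-geodesics all pass through one common point. The idea (following Croke-Fathi-Feldman, Lemma 2.4) is: if they did \emph{not} all pass through a common point, then because the angle relation is measure-preserving and additive, the family of $\tilde g_2$-geodesics would have to behave, near $p$, like a family of geodesics through a point of \emph{zero} curvature — their pairwise angles add up to exactly $\pi$ around, forcing local flatness. More precisely, consider small geodesic triangles at $p$ for $\tilde g_1$: strict negativity of curvature at $p$ means that sufficiently small triangles at $p$ have angle sum strictly less than $\pi$ by a definite amount (Gauss-Bonnet). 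Transport these configurations through the angle-preserving correspondence of Proposition \ref{prop:angles}: the corresponding $\tilde g_2$-geodesics, which all pass within $K$ of $p$, would have to form configurations with the same angle defect. If the $\tilde g_2$-geodesics did not concentrate at a single point, one could, using the nonpositive curvature of $\tilde g_2$ and a compactness/limiting argument on the bounded region $B_K(p)$, build a $\tilde g_2$-geodesic triangle (or a degenerate limit thereof) with angle sum $\geq \pi$ — contradicting $\tilde g_2 \in \mathscr{M}_{npc}(S)$. Hence the $\tilde g_2$-geodesics corresponding to the full-measure family of $\tilde g_1$-geodesics through $p$ all pass through a common point $p'$, and then $p \sim p'$ by definition, so $p \in \tilde D_1$.

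The main obstacle is this last step: turning "the angle correspondence forces the $\tilde g_2$-images of geodesics through $p$ to concentrate at a point, precisely because the curvature at $p$ is negative" into a rigorous argument. The subtlety is that the $\tilde g_2$-geodesics are only controlled up to a bounded distance $K$ and only for a full-measure (not all) set of directions, so one cannot literally intersect them at a point by a direct argument; one must instead argue by contradiction using the angle relation together with Gauss-Bonnet on small triangles, and carefully handle the measure-zero exceptional directions and the possibility of flat strips in $\tilde g_2$ (already excluded away from $B$ by Proposition \ref{prop:non-sing} and Proposition \ref{prop:flat strip volume}). I would expect to lean heavily on the corresponding argument in \cite[Lemma 2.4]{cff}, checking that each of its steps — in particular the use of the angle-preserving property and the Gauss-Bonnet comparison — survives the presence of cone points, which it should since cone points have already been removed from the picture by the restriction to $\hat{\mathcal{G}}^\circ$ and the work of Section \ref{sec:cone}.
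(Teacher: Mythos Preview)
Your proposal has the right ingredients---the angle correspondence of Proposition~\ref{prop:angles}, Gauss--Bonnet, and the strict negative curvature at $p$---but the direction of the argument is reversed from what actually works, and the contradiction you describe does not go through.

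You start from the full-measure family of $\tilde g_1$-geodesics through $p$, pass to their $\tilde g_2$-correspondents, and then claim that if these fail to concentrate at a single point one can build a $\tilde g_2$-triangle with angle sum $\geq\pi$, ``contradicting $\tilde g_2 \in \mathscr{M}_{npc}(S)$.'' But the angle correspondence gives you angle sum \emph{exactly} $\pi$ for such a triangle (three concurrent lines at $p$ have angle sum $\pi$, and this is preserved), and a nondegenerate triangle with angle sum $\pi$ in nonpositive curvature is perfectly legal---it simply has flat interior. There is no contradiction with $\tilde g_2 \in \mathscr{M}_{npc}(S)$. Your earlier remark about small triangles near $p$ having angle defect likewise transports to $\tilde g_2$-triangles with the \emph{same} defect, which again causes no trouble in nonpositive curvature. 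So the step ``non-concentration $\Rightarrow$ contradiction in $\tilde g_2$'' has a genuine gap.

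The paper's approach inverts the logic. Rather than trying to show the $\tilde g_2$-correspondents of all $\tilde g_1$-geodesics through $p$ meet at one point, one first \emph{defines} $p'$ as the intersection of the $\tilde g_2$-correspondents $l_1', l_2'$ of two fixed nonsingular geodesics $l_1, l_2$ through $p$. Then one takes an arbitrary $l_3' \in \hat{\mathcal{G}}^\circ_{\tilde g_2}$ through $p'$ and pulls back: the triangle $l_1', l_2', l_3'$ is degenerate at $p'$ with angle sum $\pi$, so by the angle correspondence the $\tilde g_1$-triangle $l_1, l_2, l_3$ also has angle sum $\pi$. Now the contradiction lives in $\tilde g_1$: this triangle has $p = l_1 \cap l_2$ as a vertex, and if it were nondegenerate Gauss--Bonnet would force its interior to be flat, whence by continuity the curvature at the boundary vertex $p$ would be zero---contradicting $p \in \tilde U_1$. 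Hence the triangle degenerates, $l_3$ passes through $p$, and $p \sim p'$. The negative curvature at $p$ is used to force degeneracy \emph{in $\tilde g_1$}, not to contradict anything about $\tilde g_2$.
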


\begin{proof}
Let $p\in \tilde U_1$. Note that no $\tilde g_1$-geodesic through $p$ bounds a flat strip. Pick any two non-singular geodesics $l_1$ and $l_2$ through $p$. Then $l_1$ and $l_2$ are in $\hat{\mathcal{G}}^\circ_{\tilde g_1}$. Let $l_i'$ be corresponding $\tilde g_2$-geodesics, and let their intersection be $p'$ -- it is unique by Proposition \ref{prop:angles}. We also note that $l_i'$ are in $\hat{\mathcal{G}}^\circ_{\tilde g_2}$ by Corollary \ref{cor:good geodesics}.

Now let $l_3'$ be a geodesic in $\hat{\mathcal{G}}^\circ_{\tilde g_2}$ through $p'$. Note that almost every geodesic through $p'$ is in $\hat{\mathcal{G}}^\circ_{\tilde g_2}$, since, by the argument of Proposition \ref{prop:flat strip volume} only countably many flat strips can pass through any point.

Now consider the geodesic triangle formed by $l_1, l_2,$ and $l_3$. By the angle correspondence of Proposition \ref{prop:angles}, it has angle sum $\pi$, and since $\tilde g_1$ has negative curvature at $p$, the triangle must degenerate to the point $p$ by Gauss-Bonnet with singularities. Thus $l_3$ passes through $p$ and we have proven that $p\sim p'$, as desired.
\end{proof}

\begin{rem}
Note that $F_{neg}$ works in the same way $F_c$ of section \ref{sec:cone} does -- by taking $p$ to the common intersection of $\tilde g_2$-geodesics corresponding to a full measure set of $\tilde g_1$-geodesics passing through $p$.
\end{rem}

\begin{prop}
Let $g_1, g_2\in \mathscr{M}_{npc}(S)$ with the same marked length spectrum. Then the map $F_{neg}$ described above is a $\Gamma$-equivariant bijective isometry between $\tilde U_1$ and $\tilde U_2$.
\end{prop}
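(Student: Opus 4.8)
The plan is to show $F_{neg}\colon \tilde U_1\to\tilde U_2$ is both surjective onto $\tilde U_2$ and distance-preserving, using the same Liouville-current computation that proved $F_c$ is an isometry in Theorem~\ref{thm:cone geodesics}. First I would record that $F_{neg}$ is already a $\Gamma$-equivariant injection defined on all of $\tilde U_1$ (by the two preceding lemmas), and that by symmetry of the whole setup (reversing the roles of $g_1$ and $g_2$, legitimate now that Corollary~\ref{cor:neg now} puts both metrics in $\mathscr{M}_{npc}(S)$) the analogous map is defined on all of $\tilde U_2$. Surjectivity of $F_{neg}$ onto $\tilde U_2$ should follow because if $p\in\tilde U_1$ then the angle sum of the corresponding $\tilde g_2$-triangle through $p'=F_{neg}(p)$ is $\pi$, forcing, by Gauss--Bonnet with singularities, negative curvature (or a cone point) at $p'$; a cone point is ruled out since $F_c$ already accounts for all cone points of $\tilde g_2$ and $p$ is a non-cone point with $p\sim p'$, so $p'\in\tilde U_2$. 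Conversely the inverse map lands in $\tilde U_1$, giving a bijection $\tilde U_1\to\tilde U_2$.

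Next, for the isometry property, fix $p,q\in\tilde U_1$ and let $p'=F_{neg}(p)$, $q'=F_{neg}(q)$. Let $k$ be the $\tilde g_1$-geodesic segment from $p$ to $q$ and $k'$ the $\tilde g_2$-geodesic segment from $p'$ to $q'$ (both minimizing by Corollary~\ref{cor:minimizing}). The key claim is that, up to a set of endpoints lying in a countable set $\Omega'$ (which has $L_g$-measure zero for either Liouville current), the $\tilde g_1$-geodesics transversally crossing $k$ correspond exactly to the $\tilde g_2$-geodesics transversally crossing $k'$. The ``$\subset$'' direction uses that $F_{neg}$, like $F_c$, takes a point to the common intersection point of the $\tilde g_2$-geodesics corresponding to almost every $\tilde g_1$-geodesic through it (the Remark after Lemma~\ref{cff} Lemma 2.4): a generic $\tilde g_1$-geodesic in $\hat{\mathcal G}^\circ_{\tilde g_1}$ meeting $k$ at an interior point $x$ corresponds to a $\tilde g_2$-geodesic through $F_{neg}(x)$, and since $F_{neg}$ is a bijection $\tilde U_1\to\tilde U_2$ moving the subsegment $px$ to the subsegment $p'F_{neg}(x)$ of $k'$ (here one uses that $F_{neg}$ restricted to a geodesic segment between negative-curvature points is itself that of the corresponding geodesic, by the triangle-degeneration argument), that $\tilde g_2$-geodesic meets $k'$. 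The reverse inclusion is the same argument with the roles swapped. Throwing away the countably many geodesics with an endpoint in $\partial_\infty\mathcal G^2_{\tilde g_i}$, in a flat strip bordering a cone point, or failing the angle correspondence of Proposition~\ref{prop:angles}, we get the required measure-theoretic bijection. Then, exactly as in the displayed chain of equalities in the proof of Theorem~\ref{thm:cone geodesics},
\[
d_{\tilde g_1}(p,q) = L_{g_1}(\partial_{\tilde g_1}(\mathcal I(k))) = L_{g_2}(\partial_{\tilde g_2}(\mathcal I(k'))) = d_{\tilde g_2}(p',q'),
\]
using Propositions~\ref{prop:length} and~\ref{prop:preserves Liou} and the fact that the symmetric difference of the two geodesic sets is $L_g$-null.

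I expect the main obstacle to be making the correspondence ``$\tilde g_1$-geodesics crossing $k$ $\leftrightarrow$ $\tilde g_2$-geodesics crossing $k'$'' precise enough to be measure-preserving: one must verify that $F_{neg}$ carries the whole segment $k$ onto $k'$ (not just its endpoints), and that a geodesic crossing $k$ transversally at an interior point corresponds to one crossing $k'$ transversally --- the transversality and the non-degeneracy of the crossing angle being exactly where Proposition~\ref{prop:angles} and the exclusion of strip geodesics (Corollary~\ref{cor:good geodesics}) are needed. The fact that $F_{neg}$ maps $k$ to $k'$ follows because any interior point $x$ of $k$ lies in $\tilde U_1$ only if the curvature is negative there; if not, $x$ could fail to be in $\tilde D_1$, so one should instead argue that $x\sim x'$ for the unique point $x'$ on $k'$ at the corresponding distance, using the angle correspondence to see that a full-measure set of $\tilde g_1$-geodesics through $x$ corresponds to $\tilde g_2$-geodesics through a single point, and that this point must lie on $k'$ because $k$ itself corresponds to $k'$. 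This is a mild strengthening of the Remark following the previous lemma, valid because $p,q$ (the endpoints of $k$) are genuinely in $\tilde U_1$ and the geodesic $k$ between them is in $\hat{\mathcal G}^\circ_{\tilde g_1}$. Once this bookkeeping is in place, the Liouville-current length computation closes the argument verbatim from the cone-point case.
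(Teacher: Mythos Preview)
Your overall scheme---use the Liouville-current length formula to show $F_{neg}$ is distance-preserving, then reverse the roles of $g_1$ and $g_2$ to get bijectivity---is the same as the paper's. But two of your steps are either wrong or unnecessarily complicated, and the paper's ordering of the argument avoids both issues.

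First, your proof that $F_{neg}(p)\in\tilde U_2$ does not work. You write that ``the angle sum of the corresponding $\tilde g_2$-triangle through $p'$ is $\pi$, forcing by Gauss--Bonnet negative curvature at $p'$.'' But all three of the $\tilde g_2$-geodesics in question pass through the single point $p'$ (that is precisely how $p'$ was produced in the preceding lemma), so the ``triangle'' is degenerate and Gauss--Bonnet gives no information about the curvature at $p'$. The paper instead proves the metric isometry \emph{first}, with target merely $\tilde S$; since $\tilde U_1$ is open, a metric isometry on it is a Riemannian isometry (argue via short geodesic segments), and a Riemannian isometry preserves sectional curvature, whence $F_{neg}(\tilde U_1)\subset\tilde U_2$. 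Bijectivity then follows by swapping the two metrics. You should reorder your argument accordingly and drop the Gauss--Bonnet step entirely.

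Second, your worry about whether $F_{neg}$ carries the segment $k$ onto $k'$---and in particular about interior points of $k$ where the curvature might vanish---is a detour you do not need. The identification of $\partial_{\tilde g_1}(\mathcal G_{\tilde g_1}(k))$ with $\partial_{\tilde g_2}(\mathcal G_{\tilde g_2}(k'))$ up to a countable set can be carried out purely at the level of $\partial^\infty(\tilde S)$, using only the endpoints $p_1,p_2$ and their images: a geodesic $l$ crosses $k$ transversally iff its boundary pair separates a generic pair of boundary points of geodesics through $p_1$ from a generic such pair for $p_2$, and since $F_{neg}(p_i)$ is by definition the common intersection of the $\tilde g_2$-geodesics corresponding to almost all $\tilde g_1$-geodesics through $p_i$, the same separation criterion characterizes geodesics crossing $k'$. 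The only exceptions come from the (countably many) flat strips through $F_{neg}(p_1)$ or $F_{neg}(p_2)$. The paper does exactly this, in one sentence, without ever asking where interior points of $k$ go.
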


\begin{proof}
Let $p_1, p_2 \in \tilde U_1$, and let $k$ be the $\tilde g_1$-geodesic segment between them. Consider the set $\mathcal{G}_{\tilde g_1}(k)$ of all $\tilde g_1$-geodesics intersecting $k$ transversally. $F_{neg}(p_1)$ and $F_{neg}(p_2)$ lie in at most countably many flat strips. So it is not hard to see that all but countably many $\{a,b\} \in \partial_{\tilde g_1}(\mathcal{G}_{\tilde g_1}(k))$ correspond to elements of $\partial_{\tilde g_2}(\mathcal{G}_{\tilde g_2}(k'))$ where $k'$ is the geodesic segment connecting $F_{neg}(p_1)$ and $F_{neg}(p_2)$, and vice versa. Then, just as in the last step of the proof of Theorem \ref{thm:cone geodesics},

\begin{align}
	d_{\tilde g_1}(p_1, p_2) & = L_{g_1}(\partial_{\tilde g_1}(\mathcal{G}_{\tilde g_1}(k))) \nonumber \\
						& = L_{g_2}(\partial_{\tilde g_2}(\mathcal{G}_{\tilde g_2}(k'))) \nonumber \\
						& = d_{\tilde g_2}(F_{neg}(p_1),F_{neg}(p_2)). \nonumber
\end{align}

We now know that $F_{neg}:\tilde U_1 \to \tilde S$ is a metric isometry; it is clearly $\Gamma$-invariant.  By considering this metric isometry applied to short geodesic segments in $\tilde U_1$, it is easy to see that it must in fact be a Riemannian isometry, and a homeomorphism onto its image. Therefore it must take $\tilde U_1$ into $\tilde U_2$. Reversing the roles of the two metrics proves it is bijective, completing the proof.
\end{proof}

%

\section{Building the full isometry}

In sections \ref{sec:cone} and \ref{sec:neg} we built $\Gamma$-equivariant isometries $F_c:cone(\tilde g_1)\to cone(\tilde g_2)$ and $F_{neg}:\tilde U_1 \to \tilde U_2$ between the cone points and points of negative curvature, respectively. We first note that both of these maps are defined in the same way -- by showing that the correspondence between $\tilde g_1$-geodesics and $\tilde g_2$-geodesics takes (a full measure set of) geodesics through a point $p$ to (a full measure set of) geodesics through $F_{-}(p)$. The proof that the maps are isometries is the same in each case. Therefore, these maps can be combined into a single isometry:

\begin{prop}
$F':=F_c \cup F_{neg}: cone(\tilde g_1) \cup \tilde U_1 \to cone(\tilde g_2) \cup \tilde U_2$ is a $\Gamma$-equivariant isometry.
\end{prop}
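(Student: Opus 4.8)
The plan is to verify that $F_c$ and $F_{neg}$ agree on the overlap of their domains and then conclude that their union is a well-defined $\Gamma$-equivariant isometry. The first thing to check is that the domains are disjoint, or, if they are not, that the two maps coincide there. By definition $\tilde U_1$ consists of \emph{non-cone} points, so $\tilde U_1 \cap cone(\tilde g_1) = \emptyset$; likewise on the $\tilde g_2$ side. Hence $F'$ is well-defined as a map of sets simply by declaring it to be $F_c$ on $cone(\tilde g_1)$ and $F_{neg}$ on $\tilde U_1$, and it is automatically $\Gamma$-equivariant since each piece is.

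The substantive point is that $F'$ is an isometry of the (disconnected, a priori) metric space $cone(\tilde g_1) \cup \tilde U_1$ with the restricted distance from $(\tilde S, \tilde g_1)$ — that is, $d_{\tilde g_1}(p,q) = d_{\tilde g_2}(F'(p),F'(q))$ for \emph{all} pairs $p,q$, including one cone point and one negative-curvature point. As the remark preceding the proposition stresses, both $F_c$ and $F_{neg}$ are built the same way: $F_-(p)$ is the common intersection point of the $\tilde g_2$-geodesics corresponding to a full-measure set of $\tilde g_1$-geodesics through $p$. So the distance computation is literally the same in all cases. Given $p \in cone(\tilde g_1) \cup \tilde U_1$ and $q \in cone(\tilde g_1) \cup \tilde U_1$, let $k$ be the $\tilde g_1$-geodesic segment from $p$ to $q$ and $k'$ the $\tilde g_2$-geodesic segment from $F'(p)$ to $F'(q)$. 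The key observation, already used in the proofs of Theorem~\ref{thm:cone geodesics} and of the previous proposition, is that since $F'(p)$ and $F'(q)$ each lie in at most countably many strips (by Proposition~\ref{prop:flat strip volume}, now that both metrics are in $\mathscr{M}_{npc}(S)$ by Corollary~\ref{cor:neg now}, together with the fact that cone points do not lie in the interior of a strip), all but countably many elements of $\partial_{\tilde g_1}(\mathcal{G}_{\tilde g_1}(k))$ correspond to elements of $\partial_{\tilde g_2}(\mathcal{G}_{\tilde g_2}(k'))$, and conversely. Since the set of geodesics with an endpoint in a fixed countable set is $L_g$-null, Propositions~\ref{prop:length} and~\ref{prop:preserves Liou} give
\begin{align}
	d_{\tilde g_1}(p,q) & = L_{g_1}(\partial_{\tilde g_1}(\mathcal{I}(k))) = L_{g_1}(\partial_{\tilde g_1}(\mathcal{G}_{\tilde g_1}(k))) \nonumber \\
		& = L_{g_2}(\partial_{\tilde g_2}(\mathcal{G}_{\tilde g_2}(k'))) = L_{g_2}(\partial_{\tilde g_2}(\mathcal{I}(k'))) = d_{\tilde g_2}(F'(p),F'(q)). \nonumber
\end{align}
This is exactly the computation in the two earlier proofs, now applied uniformly.

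The one wrinkle I would flag as the main point to get right is the mixed case: when $p$ is a cone point and $q$ is a negative-curvature point (or vice versa). Here one must be sure that $F'(q) = F_{neg}(q)$ really is the correct image to make $\partial_{\tilde g_2}(\mathcal{G}_{\tilde g_2}(k'))$ capture the right geodesics — i.e. that the \emph{same} correspondence on $\mathcal{G}_{\tilde g_1} \to \mathcal{G}_{\tilde g_2}$ (coming from equality of endpoints at infinity, via $\partial_{\tilde g_i}$) underlies both $F_c$ and $F_{neg}$. This is precisely what the remark asserts: both maps send $p$ to the intersection of $\tilde g_2$-geodesics corresponding (in this endpoint sense) to a full-measure family of $\tilde g_1$-geodesics through $p$. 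Since the correspondence is metric-independent, a full-measure family of $\tilde g_1$-geodesics through $p$ meeting a full-measure family through $q$ corresponds to $\tilde g_2$-geodesics through $F'(p)$ meeting those through $F'(q)$, and transversality/intersection data transfers by the ordering of endpoints on $\partial^\infty(\tilde S)$ (as noted after Theorem~\ref{thm:intersection length}). Once this is in hand the displayed chain of equalities goes through verbatim, and $F'$ is a $\Gamma$-equivariant isometry.
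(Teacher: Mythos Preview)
Your proof is correct and follows essentially the same approach as the paper: the paper states the proposition without a formal proof, noting in the preceding paragraph that both $F_c$ and $F_{neg}$ are defined via the same geodesic correspondence and that ``the proof that the maps are isometries is the same in each case.'' You have simply made this explicit, carefully spelling out the disjointness of domains and the Liouville-current distance computation (including the mixed cone/negative-curvature case), which is exactly the argument the paper intends.
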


What remains is to extend this isometry to the set of points of curvature zero. To do so we follow ideas of both Croke-Fathi-Feldman and Bankovic-Leininger.

The following two lemmas will be useful.

\begin{lem}\label{lem:seg intersect}
Let $p_1, p_2, q_1, q_2 \in cone(\tilde g_1) \cup \tilde U_1$. If the $\tilde g_1$-geodesic segments $[p_1, p_2]$ and $[q_1, q_2]$ intersect in their interiors, then the $\tilde g_2$-geodesic segments $[F'(p_1), F'(p_2)]$ and $[F'(q_1), F'(q_2)]$ intersect in their interiors.
\end{lem}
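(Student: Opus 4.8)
\textbf{Proof plan for Lemma \ref{lem:seg intersect}.}

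The plan is to transfer the intersection statement from the $\tilde g_1$-picture to the $\tilde g_2$-picture using the fact, established in Sections \ref{sec:cone}--\ref{sec:neg}, that $F'$ is the ``point'' incarnation of the endpoint correspondence $\partial_{\tilde g_1}(l) = \partial_{\tilde g_2}(l')$, together with the observation recorded after Theorem \ref{thm:intersection length} that transverse intersection of geodesics is a purely combinatorial condition on the cyclic order of their endpoints in $\partial^\infty(\tilde S)$. First I would note that each segment $[p_1,p_2]$ extends to a full $\tilde g_1$-geodesic $l$, and $[q_1,q_2]$ to a full $\tilde g_1$-geodesic $m$; since the segments cross in their interiors, $l$ and $m$ cross transversally, so the four endpoints $l(\pm\infty), m(\pm\infty)$ are linked in $\partial^\infty(\tilde S)$. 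The crucial point is that, by the construction of $F_c$ and $F_{neg}$ (see the Remark following the proof of Lemma \ref{cff}, Lemma 2.4, and the final step of Theorem \ref{thm:cone geodesics}), a full measure set of $\tilde g_1$-geodesics through $p_i$ corresponds to $\tilde g_2$-geodesics through $F'(p_i)$; applying this to both $p_1$ and $p_2$ (and to $q_1, q_2$) and using that only countably many exceptional geodesics occur, I can choose a $\tilde g_1$-geodesic through $p_1$ and $p_2$ --- indeed, after a small perturbation, the geodesic containing the segment $[p_1,p_2]$ itself, or a nearby one if necessary --- whose corresponding $\tilde g_2$-geodesic passes through both $F'(p_1)$ and $F'(p_2)$; since two points determine at most a strip of geodesics and these are points of negative curvature or cone points (hence no strip through them), this forces the corresponding $\tilde g_2$-geodesic to be the one containing $[F'(p_1),F'(p_2)]$.

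With that identification in hand, the endpoint correspondence gives $\partial_{\tilde g_2}([F'(p_1),F'(p_2)]) = \partial_{\tilde g_1}([p_1,p_2])$ and likewise for the $q$-segment, so the four endpoints of the two $\tilde g_2$-geodesics are linked in exactly the same way as the four endpoints of the two $\tilde g_1$-geodesics. By Corollary \ref{cor:minimizing} the $\tilde g_2$-geodesics are minimizing and by Proposition \ref{prop:no two int} they meet in at most one connected component, so linked endpoints force them to cross transversally in a single point $x$. It remains to check that $x$ lies in the \emph{interiors} of the segments $[F'(p_1),F'(p_2)]$ and $[F'(q_1),F'(q_2)]$, not beyond the endpoints: this follows because $F'$ is an isometry (Proposition 8.0), so $d_{\tilde g_2}(F'(p_1),F'(p_2)) = d_{\tilde g_1}(p_1,p_2)$, and the intersection point of the full geodesics, being the image under the endpoint correspondence of the interior intersection point in the $\tilde g_1$-picture, must sit at a parameter strictly between the two endpoints --- one can see this by applying the isometry $F'$ to nearby points $p_1, p_2, q_1, q_2$ or simply by noting that if $x$ were outside $[F'(p_1),F'(p_2)]$ then reversing the roles of $g_1$ and $g_2$ would contradict that the original intersection was interior.

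The main obstacle I anticipate is the step of pinning down that the $\tilde g_2$-geodesic \emph{segment} $[F'(p_1),F'(p_2)]$ is the one whose endpoints correspond to $\partial_{\tilde g_1}([p_1,p_2])$: $F'$ is only known a priori to send $p_i$ to $F'(p_i)$ via a full-measure family of geodesics, and the specific geodesic through $p_1$ and $p_2$ might lie in the exceptional countable set. The way around this is exactly the ``no strip through points of negative curvature or cone points'' fact (Proposition \ref{prop:flat strip volume} and the structure of cone points), which guarantees the geodesic through $p_1$ and $p_2$ is unique; combined with a limiting argument using a sequence of non-exceptional geodesics through $p_1$ (resp. $p_2$) whose corresponding $\tilde g_2$-geodesics converge, one forces the correspondence to respect this particular geodesic too, and hence its endpoints. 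Everything else is a routine translation between the combinatorics of $\partial^\infty(\tilde S)$ and the geometry of minimizing geodesics in the two metrics.
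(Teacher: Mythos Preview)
Your overall strategy---extend the segments to full geodesics, use that $F'$ is built from the endpoint correspondence so that the $\tilde g_2$-geodesic through $F'(p_1),F'(p_2)$ has the same endpoints at infinity as the one through $p_1,p_2$, and then invoke the linking criterion for intersection---is exactly the right framework, and it matches the paper's opening move. The ``obstacle'' you spend most of your energy on (whether the particular geodesic through $p_1$ and $p_2$ might be exceptional) is in fact minor: the paper simply says it is ``immediate'' from the way $F'$ is defined, and your limiting/density argument is an acceptable justification.

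The genuine gap is in the \emph{interior} step. Knowing that the full $\tilde g_2$-geodesics $l'$ and $h'$ cross does not tell you \emph{where} on $l'$ the crossing occurs relative to $F'(p_1)$ and $F'(p_2)$; equivalently, you have not controlled the orientation of $F'(p_1),F'(p_2)$ along $l'$. Your option (a) does not work because the intersection point in the $\tilde g_1$-picture need not lie in $cone(\tilde g_1)\cup\tilde U_1$ (it may sit in a flat region), so $F'$ is not defined there and the isometry statement gives no leverage. Your option (b), ``reversing the roles of $g_1$ and $g_2$'', is circular: to run the argument backwards you would need to know that the $\tilde g_2$-segments intersect in their interiors, which is precisely the conclusion.

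The paper supplies the missing idea with an auxiliary-geodesic trick. Through $q_1$ draw a geodesic $l_1$ (from the full-measure family defining $F'$) whose endpoints do \emph{not} link those of $l$, and similarly $l_2$ through $q_2$; then $l$ lies entirely in the strip between $l_1$ and $l_2$. Since $l_1',l_2'$ pass through $F'(q_1),F'(q_2)$ and have the same boundary endpoints as $l_1,l_2$, the geodesic $l'$ is trapped between $l_1'$ and $l_2'$, so its crossing with $h'$ is forced to lie on the subsegment $[F'(q_1),F'(q_2)]$. The correct ``reversal'' is then of the roles of the $p_i$ and the $q_i$ (not of $g_1$ and $g_2$), which gives the interior condition on $[F'(p_1),F'(p_2)]$ as well. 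This converts the interior requirement into a purely boundary-combinatorial statement, which is exactly what your argument was missing.
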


\begin{proof}
Recall that $F'(p)$ is the common intersection point of $\tilde g_2$-geodesics corresponding to almost every $\tilde g_1$-geodesic through $p$. From this it is immediate that any geodesic $l$ through $p_1$ and $p_2$ corresponds to a geodesic through $F'(p_1)$ and $F'(p_2)$. Then the configuration of geodesics in $\hat{\mathcal{G}}^-_{\tilde g_1}$ in the left half of Figure \ref{fig:crossing} must correspond to a configuration as in the right half, where $h'$ and $l'$ intersect between $l'_1$ and $l'_2$. 

\begin{center}
\setlength{\unitlength}{.28mm}
\begin{picture}(440,200)(-220,-90)

\put(-150, 95){$(\tilde S, \tilde g_1)$}
	
\qbezier(-220,0)(-220,80)(-140,80)
\qbezier(-220,0)(-220,-80)(-140,-80)
\qbezier(-60,0)(-60,80)(-140,80)
\qbezier(-60,0)(-60,-80)(-140,-80)

\put(-40,0){\vector(1,0){60}}
\put(-10,10){$F'$}

\put(110,95){$(\tilde S, \tilde g_2)$}

\qbezier(40,0)(40,80)(120,80)
\qbezier(200,0)(200,80)(120,80)
\qbezier(40,0)(40,-80)(120,-80)
\qbezier(200,0)(200,-80)(120,-80)
	
\put(-140,40){\circle*{2}}
\put(-155,45){$q_1$}
\put(-140,-40){\circle*{2}}
\put(-155,-50){$q_2$}
\put(-180,0){\circle*{2}}
\put(-190,-10){$p_1$}
\put(-100,0){\circle*{2}}
\put(-110,-10){$p_2$}

\linethickness{0.35mm}
\put(-140,40){\line(0,-1){80}}
\put(-180,0){\line(1,0){80}}

\linethickness{0.1mm}
\qbezier(-140,40)(-140,60)(-130,79)
\qbezier(-140,-40)(-140,-60)(-150,-79)
\qbezier(-180,0)(-200,0)(-220,10)
\qbezier(-100,0)(-80,0)(-60,10)

\put(-75,8){$l$}
\put(-130,65){$h$}

\qbezier(-211,45)(-140,35)(-69,45)
\qbezier(-211,-45)(-140,-35)(-69,-45)

\put(-190,46){$l_1$}
\put(-190,-53){$l_2$}

\put(150,40){\circle*{2}}
\put(130,-40){\circle*{2}}
\put(60,0){\circle*{2}}
\put(120,10){\circle*{2}}

\put(150,40){\line(-1,-4){20}}
\put(150.5,40){\line(-1,-4){20}}
\put(149.5,40){\line(-1,-4){20}}
\put(60,0){\line(6,1){60}}
\put(62,0){\line(6,1){60}}
\put(58,0){\line(6,1){60}}

\qbezier(150,40)(150,60)(140,78)
\qbezier(130,-40)(130,-60)(140,-78)
\qbezier(60,0)(50,-2)(40,-5)
\qbezier(120,10)(160,15)(200,10)

\qbezier(60,60)(150,37)(195,35)
\qbezier(60,-60)(150,-25)(190,-47)

\put(187,15){$l'$}
\put(150,60){$h'$}
\put(80,59){$l_1'$}
\put(80,-65){$l_2'$}
\put(133,-52){$F'(q_2)$}
\put(152,42){$F'(q_1)$}

\put(45,-15){$F'(p_1)$}
\put(100,15){$F'(p_2)$}

\end{picture}
\begin{figure}[h]
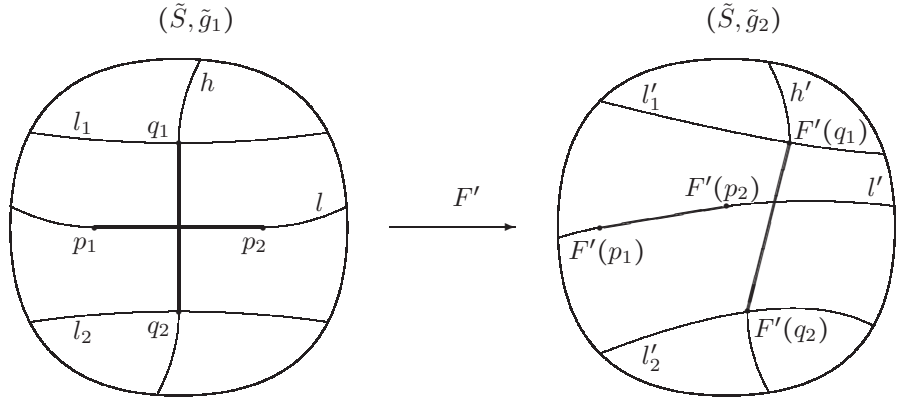

\caption{$F'$ applied to intersecting segments.}\label{fig:crossing}
\end{figure}

\end{center}

\noindent (Note that such a configuration can be drawn, with $l$, $l_1$ and $l_2$ having distinct endpoints, because none of the $p_i, q_i$ can lie in the interior of a flat strip.)

Reversing the roles of $p_i$ and $q_i$, we see that Figure \ref{fig:crossing} is inaccurate as drawn. In fact $h'$ must cross $l'$ between $F'(p_1)$ and $F'(p_2)$, proving the lemma.

\end{proof}

\begin{lem}\label{lem:triangle}
Let $p_1, p_2, p_3$ and $q$ be points in $cone(\tilde g_1)\cup \tilde U_1$ such that $q$ lies in the interior of the $\tilde g_1$-geodesic triangle formed by $p_1, p_2$, and $p_3$. Then $F'(q)$ lies in the interior of the $\tilde g_2$-geodesic triangle formed by $F'(p_1), F'(p_2)$, and $F'(p_3)$.
\end{lem}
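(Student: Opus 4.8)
The plan is to reduce the statement about a point inside a triangle to repeated applications of Lemma \ref{lem:seg intersect}, which already tells us that $F'$ preserves the transverse intersection of geodesic segments with endpoints in $cone(\tilde g_1) \cup \tilde U_1$. First I would observe that $q$ lying in the interior of the $\tilde g_1$-geodesic triangle $\Delta$ on $p_1, p_2, p_3$ means that $q$ lies on a geodesic segment from, say, $p_1$ to a point $r$ in the interior of the opposite side $[p_2, p_3]$. The difficulty is that $r$ is not a priori a point of $cone(\tilde g_1) \cup \tilde U_1$, so $F'(r)$ is not yet defined and Lemma \ref{lem:seg intersect} does not apply directly to the segments $[p_1, r]$ and $[p_2, p_3]$.

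To get around this, I would characterize membership of $F'(q)$ in the interior of the image triangle $\Delta' = [F'(p_1), F'(p_2)] \cup [F'(p_2), F'(p_3)] \cup [F'(p_1), F'(p_3)]$ purely in terms of crossing conditions among segments with endpoints in $cone(\tilde g_1) \cup \tilde U_1$. Concretely, $F'(q)$ is in the interior of $\Delta'$ if and only if: (a) $F'(q)$ does not lie on any of the three sides of $\Delta'$, and (b) for each vertex, the segment from that vertex to $F'(q)$, extended slightly, crosses the interior of the opposite side; equivalently, $F'(q)$ is "on the same side" of each of the three side-lines as the opposite vertex. Since $(\tilde S, \tilde g_2)$ is nonpositively curved and simply connected, hence $\mathrm{CAT}(0)$ away from cone points — or more simply since its geodesics are minimizing and no two distinct geodesics cross twice (Proposition \ref{prop:no two int}) — these sidedness conditions are detected by intersection patterns of geodesic segments. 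So it suffices to verify each such intersection or non-intersection statement for the $\tilde g_2$ picture by transporting it back via $F'^{-1}$ to the $\tilde g_1$ picture, where it holds because $q$ is genuinely inside $\Delta$, and applying Lemma \ref{lem:seg intersect} (in both directions, using that $F'$ is a bijection with $F'^{-1}$ having the same description).

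In a little more detail, I would take a fourth auxiliary point: choose $p_4 \in cone(\tilde g_1) \cup \tilde U_1$ lying beyond $q$ on the $\tilde g_1$-geodesic ray from $p_1$ through $q$, far enough that $p_4$ is outside $\Delta$ and the segment $[p_1, p_4]$ crosses both $[p_2, p_3]$ (in its interior, at the point $r$) and passes through $q$. Such a $p_4$ exists because $cone(\tilde g_1) \cup \tilde U_1$ is dense-ish enough along any geodesic — or, if one worries it is not, one replaces this with a limiting argument using that the negative-curvature-or-cone locus is nonempty and $\Gamma$ acts cocompactly, so geodesics through $\tilde U_1 \cup cone(\tilde g_1)$ are dense in direction at any point, which is exactly the content used in Section \ref{sec:neg}. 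Then Lemma \ref{lem:seg intersect} applied to $[p_1,p_4]$ against $[p_2,p_3]$, $[p_1,p_2]$, and $[p_1,p_3]$ pins down $F'(q)$ (as the image of $q$, which is the intersection of $[p_1,p_4]$ with a suitable transversal through $q$ with endpoints in the good set) to the correct side of each edge of $\Delta'$; doing this for all three vertices of $\Delta$ gives that $F'(q)$ is interior to $\Delta'$.

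The main obstacle I anticipate is precisely the bookkeeping around points like $r$ (and $p_4$) that are not in the domain of $F'$: one must phrase everything so that only segments with endpoints in $cone(\tilde g_1) \cup \tilde U_1$ ever appear, and argue that the collection of such segments through or near $q$ is rich enough to detect "interior of the triangle" unambiguously. Once the problem is set up so that every assertion is a statement about transverse crossings of admissible segments, Lemma \ref{lem:seg intersect} together with the no-double-crossing property of geodesics in these metrics (Proposition \ref{prop:no two int}) closes it. I would also note at the end that the same argument with $\tilde g_1$ and $\tilde g_2$ interchanged shows the converse inclusion, so $F'$ matches triangle interiors exactly — which is what will be needed to extend $F'$ continuously over the flat locus in the next step.
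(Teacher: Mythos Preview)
Your instinct to reduce to sidedness conditions is exactly right, and that is what the paper does --- but the paper does \emph{not} route the argument through Lemma~\ref{lem:seg intersect}. Your reduction to that lemma has a real gap: the auxiliary point $p_4$ on the ray from $p_1$ through $q$, lying beyond $[p_2,p_3]$ and in $cone(\tilde g_1)\cup\tilde U_1$, need not exist. In the extreme case where $g_1$ is flat away from cone points, $\tilde U_1$ is empty and a generic geodesic ray hits no cone point at all, so there is simply no admissible $p_4$ on that ray. Your suggested ``limiting argument'' does not repair this: perturbing the direction at $p_1$ moves the ray off $q$, and since $F'$ is only defined on the good set you cannot pass to a limit to locate $F'(q)$ on the perturbed image segments. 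Moreover, even granting $p_4$, the claim that segment-crossing patterns among the six segments on $\{p_1,p_2,p_3,q\}$ alone pin down which point is interior is false in general (four points with one in the triangle of the other three have \emph{no} crossings regardless of which one is interior), so you really do need the collinearity/isometry information along $[p_1,p_4]$ that your argument tries to supply.

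The paper sidesteps all of this by working with full bi-infinite geodesics and the boundary at infinity rather than segments. For the side $[p_2,p_3]$, let $l$ be the complete $\tilde g_1$-geodesic through $p_2,p_3$; since $q$ is interior (hence not in the interior of any flat strip) one can choose a $\tilde g_1$-geodesic $h$ through $q$ from the full-measure family defining $F'(q)$ with both endpoints in $\partial^\infty(\tilde S)$ on the $p_1$-side of $l$. The corresponding $\tilde g_2$-geodesic $h'$ passes through $F'(q)$ and has the same endpoints at infinity, while $l'$ passes through $F'(p_2),F'(p_3)$; since ``both endpoints on one side'' is purely a boundary condition, $h'$ does not cross $l'$ and $F'(q)$ lies in the open half-space bounded by $l'$ containing $F'(p_1)$. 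Running this for all three sides gives the lemma. This is the same mechanism that underlies Lemma~\ref{lem:seg intersect}, applied directly --- no auxiliary good-set points are needed.
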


\begin{center}
\setlength{\unitlength}{.28mm}
\begin{picture}(440,200)(-220,-100)
	
\qbezier(-220,0)(-220,80)(-140,80)
\qbezier(-220,0)(-220,-80)(-140,-80)
\qbezier(-60,0)(-60,80)(-140,80)
\qbezier(-60,0)(-60,-80)(-140,-80)

\put(-40,0){\vector(1,0){60}}
\put(-10,10){$F'$}

\qbezier(40,0)(40,80)(120,80)
\qbezier(200,0)(200,80)(120,80)
\qbezier(40,0)(40,-80)(120,-80)
\qbezier(200,0)(200,-80)(120,-80)

\put(-140,40){\circle*{4}}
\put(-190,-32){\circle*{4}}
\put(-100,-29){\circle*{4}}
\put(-140,0){\circle*{4}}

\put(120,40){\circle*{4}}
\put(90,-30){\circle*{4}}
\put(150,-30){\circle*{4}}

\qbezier(-213,-40)(-140,-10)(-67,-40)
\qbezier(-210,-48)(-140,0)(-131,80)
\qbezier(-74,-53)(-140,0)(-149,80)

\qbezier(-220,10)(-140,-10)(-60,10)

\put(-140,-40){$l$}
\put(-200,10){$h$}
\put(-140,-10){$q$}

\qbezier(47,-40)(120,-15)(193,-40)
\qbezier(64,-64)(120,0)(126,80)
\qbezier(175,-65)(120,0)(113,79)
\qbezier(40,0)(120,20)(186,52)

\put(60,10){$h'$}
\put(120,-41){$l'$}

\end{picture}

\begin{figure}[hh]
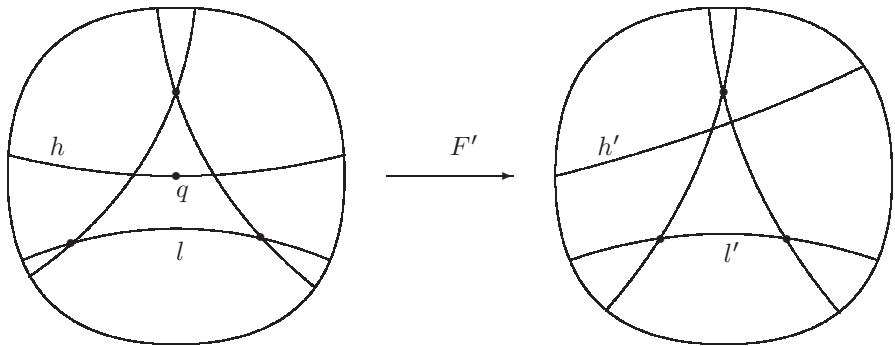

\caption{$F'$ applied to geodesic triangles.}\label{fig:triangle}
\end{figure}

\end{center}

\begin{proof}
Figure \ref{fig:triangle} shows that $F'(q)$ must lie in the open half-space bounded by $l'$ containing $\Delta(F'(p_1),F'(p_2),F'(p_3))$. Running this argument for all three sides of $\Delta(p_1,p_2,p_3)$ proves the lemma. (Again, the configuration in this figure can be drawn since $q$ does not belong to the interior of any flat strip and so $h$ can be taken to have distinct endpoints from $l$.)
\end{proof}

Note that we can assume that $cone(g_1)$ (and hence $cone(g_2)$ by Theorem \ref{thm:cone geodesics}) are non-empty -- otherwise Croke-Fathi-Feldman applies. Then we may take a $\Gamma$-invariant geodesic triangulation $\tilde \tau$ of $(\tilde S,\tilde g_1)$ such that the vertices are precisely the cone points. (This is an easy construction). The lemmas above show that $F'$ sends this triangulation to a triangulation of $(\tilde S, \tilde g_2)$:

\begin{lem}
Let $F'(\tilde \tau)$ be the collection of $\tilde g_2$-geodesic segments $\{[F'(p_1),F'(p_2)]: [p_1,p_2]\in\tau\}.$ Then $F'(\tilde \tau)$ is a $\Gamma$-invariant triangulation of $(\tilde S,\tilde g_2)$ with vertex set precisely $cone(\tilde g_2)$. Further, if $q\in cone(\tilde g_1)\cup \tilde U_1$ belongs to a triangle $T$ of $\tilde \tau$, then $F'(q)\in F'(T)$, where $F'(T)$ is the triangle formed by the $F'$ images of the vertices of $T$.
\end{lem}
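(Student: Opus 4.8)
Here is the plan I would follow.

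The strategy is to transport the combinatorics of $\tilde\tau$ to $(\tilde S,\tilde g_2)$ using only three inputs: that $F'$ restricts on $cone(\tilde g_1)$ to the $\Gamma$-equivariant bijective isometry $F_c$ onto $cone(\tilde g_2)$ (Theorem \ref{thm:cone geodesics}); that $F'$ is a $\Gamma$-equivariant isometry on $cone(\tilde g_1)\cup\tilde U_1$; and that interior intersections of geodesic segments and containment of a point in a geodesic triangle, for points of $cone(\tilde g_1)\cup\tilde U_1$, are preserved by $F'$ in \emph{both} directions (Lemmas \ref{lem:seg intersect} and \ref{lem:triangle}, applied with the roles of $g_1$ and $g_2$ exchanged when needed, the whole construction being symmetric). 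Since $\tilde g_1,\tilde g_2\in\mathscr{M}_{npc}(S)\subset\mathscr{M}_{ncpc}(S)$ by Corollary \ref{cor:neg now}, Proposition \ref{prop:no two int} gives uniqueness of geodesic segments between points in either metric, so each edge $[F'(p_1),F'(p_2)]$ of $F'(\tilde\tau)$ is well defined and, by Corollary \ref{cor:minimizing}, is an embedded arc of length $d_{\tilde g_2}(F_c(p_1),F_c(p_2))=d_{\tilde g_1}(p_1,p_2)$.

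First I would dispatch the bookkeeping. Because $F_c$ is a $\Gamma$-equivariant bijection from the vertex set $cone(\tilde g_1)$ of $\tilde\tau$ onto $cone(\tilde g_2)$, the vertex set of $F'(\tilde\tau)$ is exactly $cone(\tilde g_2)$; $\Gamma$-invariance follows from $\Gamma$-equivariance of $F'$ together with uniqueness of geodesics, and local finiteness from $\tilde\tau$ having finitely many edge- and face-orbits under the properly discontinuous, cocompact $\Gamma$-action. Next, distinct edges of $F'(\tilde\tau)$ meet only at common endpoints: if two met at a point interior to both, Lemma \ref{lem:seg intersect} would produce an interior intersection of two edges of $\tilde\tau$; and no edge $[F'(p_1),F'(p_2)]$ can pass through a third vertex $F_c(s)$, for then the isometry property gives $d_{\tilde g_1}(p_1,s)+d_{\tilde g_1}(s,p_2)=d_{\tilde g_1}(p_1,p_2)$, forcing $s\in[p_1,p_2]$ by uniqueness, which is impossible in the triangulation $\tilde\tau$. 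Hence for every face $T=\Delta(p_1,p_2,p_3)$ of $\tilde\tau$ the three edges $[F'(p_i),F'(p_j)]$ form an embedded circle in $\tilde S\cong\mathbb{R}^2$ and bound a closed topological disk $F'(T)$, a nondegenerate geodesic triangle for $\tilde g_2$.

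Then I would show the $F'(T)$ tile $\tilde S$. The crucial point is that $\mathrm{int}\,F'(T)$ is disjoint from the whole $1$-skeleton of $F'(\tilde\tau)$: it contains no vertex $F_c(s)$, since otherwise $s\in\mathrm{int}\,\Delta(p_1,p_2,p_3)$ by Lemma \ref{lem:triangle}, impossible in $\tilde\tau$; and no edge $[F'(a),F'(b)]$ can meet it, since such an edge, having both endpoints outside $\mathrm{int}\,F'(T)$, would have to cross one of the three bounding edges, contradicting Lemma \ref{lem:seg intersect}. Consequently each $\mathrm{int}\,F'(T)$ is a single complementary region of the connected, locally finite embedded graph $F'(\tilde\tau)$. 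Every edge $[F'(p_1),F'(p_2)]$ bounds exactly two complementary regions, which must be the distinct disks $F'(T),F'(T')$ attached to the two faces $T,T'$ of $\tilde\tau$ along $[p_1,p_2]$; since every complementary region of a connected embedded graph borders some edge, the complementary regions of $F'(\tilde\tau)$ are precisely the $\mathrm{int}\,F'(T)$, so $\tilde S=\bigcup_T F'(T)$ with pairwise disjoint interiors. Thus $F'(\tilde\tau)$ is a $\Gamma$-invariant triangulation with vertex set $cone(\tilde g_2)$, and no face contains a cone point in its interior since each $F_c(q)$ lies in the $1$-skeleton. For the last assertion: if $q\in cone(\tilde g_1)\cup\tilde U_1$ lies in a face $T$ and $q$ is a vertex, $F'(q)\in F'(T)$ is trivial; if $q$ is interior to an edge $[p_i,p_j]$ (hence $q\in\tilde U_1$), distance additivity transported by the isometry $F'$ gives $F'(q)\in[F'(p_i),F'(p_j)]\subset F'(T)$; and if $q$ is interior to $T$, this is exactly Lemma \ref{lem:triangle}.

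The main obstacle is the tiling step, and the reason it is delicate is that when $g_1$ has flat regions the domain $cone(\tilde g_1)\cup\tilde U_1$ of $F'$ can be as sparse as the discrete set $cone(\tilde g_1)$ itself; then $F'$ carries no metric information on any neighborhood of a cone point, so one cannot argue by continuity or local injectivity of $F'$ and must squeeze everything out of the purely combinatorial content of Lemmas \ref{lem:seg intersect} and \ref{lem:triangle} -- which crossings occur among the finitely many segments joining the relevant cone points, and which cone points lie in which geodesic triangles. The outline above is arranged precisely so that only those two lemmas (and their symmetric versions) are needed, with no appeal to the local rotation system or to a degree argument.
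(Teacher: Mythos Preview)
Your proof is correct and follows essentially the same approach as the paper's: both use Lemma~\ref{lem:seg intersect} (and its symmetric version) to show the image edges do not cross, then argue that the resulting embedded planar graph with two triangles along each edge must be a triangulation, and invoke Lemma~\ref{lem:triangle} for the containment claim. Your version is more explicit on the tiling step---showing that each $\mathrm{int}\,F'(T)$ is a full complementary region and that these exhaust the complement of the $1$-skeleton---where the paper simply asserts that a locally finite planar graph whose every edge lies in two $3$-cycles on opposite sides is a triangulation; you also handle the edge case of the final assertion via distance additivity, which the paper leaves implicit.
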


\begin{proof}
That $F'(\tilde\tau)$ is $\Gamma$-invariant with vertex set $cone(\tilde g_2)$ is clear. That it preserves containment in triangles is an immediate application of Lemma \ref{lem:triangle}.

Every segment in $\{[F'(p_1),F'(p_2)]: [p_1,p_2]\in\tilde \tau\}$ belongs to two 3-cycles in $F'(\tilde \tau)$, thought of as an abstract graph, on opposite sides of that edge, since this is true for $\tau$. By Lemma \ref{lem:seg intersect} these segments do not intersect away from the vertices. Then the geometric realization of this graph in the simply connected $\tilde S$ is an infinite, locally finite, planar graph, all of whose edges belong to two triangles on opposite sides of the edge. Therefore it is a triangulation. 
\end{proof}

To complete the proof of Theorem \ref{main thm} we need to extend $F'$ to a $\Gamma$-equivariant isometry on all of $(\tilde S,\tilde g_1)$.

\begin{proof}[Proof of Theorem \ref{main thm}]

Fix a $\Gamma$-invariant geodesic triangulation $\tilde \tau$ of $(\tilde S, \tilde g_1)$ as above.

Let $\mathcal{T}_0(\tilde \tau)$ be the set of all triangles of $\tilde \tau$ which have $\tilde g_1$-flat interior. Since $F'$ respects containment of points with negative curvature in triangles, the induced map from triangles of $\tilde \tau$ to triangles of $F'(\tilde \tau)$ takes $\mathcal{T}_0(\tilde \tau)$ bijectively to $\mathcal{T}_0(F'(\tilde \tau))$. Let $V_1 = \{ p\in \bar T: T\in \mathcal{T}_0(\tilde \tau)\}$ and $V_2 = \{ p\in \bar T: T\in \mathcal{T}_0(F'(\tilde \tau))\}$. Since $F'$ is an isometry on the vertices of these Euclidean triangles, it can be extended to $F'_0:cone(\tilde g_1)\cup U_1 \cup V_1 \to cone(\tilde g_2)\cup U_2 \cup V_2$ as a Riemannian isometry away from the cone points. It is clear that $F'_0$ is well-defined on any points at the boundary of two such triangles, and that $F'_0$ is $\Gamma$-equivariant.

If $\tilde g_1$ is Euclidean away from cone points, this completes the proof. This is similar to, but not exactly, the argument of Bankovic-Leininger.

Now let $\mathcal{T}_<(\tilde \tau)$ be the set of all triangles of $\tilde \tau$ for which there is some point in $Int(T)$ at which the $\tilde g_1$-curvature is strictly negative. To extend $F'_0$ to these triangles, we use the approach of Croke-Fathi-Feldman.  $\mathcal{T}_<(\tilde \tau)$ is certainly $\Gamma$-invariant and $F'$ takes this collection of triangles to $\mathcal{T}_<(F'(\tilde \tau))$. Pick, for each triangle $T$ in $\mathcal{T}_<(\tilde \tau)$ a point $p^*_T \in Int(T)$ at which $\kappa(p^*_T)<0$ in a $\Gamma$-invariant way. Let $V_T \subset T_{p^*_T}\tilde S$ be such that $\exp_{p^*_T}^{\tilde g_1}(V_T) =  \bar T$. This set exists since there are no cone points in the interior of $T$ and the exponential map is injective as we are in nonpositive curvature. Then, following Croke-Fathi-Feldman, we define

\begin{equation*}
	\begin{array}{rrcl}
	 F_T: & (T,\tilde g_1) & \to & (\tilde S,\tilde g_2)  \\
	 & p & \mapsto & (\exp_{F'_0(p_T^*)}^{\tilde g_2})\circ (DF'_0)_{p^*_T}\circ (\exp_{p^*_T}^{\tilde g_1})^{-1}(p). 
	 \end{array}
\end{equation*}

First, we claim that $F_T$ extends $F'_0$, that is, if $q\in \bar T \cap(cone(\tilde g_1) \cup \tilde U_1)$, then $F_T(q) = F'_0(q)$. Let $c(t)$ be the unit-speed $\tilde g_1$-geodesic connecting $p^*_T$ to $q$ with $c(0)=p^*_T$. Then $d_{\tilde g_1}(c(t),q) = d(p^*_T,q)-t$. For sufficiently small $|t|$, $c(t)$ lies in $\tilde U_1$ since $c(0)$ does. Thus, for sufficiently small $|t|$, we can use the fact that $F'_0$ is an isometry and we have

\[ d_{\tilde g_2}(F'_0(c(t)), F'_0(q)) =  d_{\tilde g_2}(F'(p_T^*),F'(q))-t.\]

\noindent This implies that $F'_0(c(t))$ lies along the geodesic segment connecting $F'_0(p^*_T)$ and $F'_0(q)$ for at least small values of $|t|$. In particular, $(DF'_0)_{p^*_T}(\dot c(0))$ is tangent to the geodesic from $F'_0(p^*_T)$ to $F'_0(q)$. Then, using the definition of $F_T$ and the fact that $F'_0$ is an isometry, it is easy to see that $F_T(q) = F'_0(q)$.

Second, $F_T$ must preserve curvature, since we now know it agrees with the isometry $F'_0$ on points of negative curvature and cone points, and all other points have zero curvature.

Third, we claim that $F_T|_{Int(T)}$ is a Riemannian isometry onto its image. At points in $\tilde U_1$ this follows from the fact that it is a (metric) isometry. Let $q$ be a point in $\tilde U_1^c$ and $v\in T_q\tilde S$. If $v$ is tangent to the geodesic connecting $p^*_T$ and $q$, then it is easy to see from the definition of $F_T$ that $|(DF_T)_q(v)|_{\tilde g_2} = |v|_{\tilde g_1}$. Now assume that $v$ is normal to the geodesic from $p^*_T$ to $q$. There is a unique Jacobi field along this geodesic with value 0 at $p^*_T$ and value $v$ at $q$. Since any Jacobi field arises from a variation of geodesics, $DF_T$ takes this Jacobi field to a Jacobi field along the geodesic from $F_T(p^*_T)$ to $F_T(q)$. By the second point above, the curvatures along the geodesic segments $[p^*_T,q]$ and $[F'(p^*_T),F'(q)]$ agree, so by the Jacobi equation, $|(DF_T)_q(v)|_{\tilde g_2} = |v|_{\tilde g_1}$. This proves the claim.

Fourth, we claim that $F_T$ has its image in the geodesic triangle $F'(T)$ from the triangulation $F'(\tilde \tau)$. Since $F_T$ is a Riemannian isometry on its interior, and since the geodesics bounding $T$ can be approached by sequences of geodesics in $Int(T)$, we see that $F_T$ takes $T$ to a geodesic triangle. From the first claim, we know $F_T$ takes the vertices of $T$ to the vertices of $F'(T)$. But there is only one geodesic triangle with these vertices -- namely $F'(T)$. Reversing the roles of $\tilde g_1$ and $\tilde g_2$ shows that $F_T$ is bijective onto its image.

We now let $F = \bigcup_{T\in \tilde \tau} F_T$. It is clear from its construction that $F$ is $\Gamma$-equivariant and extends $F'_0$. The fact that each $F_T$ maps the geodesic edges to the geodesic edges of $F'(T)$ preserving arc-length implies that $F$ is well-defined along the edges of $\tilde \tau$. The fact that $F_T$ is a Riemannian isometry on the open dense subset consisting of the interiors of all the triangles in $\tilde \tau$ and an isometry on the cone point sets, together with the fact that distance for $\tilde g_1$ or $\tilde g_2$ is realized by lengths of shortest paths then implies that $F$ is an isometry easily.

To prove that the map $F$ induces on $(S, g_1) \to (S,g_2)$ is isotopic to the identity, we note that by $\Gamma$-equivariance, the extension of $F$ to $\partial^\infty(\tilde S)$ is the identity, so the map $F$ induces on $\Gamma$ is the identity, proving the result.

This completes the proof of Theorem \ref{main thm}.

\end{proof}

\bibliographystyle{alpha}
\bibliography{biblio}

\end{document}